\newtheorem{theorem}{Theorem}[section]
\newtheorem{corollary}[theorem]{Corollary}
\newtheorem{definition}[theorem]{Definition}
\newtheorem{example}[theorem]{Example}
\newtheorem{lemma}[theorem]{Lemma}
\newtheorem{proposition}[theorem]{Proposition}
\newtheorem{remark}[theorem]{Remark}
\newtheorem{notation}[theorem]{Notation}
\numberwithin{equation}{section}
\newcommand{\secref}[1]{\S\ref{#1}}
\def \Vh0{\stackrel{\circ}{V}_h}
\newcommand{\lc}
{\mathrel{\raise2pt\hbox{${\mathop<\limits_{\raise1pt\hbox
					{\mbox{$\sim$}}}}$}}}
\newcommand{\gc}
{\mathrel{\raise2pt\hbox{${\mathop>\limits_{\raise1pt\hbox{\mbox{$\sim$}}}}$}}}
\newcommand{\ec}
{\mathrel{\raise2pt\hbox{${\mathop=\limits_{\raise1pt\hbox{\mbox{$\sim$}}}}$}}}
\def\bb{\begin{equation}} \def\ee{\end{equation}}
\def\beqn{\begin{eqnarray}}  \def\eqn{\end{eqnarray}}
\def\beqnx{\begin{eqnarray*}} \def\eqnx{\end{eqnarray*}}
\newcommand{\bP}{\mathbb{P}}
\newcommand{\cI}{{\mathcal I}}
\newcommand{\cP}{{\mathcal P}}
\newcommand{\cQ}{{\mathcal Q}}
\newcommand{\cW}{{\mathcal W}}
\newcommand{\ba}{\mathbf{a}}
\newcommand{\ri}{{\rm i}}
\newcommand{\rd}{{\rm d}}
\newcommand{\beq}{\begin{equation}}
	\newcommand{\eeq}{\end{equation}}
\newcommand{\beqs}{\begin{equation*}}
	\newcommand{\eeqs}{\end{equation*}}
\newcommand{\bit}{\begin{itemize}}
	\newcommand{\eit}{\end{itemize}}
\newcommand{\ben}{\begin{enumerate}}
	\newcommand{\een}{\end{enumerate}}
\newcommand{\bal}{\begin{align}}
	\newcommand{\eal}{\end{align}}
\newcommand{\bals}{\begin{align*}}
	\newcommand{\eals}{\end{align*}}
\newcommand{\bse}{\begin{subequations}}
	\newcommand{\ese}{\end{subequations}}
\newcommand{\bpr}{\begin{proposition}}
	\newcommand{\epr}{\end{proposition}}
\newcommand{\bre}{\begin{remark}}
	\newcommand{\ere}{\end{remark}}
\newcommand{\bpf}{\begin{proof}}
	\newcommand{\epf}{\end{proof}}
\newcommand{\ble}{\begin{lemma}}
	\newcommand{\ele}{\end{lemma}}
\newcommand{\bco}{\begin{corollary}}
	\newcommand{\eco}{\end{corollary}}
\newcommand{\bex}{\begin{example}}
	\newcommand{\eex}{\end{example}}
\newcommand{\bth}{\begin{theorem}}
	\newcommand{\enth}{\end{theorem}}
\def\XXint#1#2#3{{\setbox0=\hbox{$#1{#2#3}{\int}$}
		\vcenter{\hbox{$#2#3$}}\kern-.5\wd0}}
\newcommand{\wu}{\widetilde{u}}
\newcommand{\hba}{\widehat{\ba}}
\newcommand{\tba}{\widetilde{\ba}}
\newcommand{\ha}{\widehat{a}}
\newcommand{\ta}{\widetilde{a}}
\newcommand{\hf}{\widehat{f}}
\newcommand{\hg}{\widehat{g}}
\newcommand{\by}{\boldsymbol{y}}
\newcommand{\bR}{\mathbb{R}}
\newcommand{\bell}{\boldsymbol{\ell}}
\newcommand{\bN}{\mathbb{N}}
\definecolor{darkpurple}{RGB}{110,0,180}
\newcommand{\igg}[1]{{\color{red}{#1}}}
\providecommand{\blue}{\textcolor{black}}
\renewcommand\d{\mathrm{d}}
\def\ps@pprintTitle{%
  \let\@oddhead\@empty
  \let\@evenhead\@empty
  \let\@oddfoot\@empty
  \let\@evenfoot\@oddfoot
}
\title{A Filon-Clenshaw-Curtis-Smolyak rule for multi-dimensional\\ oscillatory integrals
  with  application \\ to a UQ problem for the Helmholtz equation\\
}
  \author{Zhizhang Wu$^\dagger$,  Ivan G. Graham$^*$, Dingjiong Ma$^\dagger$,  Zhiwen Zhang$^\dagger$\\[2ex]
  $^\dagger$ Department of Mathematics, The University of Hong Kong, \\Pokfulam Road, Hong Kong SAR, China.
  \\ \tt wuzz@hku.hk, martin35@connect.hku.hk,  zhangzw@hku.hk \\[2ex]
$^*$ Department of Mathematical Sciences, University of Bath, Bath BA2 7AY, UK.  \\{\tt I.G.Graham@bath.ac.uk}
}
\date{\today}
\begin{document}

\maketitle

\begin{abstract}
\noindent
In this paper, we combine the Smolyak technique for multi-dimensional interpolation with the Filon-Clenshaw-Curtis
(FCC) rule for one-dimensional oscillatory integration,  to obtain a new Filon-Clenshaw-Curtis-Smolyak (FCCS) rule for oscillatory integrals with linear phase  over the $d-$dimensional cube $[-1,1]^d$.  By combining stability and convergence estimates for the FCC rule with error estimates for the Smolyak interpolation operator, we obtain an error estimate for the FCCS rule, consisting of the product of a Smolyak-type error estimate multiplied by a term that decreases with $\mathcal{O}(k^{-\widetilde{d}})$, where $k$ is the wavenumber and $\widetilde{d}$ is the number of oscillatory dimensions. If all dimensions are oscillatory, a  higher negative power of $k$ appears in the estimate. As an application, we consider the forward problem of uncertainty quantification (UQ) for a  one-space-dimensional Helmholtz problem with wavenumber $k$ and a random heterogeneous refractive index, depending  in an affine way on $d$ i.i.d. uniform random variables. After applying a classical  hybrid numerical-asymptotic approximation, expectations of  functionals  of the solution of this problem can   be formulated as a sum of oscillatory integrals over  $[-1,1]^d$, which we compute  using the FCCS rule. We give numerical results for the FCCS rule which illustrate its  theoretical properties  and show that    the accuracy of the UQ algorithm improves when  both $k$ and the order of the FCCS rule increase. We also give results for both the quadrature and UQ problems when the underlying  FCCS rule uses a  dimension-adaptive Smolyak interpolation. These show increasing accuracy for the UQ problem as both the adaptive tolerance decreases and $k$ increases, requiring  very modest increase in work as the stochastic dimension increases, for a case when the affine expansion in random variables decays quickly.

\noindent \textit{\textbf{AMS subject classification:}} 35C20, 42B20, 65D30, 65D32, 65D40.\\
\noindent \textit{\textbf{Keywords:}} \  oscillatory integrals, high dimension, Helmholtz equation, uncertainty quantification, hybrid numerical asymptotic methods.
\end{abstract}





\section{Introduction} \label{sec:intro}


\noindent
In this paper, we formulate and analyse a novel numerical method for computing the multi-dimensional oscillatory integral
\begin{align}
	{\cI^{k,d,\ba}} f : =  \int_{[-1,1]^d} f( \by) \exp(\ri  k \ba\cdot \by ) \rd \by,
	\label{highDoscIntegralEq}
\end{align}
where $k>0$  is a parameter, which may be large, and $\ba=(a_1,...,a_d)^{\top} \in \mathbb{R}^d$ is a fixed vector. As an application of this,   we solve an uncertainty quantification problem for the Helmholtz equation (modelling frequency-domain linear waves),  via  a hybrid numerical asymptotic method, yielding  increasing accuracy as the frequency  increases.

\paragraph{Background} The computation of oscillatory integrals is a classical problem in
applied mathematics (e.g., \cite{Wong:2001}),  which has   enjoyed considerable recent interest. By combining  numerical and  asymptotic techniques,  research has focussed on providing methods  which work well  for moderate  $k$,  but  remain accurate (or even improve in accuracy) as the parameter $k$ (proportional to frequency) gets \blue{large.}   While there has been strong interest in this topic \blue{ in the recent past} (partly driven by applications in high-frequency scattering e.g., \cite{GrahamSpence:2012, Groth:2018,Gibbs:2020}), most methods proposed in this context are appropriate only
for relatively low-dimensional oscillatory integration problems.

On the other hand there is a considerable literature on multi-dimensional integration for the non-oscillatory version of \eqref{highDoscIntegralEq}, where $k$ is small - here we mention just  \cite{GeGr:98,NoRi:99,BaNoRi:00,DiKuSl:13,NoTaTe:16,ZeSc:20} as exemplars of the huge literature on this topic. However, since the accuracy of these rules depends on the derivatives of the integrand, their direct application  to the whole integrand in \eqref{highDoscIntegralEq} will incur an error which in general will  blow up strongly with increasing $k$.

Quite a large portion of research on  oscillatory integration in the {low-dimensional} case (mostly $d=1$) is concerned with  Filon-type methods. In  \cite{iserles2004numerical,iserles2005numerical,IserlesNorsett:2004,IserlesNorsett:2006,xiang2007efficient} the analysis  concentrates on accelerating the convergence as $k \rightarrow \infty$.

The basic 1D method central to the current paper  is
\cite{GrahamDominguez:2011}, which proves stability and algebraic convergence
\blue{(with respect to the number of function evaluations) for the Filon-Clenshaw-Curtis  rule, explicit in the regularity required of $f$. The convergence is superalgebraic if $f \in C^\infty$ and the error estimate
features a negative power of $k$ as $k \rightarrow \infty$.}  The range of application of this approach was considerably extended  in recent work \cite{MaIsPe:22}.
Extensions of Filon methods to $hp$ approximation and the handling of nonlinear phase functions (again in 1D) are given in
\cite{melenk2010convergence,GrahamDominguez:2013,MaIsPe:22,Ma:21a,Ma:21}.

\blue{A complexity study of quadrature rules for oscillatory integration in one dimension has been carried out
  in \cite{NoUlWo:15}, with a recent review in \cite{No:23}. We comment on the relation of these results to ours in Remark \ref{rem:Novak} later in the paper.} 

To extend  the  approach of \cite{GrahamDominguez:2011}  to the higher dimensional case,  the factor $f(\by)$ in \eqref{highDoscIntegralEq} should be approximated by some linear combination of simple basis functions, with coefficients which can be computed easily from $f$, and then this approximation should be  integrated analytically against the oscillatory factor $\exp(\ri k   \ba\cdot \by)$. There are relatively  few papers on the higher dimensional oscillatory case. Exceptions are \cite{IsNo:06, HuVa:07} \blue{(see also \cite{DeHuIs:18})} which include discussion of  generalization of a  Filon-type method to problem  \eqref{highDoscIntegralEq}, making use of function values and derivatives at vertices of the boundary and proving increasing accuracy as  $k \rightarrow \infty$, but without explicit error estimates showing  how the error depends on the number of function evaluations,   the regularity of $f$ or the dimension $d$.

\paragraph{Overview of Algorithm} Our method for  \eqref{highDoscIntegralEq} essentially extends the  1D  `Filon-Clenshaw-Curtis' approach to the multidimensional case by {applying Smolyak-type interpolation to the non-oscillatory part of the  integrand in  \eqref{highDoscIntegralEq}. Since  we shall  allow $\ba$ to have entries of either sign and possibly small, we introduce the following notation in order  to identify the oscillatory and non-oscillatory dimensions in  \eqref{highDoscIntegralEq}.
   \begin{notation}\label{not:tilde}
    For $a \in \mathbb{R}$ and $k> 0$ we define
    \begin{align*} \ta = \left\{ \begin{array}{ll} a \quad & \text{if } \quad k \vert a \vert \geq 1\\
                                   0 \quad & \text{otherwise} \end{array} \right. \end{align*}
                             and we set  $\ha = a - \ta$.  (We note that $\ta$ and $\ha$ depend on $k$ as well as $a$.)  
                           \end{notation}
                       Applying Notation \ref{not:tilde}
                       to each component of the vector $\ba$ in \eqref{highDoscIntegralEq}
                         \blue{we obtain the decomposition   $k\ba =  k\hba + k\tba   $, where each component of $k \hba$ has modulus bounded above by 1.   and thus  $\hf (\by) := f(\by) \exp( \ri k \hba \cdot \by)$                                                                           
                       is the non-oscillatory part of the integrand in \eqref{highDoscIntegralEq}. }
                       We then  rewrite \eqref{highDoscIntegralEq}  as
                           \begin{align}
	\blue{{\cI^{k,d,\ba}} f  =}  \int_{[-1,1]^d} \hf( \by) \exp(\ri  k \tba \cdot \by ) \rd \by.
	   \label{highDoscIntegralEq1}
                           \end{align}

                                           Our quadrature rule for \eqref{highDoscIntegralEq1} (and hence \eqref{highDoscIntegralEq})  is then defined by replacing    the  factor $\hf$
                                           by its classical  Smolyak polynomial interpolant $\mathcal{Q}^{r,d}\hf$ of maximum level $r$ (formally defined in \eqref{FCCSmolyak-dD}).
                                           This sparse grid interpolant employs
                 separable polynomial interpolation at points on  sparse grids generated by
                                             a nested sequence of  1-d grids. Here we use, at level $\ell$, the points:
                                             \begin{align} \label{eq:grids} \{ 0 \} \ \text{for} \ \ell = 1,  \quad
    \text{and} \ \left\{
         t_{j,\ell} := \cos\left(\frac{j\pi}{n_\ell}\right) \right\}_{j=0}^{n_\ell} \quad \text{for} \  \ell \ge 2, \quad \text{where} \ n_\ell = 2^{\ell-1},  \end{align}
i.e.  the mid-point rule is used at level $\ell = 1$  and $2^{\ell-1} +1$  Clenshaw-Curtis points are used at level $\ell \geq 2$.
Using this,  we  approximate \eqref{highDoscIntegralEq1}  by
\begin{align}
\mathcal{I}^{k,d,\ba,r} f  := \int_{[-1,1]^d}(\mathcal{Q}^{r,d}\hf)(\by)\exp(\ri  k \tba \cdot  \by  ) \rd \by.
\label{FCCSmolyak_intro}
\end{align}
This is a $d$-dimensional version of the 1D Filon-Clenshaw-Curtis (FCC) rule from   \cite{GrahamDominguez:2011}.

Since, in each dimension the interpolant on the Clenshaw-Curtis grid can be written as a
linear combination of Chebyshev polynomials of the first kind  of degree $n$ (here denoted by $T_n$), the integral \eqref{FCCSmolyak_intro} can be computed  exactly, provided  the $k-$dependent `weights'
\begin{align} \label{weights}
W_n(ka_j) := \int_{-1}^{1}T_n(y) \exp(\ri k a_j y) dy, \quad n = 0, \ldots , n_{\ell},            \quad \ell \geq 2 ,  \quad j = 1, \ldots d
\end{align}
are known. A stable algorithm for computing these weights (for any $k$, $a_j$  and $n$), and its analysis,  are
given in \cite{GrahamDominguez:2011}. (The weight for the case $\ell = 1$ is trivial to compute.)  Since the cost of computing the weights for an $M+1$ point rule in 1-d and with a fixed choice of $k$ and $a_j$ has complexity $\mathcal{O}(M\, \log M)$ (see \cite[Remark 5.4]{GrahamDominguez:2011}), and since the weights for each dimension can be computed independently, the cost of computing the weights \eqref{weights} (on a serial computer) grows with  $\mathcal{O}(d \, 2^{r-1} \log (2^{r-1}))$ as dimension $d$ or the maximum level $r$ increases.  Weight computation in each dimension could be done in parallel.


\paragraph{Main  results of the paper}
In order to prove an error estimate, we assume that
\begin{align}
f \in \mathcal{W}^{p,d} := \left\{ f:[-1,1]^d\rightarrow \mathbb{R}: { \frac{\partial^{|\textbf{s}|}f}{\partial \by^{\mathbf{s}}} \in C([-1, 1]^d) }, \text{ for all } | \mathbf{s} |_{\infty} \leq p \right\}
\label{FunctionSpaceSetting}
\end{align}
for some positive integer $p$, where $\Vert \cdot \Vert_\infty $ denotes the uniform norm over $[-1,1]$,
$\textbf{s}=(s_1,...,s_d) \in \mathbb{N}_0^d$ are the multi-indices of order $|\textbf{s}| = s_1 + \cdots + s_d$,
$$
\frac{\partial^{|\textbf{s}|}f}{\partial \by^{\mathbf{s}}} = \frac{\partial^{|\textbf{s}|}f}{\partial y_1^{s_1}\cdots\partial y_d^{s_d}},
$$
and  $| \mathbf{s} |_{\infty} = \max \limits_{1 \le i \le d} s_i$.
\blue{Thus $\mathcal{W}^{p,d}$ is here defined as the space of $d-$variate functions on $[-1,1]^d$, whose mixed partial derivatives of up to order $p$ in each dimension are continuous.} \, We introduce the norm on $\mathcal{W}^{p,d}$:
\begin{align}
  \| f \|_{\mathcal{W}^{p,d}} := \max_{\mathbf{s} \in \mathbb{N}_0^d \, : \, | \mathbf{s} |_{\infty} \le p  }
  \left\| \frac{\partial^{|\textbf{s}|}f}{\partial \by^{\mathbf{s}}} \right\|_{\infty, [-1,1]^d}.  
\end{align}
We note that $\cW^{p,1}$ is just the usual space $C^p[-1,1]$ with norm given by $$\Vert f \Vert_{\cW^{p,1}} =
\max \{ \Vert f^{(j)}\Vert_{\infty, [-1,1]}, \quad j=0,\ldots , p \} . $$
By combining the properties of the Smolyak algorithm and the FCC rule, together with the regularity
assumption \eqref{FunctionSpaceSetting}, we prove in \S \ref{sec:Error_Analysis} the following error estimates.

\begin{theorem}\label{thm:main_grid}
For each  $p\geq 1$ and $d \geq 1$ there is a constant $C_{d,p}>0$ such that,  for all $\ba \in \mathbb{R}^d$, $k > 0$,  and
  $r$ sufficiently large,  we have
  \begin{align}
& \vert \cI^{k,d,\ba} f - \cI^{k,d,\ba,r} f \vert \ \nonumber \\
&\mbox{\hspace{1cm}}  \leq C_{d,p}  \left(\prod_{\stackrel{j \in \{ 1, \ldots, d \}}{k \vert a_j\vert \geq 1 }}
      k \vert a_j \vert   \right)^{-1}
    \left(  \log^{d-1}N(r, d) \right)^{p}  \,
    \left(\frac{1}{N(r, d)}\right)^{p - 1}   \Vert f \Vert_{\cW^{p,d}}, \label{mainalt}
  \end{align}
where $N(r,d)$ is the number of point evaluations of  $f$ used in the quadrature rule  \eqref{FCCSmolyak_intro}.
 \end{theorem}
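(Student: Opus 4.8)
The plan is to write the quadrature error as the exact oscillatory integral of the Smolyak interpolation error and then to exploit the tensor-product structure of both the Smolyak operator and the separable weight $\exp(\ri k \tba\cdot\by) = \prod_{j=1}^d \exp(\ri k \tilde a_j y_j)$. Subtracting \eqref{FCCSmolyak_intro} from \eqref{highDoscIntegralEq1},
\[
\cI^{k,d,\ba}f - \cI^{k,d,\ba,r}f = \int_{[-1,1]^d}\bigl(\hf - \cQ^{r,d}\hf\bigr)(\by)\,\exp(\ri k \tba\cdot\by)\,\rd\by,
\]
so everything reduces to estimating the oscillatory integral of the residual $(\mathrm{Id}-\cQ^{r,d})\hf$.

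A preparatory step is to control the regularity of $\hf$ by that of $f$, uniformly in $k$. By Notation \ref{not:tilde} every component of $\hba$ satisfies $k\vert\hat a_j\vert < 1$, so each $y_j$-derivative of $\exp(\ri k \hba\cdot\by)$ brings down a factor $k\hat a_j$ of modulus less than $1$; the Leibniz rule then gives $\Vert\hf\Vert_{\cW^{p,d}} \le C_{d,p}\Vert f\Vert_{\cW^{p,d}}$ with $C_{d,p}$ independent of $k$. It therefore suffices to prove the bound with $\Vert f\Vert_{\cW^{p,d}}$ replaced by $\Vert\hf\Vert_{\cW^{p,d}}$.

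The core of the argument is the hierarchical representation of the Smolyak operator. Writing $I_\ell$ for the $1$-d Clenshaw--Curtis interpolant on the grid \eqref{eq:grids}, $\Delta_\ell := I_\ell - I_{\ell-1}$ with $I_0 := 0$, and $\Phi_j[g] := \int_{-1}^1 g\,\exp(\ri k\tilde a_j y_j)\,\rd y_j$ for the $1$-d oscillatory integration in variable $j$, the definition \eqref{FCCSmolyak-dD} expresses the residual operator as a sum of tensor-product differences over the complement of the Smolyak index set, so that
\[
\cI^{k,d,\ba}f - \cI^{k,d,\ba,r}f = \sum_{\vert\bell\vert_1 > r+d-1}\Bigl(\bigotimes_{j=1}^d \Phi_j\Delta_{\ell_j}\Bigr)\hf .
\]
I would then bound each factor $\Phi_j\Delta_{\ell_j}$ using the $1$-d FCC stability and convergence estimates of \cite{GrahamDominguez:2011}. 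The decisive point is that these estimates factorize: for an oscillatory dimension ($k\vert a_j\vert\ge1$) one writes $\Phi_j\Delta_{\ell_j} = \Phi_j(\mathrm{Id}-I_{\ell_j-1}) - \Phi_j(\mathrm{Id}-I_{\ell_j})$ as a difference of two FCC quadrature errors, each carrying the oscillatory decay $(k\vert a_j\vert)^{-1}$ times a level factor $\sim n_{\ell_j}^{-(p-1)}$ that is independent of $k$ and $a_j$; for a non-oscillatory dimension ($\tilde a_j = 0$) the weight is trivial and one has the ordinary Clenshaw--Curtis difference bound $\sim n_{\ell_j}^{-p}$ up to a Lebesgue-constant factor $\sim \ell_j$. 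Since the oscillatory prefactor $(k\vert a_j\vert)^{-1}$ does not depend on $\ell_j$, it pulls out of the sum, producing exactly the product $\bigl(\prod_{k\vert a_j\vert\ge1} k\vert a_j\vert\bigr)^{-1}$ in \eqref{mainalt}.

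Finally, the residual sum $\sum_{\vert\bell\vert_1 > r+d-1}\prod_j(\text{level factors})$ is a standard sparse-grid tail which, for $r$ sufficiently large, sums to $\mathcal{O}\bigl((\log^{d-1}N(r,d))^p\,N(r,d)^{-(p-1)}\bigr)$ once the level $r$ is translated into the point count $N(r,d)$ (which grows like $2^{r}$ up to a factor polynomial in $r$, so $\log N(r,d)\sim r$); the exponent $p-1$ rather than $p$ is forced by the oscillatory dimensions, whose FCC error trades one power of $n_{\ell_j}$ for the integration-by-parts gain $(k\vert a_j\vert)^{-1}$. I expect the main obstacle to be the bookkeeping that chains the one-dimensional functionals $\Phi_j\Delta_{\ell_j}$ across the non-separable integrand $\hf$: the $1$-d bounds must be applied variable-by-variable with the other variables frozen, and it is precisely the mixed-derivative norm $\cW^{p,d}$ (with $\vert\mathbf s\vert_\infty\le p$) that makes the tensor-product estimate $\bigl\vert(\bigotimes_j \Phi_j\Delta_{\ell_j})\hf\bigr\vert \le \prod_j(\text{$1$-d bound})\,\Vert\hf\Vert_{\cW^{p,d}}$ valid. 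The subsidiary difficulty is to verify that the factorized $1$-d FCC estimates hold uniformly across all levels and all $k,a_j$, so that the oscillatory product genuinely separates from the sparse-grid sum.
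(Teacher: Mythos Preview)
Your approach is correct but is organised differently from the paper's. You represent the residual directly as a tail sum over the complement $\{\vert\bell\vert_1 > r+d-1\}$ of the Smolyak index set and bound each summand as a product of one-dimensional FCC difference estimates. The paper instead proves an intermediate estimate (Theorem~\ref{thm:main}) by induction on the dimension $d$, following the Wasilkowski--Wo\'zniakowski recursion $\cQ^{r,d+1}=\sum_{\bell^*}(\bigotimes_j D^{\ell_j^*})\otimes Q^{r+d-\vert\bell^*\vert}$: the error is split as $T_1+T_2$, where $T_1$ invokes the $d$-dimensional hypothesis on the exactly integrated $(d{+}1)$st variable and $T_2$ carries the remainder from the recursion, and the two pieces recombine via the identity $\binom{q}{d}+\binom{q}{d-1}=\binom{q+1}{d}$ to produce the factor $\binom{r+d-1}{d-1}2^{-(r-d)(p-1)}$, which is then converted to the $N(r,d)$ form through the M\"uller-Gronbach node-count asymptotics. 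Both routes rest on the same one-dimensional building block---the uniform-in-$\ell$ bound for $\int_{-1}^1(D^\ell\hat g)\exp(\ri k\tilde a y)\,\rd y$ (Lemma~\ref{lem:1}(ii)), which already packages the $\min\{1,\vert\omega\vert^{-1}\}$ factor and handles oscillatory and non-oscillatory coordinates uniformly; the paper in fact proves exactly the tensor bound you need as \eqref{induction}. Your direct tail-sum is arguably more transparent, but you must justify convergence of the infinite hierarchical expansion (which follows from summability of your level factors for $p>1$ together with uniform convergence $\cQ^{q,d}\hat f\to\hat f$); the paper's induction avoids any infinite sum at the cost of the $T_1/T_2$ bookkeeping.
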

While this result is explicit in $k$ and $ \ba$, we can obtain the following better estimate, for sufficiently  large  $k$, although this is implicit in its dependence on $\ba$ and requires more regularity on $f$.

\begin{theorem}\label{thm:main_grid1}
  Suppose $\ba \in \mathbb{R}^d$ with $a_j \not = 0$ for each $j$. Then, for each  $p\geq 1$ and  $d \geq 1$,   there is a constant $C_{d,p,\ba}>0$ such that, for \blue{$r$ sufficently large}, we have
  \begin{align}
 \vert \cI^{k,d,\ba} f - \cI^{k,d,\ba,r} f \vert \
   \  \leq\  C_{d,p, \ba}  \, k^{-(d + 1)} \,
    \left(  \log^{d-1}N(r, d) \right)^{p}  \,
    \left(\frac{1}{N(r, d)}\right)^{p - 1}   \Vert f \Vert_{\cW^{{p + 3},d}}, \label{mainalt1}
  \end{align}
\end{theorem}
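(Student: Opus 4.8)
The plan is to reduce to the fully-oscillatory regime and then gain one power of $k$ over Theorem~\ref{thm:main_grid} by exploiting that, for $r\ge d+1$, the Smolyak interpolant reproduces $f$ at the vertices of $[-1,1]^d$. First I would dispose of small $k$: since $a_j\neq 0$ for every $j$, once $k\ge(\min_j|a_j|)^{-1}$ Notation~\ref{not:tilde} gives $\tba=\ba$, $\hba=0$, hence $\hf=f$; for $k$ below this $\ba$-dependent threshold $k^{-(d+1)}$ is bounded below while the left-hand side is controlled by Theorem~\ref{thm:main_grid}, so the claim follows after enlarging $C_{d,p,\ba}$. It therefore suffices to treat $k\ge(\min_j|a_j|)^{-1}$, where
\[
\cI^{k,d,\ba}f - \cI^{k,d,\ba,r}f = \int_{[-1,1]^d} g(\by)\,\exp(\ri k\,\ba\cdot\by)\,\rd\by, \qquad g := f - \cQ^{r,d}f .
\]

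The structural heart of the argument is that for $r\ge d+1$ the sparse grid contains all $2^d$ vertices of the cube: placing $\pm1$ in every coordinate requires the tensor term with multi-index $\bell=(2,\dots,2)$ (level $1$ supplies only the node $0$), and $\|\bell\|_1=2d$ is admitted precisely when $2d\le r+d-1$. Since the one-dimensional grids \eqref{eq:grids} are nested and interpolatory, $\cQ^{r,d}$ interpolates $f$ at every grid point, so $g$ vanishes at all vertices. I would then show, by induction on $d$, that vertex-vanishing upgrades the generic $\mathcal{O}(k^{-d})$ decay to $\mathcal{O}(k^{-(d+1)})$. For $d=1$, $g(\pm1)=0$ kills the leading boundary term, and a second integration by parts gives $\int_{-1}^{1} g\,e^{\ri kay}\,\rd y=(\ri ka)^{-2}\big(\int_{-1}^{1} g''\,e^{\ri kay}\,\rd y-[\,g'e^{\ri kay}\,]_{-1}^{1}\big)=\mathcal{O}(k^{-2})$. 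For the step, integrate by parts once in $y_1$, splitting the integral into (i) a boundary contribution $(\ri ka_1)^{-1}$ times $(d-1)$-dimensional oscillatory integrals of $g(\pm1,\cdot)$, which still vanish at the vertices of their faces so that the induction hypothesis makes them $\mathcal{O}(k^{-d})$; and (ii) a volume term $-(\ri ka_1)^{-1}\int_{[-1,1]^d}\partial_{1}g\,e^{\ri k\ba\cdot\by}\,\rd\by$, which is $\mathcal{O}(k^{-(d+1)})$ because the remaining $d$-dimensional oscillatory integral of the smooth function $\partial_1 g$ decays like $k^{-d}$ (iterated integration by parts, all $a_j\neq0$, as underlies Theorem~\ref{thm:main_grid}).

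To turn these orders into the stated bound I would track the interpolation error through the recursion. The only derivatives of $g=f-\cQ^{r,d}f$ that arise are mixed derivatives $\partial^{\mathbf{s}}g$ with $|\mathbf{s}|_\infty\le 2$: the single-direction second derivative comes from the two integrations by parts in the base case, and the $y_1$-order in the volume term (ii) rises to $2$ while all other orders stay $\le 1$. Each such quantity must be estimated at the Smolyak rate $(\log^{d-1}N(r,d))^{p}(1/N(r,d))^{p-1}$, which is where a differentiated interpolation-error bound of the form $\|\partial^{\mathbf{s}}(f-\cQ^{r,d}f)\|_{\infty}\lesssim(\log^{d-1}N(r,d))^{p}(1/N(r,d))^{p-1}\,\|f\|_{\cW^{p+|\mathbf{s}|_\infty+1,d}}$ enters; with $|\mathbf{s}|_\infty\le 2$ this produces exactly the norm $\|f\|_{\cW^{p+3,d}}$.

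I expect the main obstacle to be precisely this differentiated, rate-preserving interpolation estimate: one must bound derivatives of the Smolyak error without degrading the $N$-rate $(1/N)^{p-1}$ or inflating the power of $\log N$ beyond $p$, and then propagate it through the dimensional induction — in particular controlling the face restrictions $g(\pm1,\cdot)$ by $(d-1)$-dimensional Smolyak errors with matching logarithmic factors. The vertex-interpolation identity and the integrations by parts are comparatively routine; the delicate part is the uniform, $\ba$-independent bookkeeping of derivative orders and logarithmic powers that yields the clean $k^{-(d+1)}(\log^{d-1}N(r,d))^{p}(1/N(r,d))^{p-1}\|f\|_{\cW^{p+3,d}}$ form.
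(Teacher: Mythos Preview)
Your approach is genuinely different from the paper's, and the obstacle you flag at the end is more than bookkeeping. The paper never differentiates the Smolyak interpolant and never invokes vertex interpolation. Instead it proves an intermediate estimate (Theorem~\ref{thm:main_expk}) by re-running the dimensional induction of Theorem~\ref{thm:main} with the sharper one-dimensional FCC bound: Theorem~\ref{prop:Errorestimate1dFCC} with $s=2$ (and the corresponding estimate~\eqref{1dgeq3} in Lemma~\ref{lem:1}) already yields $|\omega|^{-2}$ decay for the 1D Filon error, at the cost of three extra derivatives of $g$. The condition $r\ge d+1$ is used not to place vertices in the grid but to guarantee, in the $T_2$ term of the induction (see~\eqref{E0}), that every multi-index $\bell^*$ falls into one of two cases: either $r+d-|\bell^*|\ge 2$, so Remark~\ref{rem:highernegative}(i) upgrades~\eqref{E4} to $|\omega_{d+1}|^{-2}$; or $|\bell^*|=r+d-1\ge 2d+1$, forcing some $\ell_j^*\ge 3$, so Remark~\ref{rem:highernegative}(ii) upgrades~\eqref{induction} to $|\omega_j|^{-2}$ in that coordinate. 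One extra negative power of $k$ in a single coordinate is all that is needed, and the conversion to $N(r,d)$ is then identical to that for Theorem~\ref{thm:main_grid}.

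Your route has a real gap. The estimate $\|\partial^{\mathbf s}(f-\cQ^{r,d}f)\|_\infty\lesssim(\log^{d-1}N)^p(1/N)^{p-1}\|f\|_{\cW^{p+|\mathbf s|_\infty+1,d}}$ that you need is not a standard Smolyak result: differentiating a polynomial interpolant of degree $n_\ell$ can cost powers of $n_\ell$ via Markov-type inequalities, and it is far from clear that these can be absorbed across the Smolyak combination without degrading the $N$-exponent or inflating the $\log N$ power beyond $p$. Separately, your induction hypothesis as written is about FCCS quadrature errors, but the face restrictions $g(\pm1,\cdot)$ are not of the form $h-\cQ^{r',d-1}h$ for any $(d{-}1)$-dimensional Smolyak interpolant; to make the recursion close you would have to recast the induction over oscillatory integrals of arbitrary vertex-vanishing functions bounded in a $\cW^{2,d}$-type norm, and then supply the differentiated Smolyak bound as a separate lemma. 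The paper's argument sidesteps both issues entirely by keeping every gain in $k$ inside the one-dimensional FCC error estimates, where it is already proved.
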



The third and fourth terms on the right-hand sides of \eqref{mainalt} and \eqref{mainalt1}  are standard in the analysis of Smolyak-type approximation methods, having a power of $\log N$ which grows with dimension. While this is good for moderate (but not very high) dimension $d$, our estimates \eqref{mainalt}, \eqref{mainalt1}  also contain  additional factors containing potentially  high negative powers of $k$, giving a substantial advantage over traditional tensor product rules for these integrals: If  a $d-$dimensional tensor product of the standard one-dimensional  (non-Filon) Clenshaw-Curtis rule were used to approximate  \eqref{highDoscIntegralEq} then (for fixed $p$ and $d$),  the standard error estimate would give  $\mathcal{O}(k^{p}\,  N^{-p/d})$
as $k,N \rightarrow \infty$  (where $N$ is the total number of points used - see Example \ref{prop:bad} for a precise statement). This is   vastly inferior to \eqref{mainalt}, \eqref{mainalt1} when either $d$ or $k$ is large. The estimate   \eqref{mainalt} can be seen as a generalisation of the concept of {\em universality} discussed in \cite[eqn. (7)]{NoRi:99}  to the case of oscillatory integrals.

\blue{When computing oscillatory integrals,  one should always bear in mind that the integral itself is usually  decaying in modulus as $k$ increases. In 1d,   simple arguments (see, e.g., Remark \ref{rem:Novak})
  show that  $\Vert \cI^{k,1,a} \Vert_{\cW^{p,1}} \sim k^{-1}$,  provided $a \not = 0$ and this argument can be extended to show that  $\Vert \cI^{k,d,\ba} \Vert_{\cW^{p,d}} \sim k^{-\widetilde{d}}$,  
  where $\widetilde{d}$ is the number of oscillatory dimensions (i.e., the number of components $a_j$ of $\ba$ such that $k |a_j| \ge 1$). For this reason,  it is natural to also consider the  {\em normalized error}, which, for simplicity, we define here as}
  \begin{align} \label{norm_err} \blue{k^{\widetilde{d}} \,  \vert \cI^{k,d,\ba} f - \cI^{k,d,\ba,r} f \vert} \ ,
  \end{align}
\blue{  (although a slightly different definition of this concept is used in complexity theory -- see Remark \ref{rem:Novak}).} \blue{Then Theorem \ref{thm:main_grid}  shows that the absolute error of the FCCS rule decays with at least  the same order $\mathcal{O}(k^{-\widetilde{d}})$  as $\cI^{k,d,a} f$ itself (as $k \rightarrow \infty$), whereas
  Theorem \ref{thm:main_grid1}   essentially shows that, under certain mild conditions, the
  \blue{normalized error} of the FCCS rule actually decays with at least order $\mathcal{O}(k^{-1})$ as  $k \rightarrow \infty$.
}


\blue{It may be noticed that the estimate \eqref{mainalt} contains the factor $N(r,d)^{1-p}$, for functions with $p$ mixed derivatives,  whereas in the classical  non-oscillatory case (e.g.,   \cite{NoRi:99}), this is replaced by the better factor $N(r,d)^{-p}$.  Since we consider here  the oscillatory case,  it is important to have negative powers of $k$ in the estimate,  and for this we pay with one less
  negative power of $N(r,d)$. Nevertheless we also point out that the following  alternative to \eqref{mainalt} }
    \begin{align}
\blue{ \vert \cI^{k,d,\ba} f - \cI^{k,d,\ba,r} f \vert \   \leq C_{d,p}  
    \left(  \log^{d-1}N(r, d) \right)^{p+1}  \,
    \left(\frac{1}{N(r, d)}\right)^{p}   \Vert f \Vert_{\cW^{p,d}},  \label{mainalt2}}
  \end{align}
  \blue{valid for all $k$, can also be proved, thus  recovering the usual power of $N(r,d)$,
    but losing decay with respect to $k$.  Theorems \ref{thm:main_grid},  and \ref{thm:main_grid1} will be proved in \S \ref{sec:Error_Analysis}, and there we shall also give a sketch of the proof of  \eqref{mainalt2}.}

In proving \eqref{mainalt}, \eqref{mainalt1} we make  no special assumption concerning the decay of the derivatives of $f$ with respect to increasing dimension. Numerical experiments for the non-oscillatory case (e.g.,  \cite{GeGr:03,nobile2016adaptive}) have shown that  if a suitable decay rate is present, then dimension-adaptive algorithms will give better   results as $d$ increases. Theory underpinning the idea of the dimension-adaptive algorithms is given in \cite{chkifa2014high,schillings2013sparse}. In \S \ref{subsec:quad} we give computations using both a standard and a dimension-adaptive method (the latter adapted from the algorithm in \cite{nobile2016adaptive})  for the oscillatory quadrature problem. These  demonstrate the theoretical properties of the former in general and the efficiency of the latter in cases where the importance of the dimensions is decaying.

As an application of our FCCS rule, we consider the UQ   problem for the one-space-dimensional Helmholtz boundary-value problem:
\begin{align}
Lu(x)&:=u''(x)+k^2n^2(x)u(x)=F(x), \quad 0<x<1    \label{Helmholtz_Eq_2}\\
B_Lu(x)&:=u(0)=u_L,                         \label{Helmholtz_BC_1}\\
B_Ru(x)&:=u'(1)-ikn_{\infty}u(1)=0,         \label{Helmholtz_BC_2}
\end{align}
where $k$ is the wave number, $u_L$ and $n_{\infty}>0$ are constants, $ F $ is a smooth function and $n$ is the (generally variable) refractive index, here assumed  sufficiently smooth \blue{and uniformly bounded  above and below by positive numbers.}
The boundary conditions model a sound-soft scattering boundary at $x=0$ and a simple absorbing boundary condition at $x = 1$. In our UQ problem,
 $n$  is assumed to be a random field of the form
\begin{align} \label{eq: temp_par}
n(x,\by)=n_0(x) + \sum_{j=1}^{d}n_j(x)y_j,
\end{align}
where     $\by\in [-1,1]^d$ are uniform i.i.d. random variables and the quantity of interest (QoI) is a linear   functional (with respect to the $x$ variable) of the solution $u$. Although 1-d in space, this problem still has some considerable difficulties for large $k$, because the solution $u(x, \by)$ suffers oscillations with respect not only to the spatial variable $x$ but also to the random variable $\by$ as $k\rightarrow \infty$, and the latter phenomenon poses  considerable difficulty for UQ algorithms at high wavenumber. The structure of these oscillations  is explained in more detail in \S \ref{subsec:random}.

However (because we are in 1d in space), for each fixed choice of $\by$, the resulting deterministic Helmholtz problem can be solved (with accuracy up to any negative order in  $k$) using an  asymptotic method (with some numerical approximation),  originally proposed by Aziz, Kellogg and Stephens \cite{aziz1988two},  in which the work involved is independent of $k$. 
     This provides us with an asymptotic ansatz for the solution of the random Helmholtz problem, and (it turns out that) the expected value of the QoI 
     then can be expressed as a  sum of oscillatory and non-oscillatory  integrals of the form \eqref{highDoscIntegralEq} with respect to the random variables, which  we can compute with $k-$independent accuracy using  our FCCS rule. 

In \S \ref{subsec: numerical examples of UQ problem}   we give numerical results for the Helmholtz UQ problem, using the the numerical-asymptotic approximation for the Helmholtz problem and comparing the standard and dimension-adaptive methods for the multi-dimensional integrals. In the dimension-adaptive case,  for  an example where the expansion \eqref{eq: temp_par} decays exponentially,  we observe increasing accuracy of the results as both the adaptive tolerance decreases and the wave number $k$ increases, with very modest growth in complexity with increasing  dimension.

  This is significant, since it is known that for Helmholtz problems in any space dimension,
  derivatives with respect to the random parameters $\by$ of the solution
   blow up as  $k$ increases, thus enforcing strong  constraints on UQ methods in general at high wavenumber. For example, in    \cite{GaKuSl:21},  conditions ensuring  convergence of first order randomized Quasi-Monte Carlo methods for a Helmholtz problem in any space dimension were studied. There, to ensure a  dimension-independent optimal
  error estimate, one requires $\sum_{j=1}^\infty \Vert n_j\Vert_{W^{1,\infty}}^{2/3} = \mathcal{O}(k^{-2/3})$, (i.e., the amplitude of the allowed  randomness must decrease as $k$ increases).  Strong constraints on the allowed amplitude of the randomness also appear in the so-called multi-modes method described in \cite{FengLinLorton:2015}. We impose no such constraint in our computations. Quasi-Monte Carlo methods for random Helmholtz problems in 2d were studied in  \cite[Chapter 4]{Pe:20}, where it was observed (for moderate wavenumbers)  that the number of quadrature points needed to ensure a bounded error as $k$ increased apparently grew exponentially in $k$.

  The blow-up (as $k$ increases) of the derivatives with respect to $\by$  of the Helmholtz solution is directly related to the width of the region in $\mathbb{C}^d$ in which (the complex extension of)  $u$ is holomorphic.
This region of holomorphy is analysed in detail in the recent paper \cite{SpWu:22} for trapping and non-trapping geometries in any dimension and with general random perturbations of a deterministic base problem. In particular it is shown there  that the estimates in \cite{GaKuSl:21} (for a non-trapping case and the expansion \eqref{eq: temp_par}) are sharp.





\bigskip

The remainder of the paper is organized as follows. In \secref{sec:FCC_1D}, we recall the Filon-Clenshaw-Curtis (FCC)  rule for the 1d oscillatory integral and give some basic theory for it .
In \secref{sec:FCCS_high_dimension}, we combine the 1d quadrature with the Smolyak algorithm to obtain our new FCCS rule for the multi-dimensional oscillatory integrals.
In \secref{sec:Error_Analysis}, we give the error analysis for the  FCCS rule, proving Theorems \ref{thm:main_grid} and \ref{thm:main_grid1}. The application to the  UQ problem for the  Helmholtz equation
is given in \secref{sec:Apply_Helmholtz}. In \secref{sec:Numerical}, we present  numerical examples to demonstrate the performance of the FCCS rule and its application to the UQ problem, \blue{while \S \ref{sec:conclusion} provides  a short concluding section.}


\section{The FCC quadrature rule for 1D problems}
\label{sec:FCC_1D}
\noindent
In this section, we briefly review from \cite{GrahamDominguez:2011} the Filon-Clenshaw-Curtis (FCC) rule for approximating the one-dimensional  integral
\begin{align}
I^{\omega} g : =  \int_{-1}^{1} g( y) \exp(\ri \omega y) dy, \label{1DoscIntegralEq}
\end{align}
for  $\omega \in \mathbb{R}$, {where $g \in {C^{p}[-1,1]}$ for some integer $p \geq 1$.


In the oscillatory case ($\vert \omega \vert \geq 1$), the  FCC quadrature rule is obtained by replacing the factor $g$ in  \eqref{1DoscIntegralEq} by its degree $N$ polynomial interpolant at the Clenshaw-Curtis points $ \cos(j\pi / N)$, \
$j=0,...,N$ for $N \ge 1$ (extrema of the Chebyshev polynomial of the first kind   $ T_N(y):=\cos\big(N\arccos(y)\big)$).
The  interpolant is expressed in terms of the basis $\{T_n: n = 0, \ldots, N\}$,  and the products of these basis functions  with the oscillatory function $\exp(i\omega y)$ are integrated exactly to obtain the quadrature weights.

Starting with the  nested set of quadrature points \eqref{eq:grids}, the FCC approximation to \eqref{1DoscIntegralEq} is then
\begin{align}
{I^{\omega,\ell}_{\mathrm{FCC}} g} : =  \int_{-1}^{1} (Q^\ell g)(y) \exp(\ri \omega y) dy, \quad
\label{1DoscIntegralEq-FCC}
\end{align}
where $(Q^1 g) = g(0)$ and, for $\ell \ge 2$,  $Q^\ell g$ is the polynomial of degree $n_\ell $ satisfying
\begin{align}
(Q^\ell g)(t_{j,\ell}) = g(t_{j,\ell}), \quad j = 0, \ldots, n_\ell. \label{1Dinterpolation-operator}
\end{align}
It is a classical result that, for $\ell \ge 2$, $Q^\ell g$ can be written as
 \begin{align}
(Q^\ell g)(y)=\sum_{n=0}^{n_\ell}\vphantom{\sum}^{''}a_{n,\ell}(g)T_{n}(y), \label{ChebyshevExpansion1D}
\end{align}
where the notation $ \sum\vphantom{\sum}^{''} $ means that the first and the last terms in the sum are to be halved,
and the coefficients $a_{n, \ell}(g)$ are given by the discrete cosine transform:
\begin{align}
a_{n,\ell}(g)\, =\, \frac{2}{n_\ell} \sum_{j=0}^{n_{\ell}}\vphantom{\sum}^{''}\cos\left(\frac{jn\pi}{ n_\ell}\right)g(t_{j,\ell}),\quad n=0, \ldots, n_\ell .
\label{ChebyshevCoefficients}
\end{align}
Substituting \eqref{ChebyshevExpansion1D} into \eqref{1DoscIntegralEq-FCC} for $\ell \geq 2$, we obtain the  quadrature rule
\begin{align}
{I^{\omega,\ell}_{\mathrm{FCC}} \, g} :=
\left\{
\begin{aligned}
& W_0(\omega) g(0), & \quad \ell = 1, \\
& \sum_{n=0}^{n_\ell}  \vphantom{\sum}^{''} W_n(\omega)a_{n,\ell}(g), & \quad \ell \ge 2,
\end{aligned}
\right.
\label{1DoscIntegralEq-FCC-weight}
\end{align}
where the weights 
\begin{align}
W_n(\omega)=\int_{-1}^{1}T_n(y)\exp(\ri \omega y) \, dy, \quad n = 0, \ldots , n_\ell
\label{1DoscIntegralEq-FCC-weight2}
\end{align}
have to be computed.
An algorithm for computing  $W_n(\omega)$ for $\ell \geq 2$ is given and shown to be stable for all $n_\ell$ and $\omega$  in \cite{GrahamDominguez:2011}.

In the case where $\omega = 0$, the weights are known analytically:
\begin{align}
W_n(0)=\int_{-1}^{1}T_n(y)dy =\left\{
\begin{aligned}
& 0, & & n \text{ is odd,}\\
& \frac{2}{1-n^2}, & & n \text{ is even,}
\end{aligned}
\right.
\end{align}
and these provide us with the classical standard  Clenshaw-Curtis (CC) rule  \cite{clenshaw1960method}:
\begin{align} \label{CC}
\int_{-1}^1 g(y) \rd y \ \approx I^\ell_{\mathrm{CC}}\,  g \ :=\ \int_{-1}^1 Q^\ell g \ = \
\left\{
\begin{aligned}
& 2 g(0), & \quad \ell = 1, \\
& \sum_{n=0}^{n_\ell}\vphantom{\sum}^{''}W_n(0) a_{n,\ell}(g), & \quad \ell \ge 2.
\end{aligned}
\right.
\end{align}
When $\vert \omega \vert < 1$ the integral \eqref{1DoscIntegralEq}   is considered to be  non-oscillatory, and can be approximated directly with the CC rule.
Hence  our 1D quadrature method is:
\begin{definition}\label{def:quad}
For $\omega \in \mathbb{R} $ and $\ell \geq 1$, we define the approximation $I^{\omega,\ell} g$ to the integral  \eqref{1DoscIntegralEq} by
\begin{align}
I^{\omega,\ell} g = \left\{ \begin{array}{ll} I_{FCC}^{\omega,\ell} \, g, & \text{when} \ \ \vert \omega\vert  \ge 1, \\ & \\
                             I_{CC}^{\ell} \, (g(\cdot) \exp(\ri \omega \cdot)), & \text{when} \ \  \vert \omega\vert  <  1.  \end{array} \right. \label{eq:def:quad}
\end{align}
\end{definition}
\begin{remark}
Given the values $\{g(t_{j,\ell}): j = 1, \ldots 2^{\ell-1}\}$, the  quadrature rule $I^{\omega,\ell} \, g$  can be computed  with complexity $\mathcal{O}(n_\ell \log n_\ell)$, using FFT (see, e.g.,  \cite[Remarks 2.1, 5.4]{GrahamDominguez:2011}).
  \end{remark}

The following simple proposition uses  Notation \ref{not:tilde}  to give a unified expression for    \eqref{eq:def:quad} and will be useful later.
\begin{proposition}\label{prop:simple} Let $g \in C[-1,1]$ and $\ell \geq 1$.  Then the
  rule defined in \eqref{eq:def:quad} can be written
\begin{align} \label{eq:corres}  I^{ka,\ell} g = \int_{-1}^1 (Q^\ell \widehat{g})(y) \,  \exp(\ri k \ta y) \rd y , \quad \text{where} \quad \widehat{g}(y) = g(y) \exp(\ri k \ha y) .
\end{align}
\end{proposition}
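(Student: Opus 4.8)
The plan is to verify \eqref{eq:corres} directly from the definitions, splitting into the two regimes dictated by Notation \ref{not:tilde}: the oscillatory case $k|a| \geq 1$ and the non-oscillatory case $k|a| < 1$. In each regime the right-hand side of \eqref{eq:corres} collapses, through the values taken by $\ta$ and $\ha$, to exactly one of the two branches of Definition \ref{def:quad}.

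First I would treat the case $k|a| \geq 1$. Here Notation \ref{not:tilde} gives $\ta = a$ and $\ha = 0$, so that $\hg(y) = g(y)\exp(\ri k \cdot 0 \cdot y) = g(y)$ and $\exp(\ri k \ta y) = \exp(\ri k a y)$. The right-hand side of \eqref{eq:corres} then reads $\int_{-1}^1 (Q^\ell g)(y)\exp(\ri k a y)\,\rd y$, which is precisely the FCC rule $I^{ka,\ell}_{\mathrm{FCC}} g$ of \eqref{1DoscIntegralEq-FCC} with $\omega = ka$. Since $|\omega| = k|a| \geq 1$, this is exactly the branch selected by Definition \ref{def:quad}, so the two sides coincide.

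Next I would treat the case $k|a| < 1$, where $\ta = 0$ and $\ha = a$, so that $\hg(y) = g(y)\exp(\ri k a y)$ and $\exp(\ri k \ta y) = 1$. The right-hand side of \eqref{eq:corres} becomes $\int_{-1}^1 (Q^\ell \hg)(y)\,\rd y = \int_{-1}^1 Q^\ell\bigl(g(\cdot)\exp(\ri k a\cdot)\bigr)$, which is exactly the Clenshaw-Curtis rule $I^\ell_{\mathrm{CC}}\bigl(g(\cdot)\exp(\ri k a\cdot)\bigr)$ of \eqref{CC}; this again matches the branch chosen by Definition \ref{def:quad} when $|\omega| = k|a| < 1$, completing the identification.

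I do not expect a genuine obstacle: the proposition is essentially a bookkeeping identity, and its only real content is that the dichotomy $\ta \in \{0,a\}$ built into Notation \ref{not:tilde} is aligned with the threshold at $|\omega| = 1$ that governs the choice between the FCC and CC branches in Definition \ref{def:quad}. The point worth stressing in the writeup is the conceptual role of the splitting: in the oscillatory branch the interpolation operator $Q^\ell$ acts on $g$ alone while the oscillation $\exp(\ri k a y)$ is integrated exactly, whereas in the non-oscillatory branch the entire factor $g(\cdot)\exp(\ri k a\cdot)$ is interpolated and then integrated against $\exp(0)=1$. The notation $\hg$ together with $\ta$ packages both behaviours into the single formula \eqref{eq:corres}.
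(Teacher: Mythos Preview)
Your proof is correct and is exactly the verification the paper has in mind; indeed, the paper does not even include a proof of this proposition, treating it as immediate from Notation~\ref{not:tilde} and Definition~\ref{def:quad}. Your case split on $k|a|\geq 1$ versus $k|a|<1$ is the natural (and only) way to unpack the identity.
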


The following error estimate is a minor variation on the result in \cite[Corollary 2.3]{GrahamDominguez:2011}.

\begin{theorem}\label{prop:Errorestimate1dFCC}
   Define $\eta(1) = 0$, $\eta(2) = 3$. Then, for  $p \in \mathbb{N}$, $p >1$, and  $s \in \{ 1,2\} $,   there exists a constant $C_p$ such that, for all $ \ell \geq 2$     and $g \in \cW^{p+ \eta(s),1}$,   
  the quadrature rule \eqref{eq:def:quad} has the error estimate:
\begin{align}
  \Big|I^{\omega} g - {I^{\omega,\ell} g}\Big| \ \leq\  {C_p} \, \min\Big\{1,|\omega|^{-s}\Big\}\,
  \left(\frac{1}{n_\ell} \right)^{p-1} \, \Vert g \Vert_{\cW^{p+\eta(s),1}} ,
   \label{1DFCCestimate}
\end{align}
for all  $\omega \in \mathbb{R}$,
where  $\min\Big\{1,{|\omega|^{-s}}\Big\} := 1$, if $\omega  = 0$.
Moreover for $\ell = 1$ and any $p>1$ we  have
  \begin{align}\label{ell1est}
    \Big|I^{\omega} g - {I^{\omega,\ell} g}\Big| \ \leq\  {C_p} \, \min\Big\{1,|\omega|^{-1}\Big\}\,
  \left(\frac{1}{n_\ell} \right)^{p-1} \, \Vert g \Vert_{\cW^{p,1}} .
\end{align}
\end{theorem}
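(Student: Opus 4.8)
The plan is to collapse both branches of the rule \eqref{eq:def:quad} into a single FCC quadrature error and then to invoke the one-dimensional estimate of \cite[Corollary 2.3]{GrahamDominguez:2011}, isolating the genuinely new ingredient, namely the régime $0<|\omega|<1$. Applying Notation \ref{not:tilde} with $k=1$ and $a=\omega$ we write $\omega=\widetilde{\omega}+\widehat{\omega}$; then Proposition \ref{prop:simple} together with the identity $g(y)\exp(\ri\omega y)=\widehat{g}(y)\exp(\ri\widetilde{\omega}y)$ gives the common representation
\[
I^\omega g - I^{\omega,\ell} g \;=\; \int_{-1}^1 \bigl(\widehat{g} - Q^\ell\widehat{g}\bigr)(y)\,\exp(\ri\widetilde{\omega} y)\,\rd y, \qquad \widehat{g}(y) = g(y)\exp(\ri\widehat{\omega} y),
\]
so that in every case the error is exactly the FCC quadrature error of $\widehat{g}$ at frequency $\widetilde{\omega}$.

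First I would dispose of the oscillatory case $|\omega|\ge 1$. Here $\widehat{\omega}=0$, $\widehat{g}=g$ and $\widetilde{\omega}=\omega$, so the displayed quantity is precisely what is estimated in \cite[Corollary 2.3]{GrahamDominguez:2011}. That result supplies the factor $(1/n_\ell)^{p-1}$ together with the decay $|\omega|^{-s}$ at the cost of $\eta(s)$ extra bounded derivatives, which is exactly the origin of the values $\eta(1)=0$ and $\eta(2)=3$. Since $\min\{1,|\omega|^{-s}\}=|\omega|^{-s}$ on $|\omega|\ge1$, this yields \eqref{1DFCCestimate} in this régime with essentially no further work.

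Next I would treat the non-oscillatory case $|\omega|<1$ (including $\omega=0$), where the ``minor variation'' enters. Now $\widetilde{\omega}=0$ and $\widehat{g}(y)=g(y)\exp(\ri\omega y)$, so the error reduces to the plain Clenshaw--Curtis error $\int_{-1}^1(\widehat{g}-Q^\ell\widehat{g})\,\rd y$; the $\omega=0$ case of \cite[Corollary 2.3]{GrahamDominguez:2011} (equivalently, the standard Clenshaw--Curtis interpolation-error bound) gives $C_p(1/n_\ell)^{p-1}\|\widehat{g}\|_{\cW^{p,1}}$. It then remains only to absorb the exponential factor: because $|\omega|<1$, a one-dimensional Leibniz estimate, exactly as in Proposition \ref{prop:fhat}, bounds $\|\widehat{g}\|_{\cW^{p,1}}\le C_p\|g\|_{\cW^{p,1}}$ with $C_p$ independent of $\omega$. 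Since $\min\{1,|\omega|^{-s}\}=1$ here and $\|g\|_{\cW^{p,1}}\le\|g\|_{\cW^{p+\eta(s),1}}$, this matches \eqref{1DFCCestimate}; gluing the two régimes and taking the larger constant proves the estimate for all $\omega\in\mathbb{R}$ and all $\ell\ge2$.

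Finally, for $\ell=1$ the representation collapses, since $Q^1\widehat{g}\equiv\widehat{g}(0)$, to $I^\omega g - I^{\omega,1}g = \int_{-1}^1\bigl(\widehat{g}(y)-\widehat{g}(0)\bigr)\exp(\ri\widetilde{\omega}y)\,\rd y$. The trivial bound $|\widehat{g}(y)-\widehat{g}(0)|\le 2\|\widehat{g}\|_\infty$ gives the factor $1$, while a single integration by parts produces the factor $|\widetilde{\omega}|^{-1}$ at the cost of one derivative; together they give $\min\{1,|\omega|^{-1}\}$, and as above $\|\widehat{g}\|_{\cW^{1,1}}$ is controlled by $\|g\|_{\cW^{p,1}}$ for $p>1$. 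With a single node, second-order decay is unavailable, which is exactly why $\ell=1$ is confined to $s=1$. I expect the main obstacle to be bookkeeping rather than conceptual: one must check that the constant in \cite[Corollary 2.3]{GrahamDominguez:2011} is genuinely uniform in $\omega$, and that the Leibniz bound on $\|\widehat{g}\|$ is uniform over $|\omega|<1$, so that the two régimes can be merged into one estimate carrying the clean $\min\{1,|\omega|^{-s}\}$ prefactor.
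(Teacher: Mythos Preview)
Your approach is essentially the same as the paper's: split into $|\omega|\ge1$ versus $|\omega|<1$, quote the Dom\'inguez--Graham estimate in the first r\'egime, use a plain Clenshaw--Curtis bound plus Leibniz in the second, and handle $\ell=1$ by a single integration by parts.

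The one substantive difference is the norm in which the cited estimate lands. The paper does \emph{not} invoke Corollary~2.3 of \cite{GrahamDominguez:2011} directly but rather Theorem~2.2, which gives the bound in terms of the Sobolev norm $\|g_c\|_{H^{p+\eta(s)}}$ of the cosine transform $g_c(\theta)=g(\cos\theta)$; the same intermediate norm appears in the $|\omega|<1$ r\'egime via the Clenshaw--Curtis interpolation bound. The conversion from $\|g_c\|_{H^{p}}$ to $\|g\|_{\cW^{p,1}}$ is then done explicitly via the Fa\`a di Bruno formula, and this step---not uniformity in $\omega$---is what produces the $p$-dependence of $C_p$ (in fact a Bell-number growth) and, crucially, an estimate that is uniform over \emph{all} $\ell\ge2$. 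If you simply cite Corollary~2.3 you should check that it does not carry a hidden constraint of the form $n_\ell\gtrsim p$; the paper's subsequent remark suggests that such a restriction is exactly what the Fa\`a di Bruno detour removes.
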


\begin{proof}
 When   $\ell \geq 2$, we have  $n_\ell \geq 2$. A slight variation of  \cite[Theorem 2.2]{GrahamDominguez:2011} then shows  that there exists a constant $C>0$ such that, for all $p > 1$,  the estimate
  \begin{align}
  \Big|I^{\omega} g - {I^{\omega,\ell} g}\Big| \ \leq\  {C} |\omega|^{-s}  \,
    \left(\frac{1}{n_\ell} \right)^{p-1}  \, \Vert g_c \Vert_{H^{p+ \eta(s)}}, \label{noteestimate}
\end{align}
holds,  for  $\vert \omega \vert \geq 1$, where $g_c(\theta) := g(\cos \theta) $ is the cosine transform of $g$ and $\Vert \cdot  \Vert_{H^p}$ is the usual univariate Sobolev norm of order $p$. The estimate in \cite[Theorem 2.2]{GrahamDominguez:2011} is only stated for  positive $\omega$, but  the negative case  can be trivially obtained  from this by replacing $y $ by $-y$ and $\omega$ by $-\omega$ in  \eqref{1DoscIntegralEq}.

For $\vert \omega \vert < 1$,  we proceed by estimating  the error in the  classical Clenshaw-Curtis rule applied to  $\widehat{g} (y) := g(y) \exp(\ri \omega y)$ by
\begin{align} \left\vert I^\omega g  - I^{\omega,\ell} g \right\vert & =
  \left\vert \int_{-1}^1 (I - Q^\ell) \widehat{g} \right\vert    \leq {\sqrt{2}} \Vert (I - Q^\ell ) \widehat{g} \Vert_{L^2[-1,1]}
\nonumber   \\ & \leq {\sqrt{2}}  \left(\int_0^\pi \vert ((I - Q^\ell) \widehat{g} )_c(\theta) \vert^2 \rd \theta \right)^{1/2}\nonumber  \\ & \leq C \left(\frac{1}{n_\ell}
                                                                                                                                                 \right)^{p}\Vert \widehat{g}_c\Vert_{H^p}.
                                                                                                                                                 \label{noteestimate1}
\end{align}
(In the last step,  we used  \cite[eq (15)]{GrahamDominguez:2011}).

The constants  $C$ appearing in \eqref{noteestimate} and  \eqref{noteestimate1} are  independent of $\ell$ and $\omega$ as well as $p$, but to  complete the proof we need to estimate the Sobolev norms  on the right-hand sides of \eqref{noteestimate} and \eqref{noteestimate1} in terms of the $\cW^{p,1}$ norm of $g$. This is where the $p$-dependence appears. Since all the derivatives of $\cos \theta$ are bounded above by $1$, the Faa di Bruno formula (e.g.  \cite{Ha:06})) readily yields
$$ \left\vert \left( \frac{\rd}{\rd \theta} \right)^p (g_c(\theta)) \right\vert \ \leq\ \sum_{\cP} \vert g^{(\vert \cP \vert )} (\cos \theta) \vert , \quad \theta \in [-\pi, \pi], $$
where the sum is over all partitions $\cP$ of the set $\{1, \ldots , p\}$, with $\vert \cP\vert$ denoting the number of subsets  in $\cP$. Since  $\vert \cP \vert \leq p$ and the number of all such partitions is $B_p$ (the $p$th {\em Bell number}) it follows that $\Vert g_c\Vert_{H^p} \leq C B_p \Vert g \Vert_{\cW^{p,1}}.$
\blue{Also, since $\widehat{g}_c(\theta) = g_c(\theta) \exp(\ri \omega \cos \theta)$,  an application of the Leibniz rule shows that} $$ \blue{\Vert \widehat{g}_c \Vert_{H^p} \leq K_p \Vert g_c\Vert_{H^p} \leq C B_p K_p \Vert g \Vert_{\cW^{p,1}},}$$
\blue{and combining this with \eqref{noteestimate1}, 
  completes the proof of \eqref{1DFCCestimate}.}

To obtain \eqref{ell1est} for $\vert \omega \vert   \geq 1$, we integrate by parts to obtain
\begin{align*} I^\omega g - I^{\omega,1} g & = \int_{-1}^1 (g(y) - g(0) ) \exp(\ri \omega y) \rd y \\
                                           & = \frac{1}{\ri \omega} \left( \left[ (g(y) - g(0))  \exp(\ri \omega y) \right]_{-1}^1 -
                 \int_{-1}^1 g'(y) \exp(\ri \omega y ) \rd y  \right),
\end{align*}
from which the required estimate follows, since $n_1 = 1$. When $\vert \omega \vert < 1$,
the proof is trivial.
\end{proof}

\begin{remark}[Dependence on $p$] In the proof  above,  the constant $C_p$  in \eqref{1DFCCestimate} grows quickly with $p$, in fact with the order of the growth of the Bell number $B_p$. This is the price we pay for an   estimate to be uniform in $\ell$.
(Uniformity of the estimate with respect to  $\ell$ is required in the proof of Theorem \ref{thm:main}, because in   the Smolyak construction, low order approximations in some dimensions are combined with high order in others, so we need estimates for all orders.)    If \eqref{1DFCCestimate} were only required to hold for  $\ell$ sufficiently large (relative to $p$), then the constant $C_{p}$ can be bounded independently of $p$ (see, for example, \cite[Remark 2.4]{GrahamDominguez:2013}).
\end{remark}

  \begin{remark}[\blue{Relation to results  from complexity theory}] \label{rem:Novak}
\blue{The results in Theorem \ref{prop:Errorestimate1dFCC} imply that, for $p \geq 2$,
  \begin{align} \sup_{\Vert g \Vert_{\cW^{p,1}} \leq 1} \left \vert I^{\omega}g  - I^{\omega, \ell}g \right\vert   \lesssim \vert \omega\vert^{-1} \left(\frac{1}{n_\ell}\right)^{p-1} \quad \text{and}  \quad \sup_{\Vert g \Vert_{\cW^{p+3,1}}\leq 1}
\left \vert I^{\omega}g  - I^{\omega, \ell}g \right\vert  \lesssim \vert \omega\vert^{-2} \left(\frac{1}{n_\ell}\right)^{p-1}, \label{worst_case} \end{align}
where  $\lesssim$ indicates that  a generic multiplicative constant independent of $\omega$, $p$ and $\ell$ is omitted.
These are bounds for the ``{\em worst-case error}'' as studied in complexity theory - see   \cite{NoUlWo:15}, \cite{No:23},  where the authors  develop  this theory for the one-dimensional oscillatory integral $I^\omega g $
(albeit for integer $\omega$ and in Sobolev spaces,  rather than the spaces of functions with continuous partial derivatives  considered here).
In  that literature the {\em normalized error}  is defined as  the quotient  of the absolute error
in \eqref{worst_case} and the norm of $I^\omega$. Since trivial integration by parts yields
$$ I^\omega g = \frac{1}{\ri \omega} \left( \left[\exp(\ri \omega y) g(y) \right]_{-1}^1 - \int_{-1}^{1} \exp(\ri \omega y) g'(y) \rd y \right),$$
  we have  $\Vert I^\omega \Vert_{\cW^{p,1}} \lesssim \vert \omega \vert^{-1}  $, for all $p\geq 1$.  Moreover,  this bound is sharp in terms of $\omega$ dependence.   In particular:
$$\mathrm{(a)} \quad I^\omega 1 = \frac{2}{\omega} \sin \omega \quad \text{and} \quad \mathrm{(b)} \quad I^\omega y =
\frac{2}{\ri \omega}\left(\cos \omega -  \frac{\sin \omega}{\omega}\right). $$ Using (a) for $\omega$ bounded away from integer multiples of $\pi$ and (b) otherwise, we obtain   $ \vert \omega \vert^{-1} \lesssim \Vert I^\omega \Vert_{\cW^{p,1}}   $, for all $p \geq 0$. Thus \eqref{worst_case} can be rewritten:
 \begin{align} \sup_{\Vert g \Vert_{\cW^{p,1}} \leq 1} \left \vert I^{\omega}g  - I^{\omega, \ell}g \right\vert   \lesssim \left(\frac{1}{n_\ell}\right)^{p-1} \Vert I^\omega \Vert_{\cW^{p,1}}\quad \text{and}  \quad \sup_{\Vert g \Vert_{\cW^{p+3,1}}\leq 1}
   \left \vert I^{\omega}g  - I^{\omega, \ell}g \right\vert  \lesssim \vert \omega\vert^{-1} \left(\frac{1}{n_\ell}\right)^{p-1} \Vert I^\omega \Vert_{\cW^{p+3,1}}, \label{worst_case_norm} \end{align}
 yielding bounds on the normalized error (as defined in \cite{NoUlWo:15}, \cite{No:23})}. \blue{While both bounds in \eqref{worst_case} show that as $\vert \omega \vert \rightarrow \infty $, the (absolute) error gets smaller and in some sense the problem becomes `easier',
 the second bound in \eqref{worst_case_norm} shows that the same is true for the normalized error, provided we are working in spaces of sufficiently smooth functions.}

\blue{Compared to these results, the detailed analysis in \cite{NoUlWo:15} has some  major differences:
  While \eqref{worst_case}  gives error estimates for our  specific FCC rule, \cite{NoUlWo:15} studies
  the infimum
of this worst case error (absolute or  normalized) over all quadrature rules using a given number of point evaluations of $f$  (thus including the FCC rule as a special case). Moreover \cite{NoUlWo:15} gives both upper and lower error bounds (essential in complexity theory). Neverthess  \cite{NoUlWo:15} contains some qualitative  statements which are comparable  to ours, in particular (a) When considering absolute errors, the problem becomes easier for large $\vert \omega\vert$ (\cite[Corollary 16]{NoUlWo:15}); (b) When considering normalized errors the problem also becomes easier for large $\vert\omega\vert$, but this is only true  when $g$ has enough  Sobolev regularity \cite[Corollary 18]{NoUlWo:15}.  }

\blue{While the results in \cite{NoUlWo:15} only apply in  one dimension,
  the purpose of this paper is to study the high-dimensional FCCS rule.
  Here we show that our method maintains the aforementioned advantages for computing $\cI^{k,d,\ba} f$:  as $|k|$ grows,  the accuracy of our method improves when considering the  absolute error criterion; see Theorem \ref{thm:main_grid}, and also improves when considering normalized error, under certain mild additional  regularity conditions; see Theorem \ref{thm:main_grid1}. 
}

\end{remark}

\section{An FCCS rule for multi-dimensional integrals}
\label{sec:FCCS_high_dimension}
\noindent
The direct application of the tensor product version of a  conventional 1D rule  to the multi-dimensional problem  \eqref{highDoscIntegralEq} will give very poor results as $d$ or $k$  increases, \blue{firstly} because of  the high oscillation and \blue{secondly}  because of the curse of dimensionality.  The difficulty is illustrated by the following simple example.
  \begin{example} \label{prop:bad}
  	Suppose the integral \eqref{highDoscIntegralEq} is approximated  by the tensor product of the 1d  Clenshaw-Curtis rule \eqref{CC},   using  $n+1$ points in each coordinate direction, so that the integrand is evaluated at  $N := (n+1)^d$ points. Then,  in the special case $f(\by) = 1$ and $\ba = (1, 0, 0, \ldots, 0)^\top$,
    the error is
    $$
     \int_{[-1,1]^{d-1}} \int_{-1}^1 (I - P^n) \bigg( \exp(\ri k \cdot)\bigg)(y_1)  \,   \rd y_1  \rd y_2 \ldots \rd y_d       \ =\  2^{d-1} \, \int_{-1}^1 (I - P^n)\bigg(\exp(\ri k \cdot)\bigg)(y_1)  \,   \rd y_1 ,   $$
       where $P^n$ denotes the polynomial interpolant at $n+1$ Clenshaw-Curtis points in 1d (i.e. the operator $Q^\ell$ in \eqref{ChebyshevExpansion1D},  with $n_\ell$ replaced by $n$).
The error estimate for this is
     $k^{p}\,  n^{-p} = k^p \, \mathcal{O}(N^{-p/d})$ for any $p$.
   \end{example}
We alleviate  the problem of growth with respect to $k$ by adopting   the Filon approach described above.
Then, to   reduce the effect of dimension (encapsulated in the $N^{-p/d}$ term), we approximate
 \eqref{highDoscIntegralEq} by replacing $\hf$ in \eqref{highDoscIntegralEq1} by  its Smolyak {interpolant} $\cQ^{r,d} \hf$ defined as follows.


Using the 1D interpolation operator  $Q^\ell$  in \eqref{1Dinterpolation-operator},
and  the nested sequence of Clenshaw-Curtis grids 
in  \eqref{eq:grids},    we define the  difference  operator $D^{\ell} $ (for $\ell \geq 1$)  by
\begin{align}
{D}^{\ell}g:=(Q^{\ell}-Q^{\ell-1})g,
\quad \text{with} \quad  Q^{0} g :=0.
\label{FCCSmolyak1D_Diff}
\end{align}
To define the Smolyak interpolation operator, it is convenient to define the index set
$\Lambda(q,d)$, for  integers $q, d$ with  $q \geq d$ by
$$ \Lambda (q,d) \ =\  \{ \bell \in \mathbb{N}^d :  \boldsymbol{1} \leq \bell, \,  \vert \bell \vert \leq q\},  $$
where  $\boldsymbol{1} = (1,\ldots, 1)^\top$ and $\vert \bell \vert = \ell_1 + \ell_2 + \ldots + \ell_d$. By \cite[p.13]{WaWo:95}, the cardinality of $\Lambda(q,d)$ is given by the binomial coefficient:
\begin{align} \label{card} \# \Lambda(q,d) = \left(\begin{array}{l}q\\d\end{array}\right). \end{align}
Smolyak's formula for interpolating any  function $f: \mathbb{R}^d \rightarrow \mathbb{R}$,  with maximum level $r\in \mathbb{N} := \{1,2,3, \ldots\}$, is then given (e.g., in \cite[eq (10)]{WaWo:95} or  \cite[p.214]{GeGr:98}) by
\begin{align}
(\mathcal{Q}^{r,d}f)(\by):= \sum_{\boldsymbol{\ell} \in \Lambda(r + d -1,d)}({D}^{\ell_1}\otimes...\otimes {D}^{\ell_d})f (\by)  .
\label{FCCSmolyak-dD}
\end{align}
See \cite[Prop 6]{BaNoRi:00} for a discussion of the interpolatory properties of $\mathcal{Q}^{r,d}$. The notation $ {D}^{\ell_1}\otimes...\otimes {D}^{\ell_d} $ indicates that we apply $ {D}^{\ell_j} $ with respect to variable $ y_j $, for each $ j=1,...,d $. Note that,  since $\boldsymbol{\ell} \in \Lambda(r+d-1,d)$ in \eqref{FCCSmolyak-dD},    we have
  $ d \leq \vert \bell\vert \leq r+d-1$ and   also $1 \leq \ell_j \leq r$ for each $j = 1, \ldots, d$.

Then, to define the FCCS rule for \eqref{highDoscIntegralEq1} (and hence     
  \eqref{highDoscIntegralEq}),
we replace $\hf$   in \eqref{highDoscIntegralEq1} by 
$(\mathcal{Q}^{r,d}\hf)$, thus obtaining  
\eqref{FCCSmolyak_intro}.
An alternative formula for $\mathcal{Q}^{r,d} \hf$ is obtained  using the {\em combination technique}  (\cite[Lemma 1]{WaWo:95}, \cite[Section 4.1]{GeGr:98}). This allows the formula \eqref{FCCSmolyak-dD} to  be written in terms of $Q^{l}$ instead of ${D}^{l}$; the result is:
\begin{align}
(\mathcal{Q}^{r,d}\hf)(\by):= \sum_{\stackrel{\boldsymbol{\ell} \geq \boldsymbol{1}}{r\leq |\boldsymbol{\ell}| \leq r+d-1}}\, (-1)^{r+d-|\boldsymbol{\ell}|-1}\binom{d-1}{|\boldsymbol{\ell}|-r}({Q}^{\ell_1}\otimes...\otimes {Q}^{\ell_d})\hf (\by). \label{FCCSmolyak-dD-alt}
\end{align}

A useful observation from this is (when $d=1$),
  \begin{align} \label{eq:34a} \mathcal{Q}^{r,1} \widehat{f} = Q^r \widehat{f} \end{align}
Then, inserting     \eqref{FCCSmolyak-dD-alt} into   \eqref{highDoscIntegralEq1},
we obtain the following approximation of \eqref{highDoscIntegralEq}:

\begin{proposition}\label{prop:FCCS_alt}
\begin{align}
\mathcal{I}^{k,d,\ba,r} f
  &=\sum_{\stackrel{\boldsymbol{\ell} \geq \boldsymbol{1}}{r\leq |\boldsymbol{\ell}| \leq r+d-1}}
(-1)^{r+d-|\boldsymbol{\ell}|-1}\binom{d-1}{|\boldsymbol{\ell}|-r}
(I^{\omega_1,\ell_1}\otimes\cdot\cdot\cdot\otimes I^{\omega_d,\ell_d})f ,
\label{FCCSmolyak_highDoscIntegralEq2}
\end{align}
where \begin{align} \label{scale}
        \omega_j=ka_j \quad \text{for each} \quad  j=1,...,d.\end{align}
    \end{proposition}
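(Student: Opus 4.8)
The plan is to start from the definition of the rule in \eqref{FCCSmolyak_intro}, substitute the combination-technique form \eqref{FCCSmolyak-dD-alt} of the Smolyak interpolant, and then recognise each resulting summand as a tensor product of the 1D rules of Definition \ref{def:quad}. Concretely, I would first write
$$\mathcal{I}^{k,d,\ba,r}f = \int_{[-1,1]^d}(\mathcal{Q}^{r,d}\hf)(\by)\exp(\ri k\tba\cdot\by)\,\d\by$$
and insert the expansion \eqref{FCCSmolyak-dD-alt} for $\mathcal{Q}^{r,d}\hf$. Since the index set $\{\bell \ge \boldsymbol{1} : r \le |\bell| \le r+d-1\}$ is finite, I can interchange the sum and the integral with no convergence concern, pulling the scalar coefficients $(-1)^{r+d-|\bell|-1}\binom{d-1}{|\bell|-r}$ outside.

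This reduces the whole statement to establishing, for each fixed multi-index $\bell$, the single identity
$$\int_{[-1,1]^d}\big((Q^{\ell_1}\otimes\cdots\otimes Q^{\ell_d})\hf\big)(\by)\exp(\ri k\tba\cdot\by)\,\d\by = (I^{\omega_1,\ell_1}\otimes\cdots\otimes I^{\omega_d,\ell_d})f,$$
where $\omega_j = ka_j$ as in \eqref{scale}. The central step is to verify this by peeling off one coordinate at a time. Using $\hf(\by) = f(\by)\exp(\ri k\hba\cdot\by)$ together with the factorisations $\exp(\ri k\hba\cdot\by)=\prod_{j}\exp(\ri k\widehat{a_j}y_j)$ and $\exp(\ri k\tba\cdot\by)=\prod_{j}\exp(\ri k\widetilde{a_j}y_j)$, each variable $y_j$ enters through exactly the three operations that define the 1D rule in Proposition \ref{prop:simple}: multiply by $\exp(\ri k\widehat{a_j}y_j)$, apply the interpolation $Q^{\ell_j}$ in $y_j$, and integrate against $\exp(\ri k\widetilde{a_j}y_j)\,\d y_j$. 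By Fubini's theorem and the fact that operations acting on distinct coordinates commute, I can process the $d$ coordinates one at a time; for coordinate $j$ the ordered triple (multiply, interpolate, integrate) is precisely the 1D rule $I^{\omega_j,\ell_j}$ supplied by Proposition \ref{prop:simple} with $a=a_j$. Reassembling across all coordinates converts the left-hand side into $(I^{\omega_1,\ell_1}\otimes\cdots\otimes I^{\omega_d,\ell_d})f$.

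The main obstacle is the bookkeeping in this tensorisation: one must check that the factor $\exp(\ri k\widehat{a_j}y_j)$ attached to coordinate $j$ commutes with the interpolation $Q^{\ell_m}$ and with the integration in every other coordinate $m\ne j$. This holds because $\exp(\ri k\widehat{a_j}y_j)$ is constant in $y_m$, while the linear operator $Q^{\ell_m}$ and the $\d y_m$-integration act only on the $y_m$-dependence, so the factor may be pulled through them; the ordering \emph{within} a coordinate (multiply, then interpolate, then integrate) is never disturbed and is exactly what Proposition \ref{prop:simple} encodes. Once this commutativity is in place, the reduction to the 1D identity is immediate, and reinstating the finite sum with its coefficients yields \eqref{FCCSmolyak_highDoscIntegralEq2}, completing the proof.
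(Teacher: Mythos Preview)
Your proposal is correct and follows essentially the same route as the paper: reduce via \eqref{FCCSmolyak_intro} and \eqref{FCCSmolyak-dD-alt} to the single identity \eqref{highD_Q_to_I}, then verify that identity coordinate-by-coordinate using Proposition~\ref{prop:simple}. The only stylistic difference is that the paper formalises the ``peeling off one coordinate at a time'' step as an explicit induction on $d$ (introducing the notation $\by=(\by^*,y_{d+1})$ and the auxiliary functions $F^{\omega,\ell}$, $\widehat{F}^{\omega,\ell}$), whereas you appeal directly to Fubini and the commutativity of operations acting on distinct coordinates; the content is the same.
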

\begin{proof}
By \eqref{FCCSmolyak_intro} and \eqref{FCCSmolyak-dD-alt}, it is sufficient to prove that 
\begin{align}
\int_{[-1, 1]^d} ({Q}^{\ell_1}\otimes...\otimes {Q}^{\ell_d})\hf (\by) \exp(\ri k \tba \cdot \by) \rd \by = (I^{\omega_1,\ell_1}\otimes\cdot\cdot\cdot\otimes I^{\omega_d,\ell_d})f,
\label{highD_Q_to_I}
\end{align}
with $\omega_j$ as given in \eqref{scale}.
The proof of \eqref{highD_Q_to_I} is obtained  by induction on the dimension $d$. For $d = 1$,
we have, directly from Proposition \ref{prop:simple},
\begin{align}
\int_{-1}^1 (Q^{\ell_1} \hf)(y) \exp(\ri  k \ta_1 y  ) \rd y   = I^{ka_1, \ell_1}f =  I^{\omega_1, \ell_1}f.
\end{align}

Now suppose \eqref{highD_Q_to_I} holds for dimension  $d$ and, considering  dimension $d+1$, we  introduce new notation as follows.
For  $\by \in [-1,1]^{d + 1}$ and $\ba \in \mathbb{R}^{d + 1}$, we write   $\by = (\by^*, y_{d + 1})$ and $\ba = (\ba^*, a_{d + 1})$, where  $\by^* \in [-1, 1]^d$ and $\ba^* \in \mathbb{R}^d$.
Moreover,  for any $f\in C( [-1, 1]^{d + 1})$  and any fixed $\by^* \in [-1, 1]^d$, we write $f(\by^*, \cdot)$ to denote the univariate function $y_{d + 1} \mapsto f(\by^*, y_{d + 1}) = f(\by)$. 
Using $I^{\omega, \ell}[f(\by^*, \cdot)]$ to denote the application of the quadrature rule \eqref{eq:def:quad} to   $f(\by^*, \cdot)$,  we also define the $d-$variate functions
\begin{align}\label{dvariate} F^{\omega, \ell}(\by^*) =   I^{\omega, \ell}[f(\by^*, \cdot)], \quad \text{and} \ \widehat{F}^{\omega, \ell}(\by^*) = F^{\omega, \ell}(\by^*) \exp(\ri k \hba^* \cdot \by^*)  \quad \text{for all} \ \by^* \in [-1,1]^d.
\end{align}
Then, we have
\begin{align}
& \int_{[-1, 1]^{d + 1}} ({Q}^{\ell_1}\otimes...\otimes {Q}^{\ell_d} \otimes {Q}^{\ell_{d + 1}})\hf (\by) \exp(\ri k \tba \cdot \by) \rd \by \nonumber \\
& \mbox{\hspace{0.5cm}} = \int_{[-1, 1]^d} ({Q}^{\ell_1}\otimes...\otimes {Q}^{\ell_d}) \left( \int_{-1}^1 ({Q}^{\ell_{d + 1}} \hf (\by^*, \cdot) \exp(\ri k \ta_{d + 1} \cdot)) (y_{d + 1}) \rd y_{d + 1} \right) \exp(\ri k \tba^* \cdot \by^* ) \rd \by^*\label{eq:311}
\end{align}
By  Proposition \ref{prop:simple},  we have
$$\int_{-1}^1 ({Q}^{\ell_{d + 1}} \hf (\by^*, \cdot) \exp(\ri k \ta_{d + 1} \cdot)) (y_{d + 1}) \rd y_{d + 1}  = I^{\omega_{d+1}, \ell_{d+1}} \left[f(\by^*,\cdot)\right] \exp(\ri k \hba^*\cdot \by^*)  = \widehat{F}^{\omega_{d+1}, \ell_{d+1}} (\by^*).   $$
Inserting this into \eqref{eq:311} and  using the inductive hypothesis (i.e., that \eqref{highD_Q_to_I} holds), we obtain
\begin{align}
  & \int_{[-1, 1]^{d + 1}} ({Q}^{\ell_1}\otimes...\otimes {Q}^{\ell_d} \otimes {Q}^{\ell_{d + 1}})\hf (\by) \exp(\ri k \tba \cdot \by) \rd \by \nonumber \\
  & \mbox{\hspace{1in}} = \int_{[-1, 1]^{d}} \left(({Q}^{\ell_1}\otimes...\otimes {Q}^{\ell_d})\widehat{F}^{\omega_{d+1}, \ell_{d+1} } \right) (\by^*) \exp(\ri k \tba^* \cdot \by^*) \rd \by^* \nonumber \\
  & \mbox{\hspace{1.5in}} = \left(I^{\omega_1, \ell_1} \otimes \ldots \otimes I^{\omega_d, \ell_d}\right) \,  F^{\omega_{d+1}, \ell_{d+1}}.\label{insert}
\end{align}

The fact that \eqref{highD_Q_to_I} holds for dimension $d+1$ then  follows by the defintion of $F^{\omega_{d+1},\ell_{d+1}}$ in \eqref{dvariate}.
\end{proof}

\section{Error analysis of the FCCS  rule}
\label{sec:Error_Analysis}
\noindent
In this section, we shall provide an error estimate of the FCCS rule \eqref{FCCSmolyak_highDoscIntegralEq2}  for approximating  \eqref{highDoscIntegralEq} (equivalently \eqref{highDoscIntegralEq1}).
Before this we need several preliminary results.

\begin{lemma} \label{lem:1}   (i) Recall $I^\omega g$ defined in \eqref{1DoscIntegralEq}. Then,
 for any $\omega \in \mathbb{R}$ and $g \in \cW^{1,1}$,
 $$ \vert I^{\omega} g \vert \  \leq \ 4 \min\{1,|\omega|^{-1}\}\,  \Vert g \Vert_{\cW^{1,1}},$$
 where $ \min\{1,|\omega|^{-1}\} :=1 $ when $\omega = 0$ .\\
  (ii) Let  $g \in \cW^{p,1}$,  with  $p > 1$, let  $a \in \mathbb{R}$, and  define $\widetilde{a}, \ha$ as   {in Notation \ref{not:tilde} and set $\widehat{g}(y) = g(y) \exp(\ri k \ha y)$. } Then, for all  $k \ge 0$,
   and  $\ell \geq 1$, we have
  \begin{align}  \left\vert \int_{-1}^1 (D^\ell \hg) (y) \exp(\ri k \ta y) \rd y \right\vert \ \leq \ C_p^{\prime} \min\{ 1, |\omega|^{-1} \} \,  \left(\frac{1}{n_{\ell-1}}\right)^{p-1} \, \Vert g \Vert_{\cW^{p,1}} , \   \quad
    \label{1dgeq1}\end{align}  where $\omega = k a$,  $C_p^{\prime} = {2 \max\{ C_p, 2 \}}$, with $C_p$ as in Proposition \ref{prop:Errorestimate1dFCC} and we have set $n_0 = 1$.  

Moreover,  for  $k \geq0$ and $\ell \geq 3$,
\begin{align}
	\left\vert \int_{-1}^1 (D^\ell \hg) (y) \exp(\ri k \ta y) \rd y \right\vert \ \leq \ C_p^{\prime} \min\{ 1, |\omega|^{-2} \} \,  \left(\frac{1}{n_{\ell-1}}\right)^{p-1} \, \Vert g \Vert_{\cW^{p+3,1}}.
\label{1dgeq3}
\end{align}
\end{lemma}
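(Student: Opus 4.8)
The plan is to treat the two parts separately, with part (i) handled by elementary integration by parts and part (ii) resting on the identity of Proposition \ref{prop:simple} together with the one-dimensional error estimate of Theorem \ref{prop:Errorestimate1dFCC}.

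For part (i) I would split on the size of $\omega$. When $|\omega| < 1$ (or $\omega = 0$) the factor $\min\{1,|\omega|^{-1}\}$ equals $1$, so the crude bound $|I^\omega g| \leq \int_{-1}^1 |g| \leq 2\|g\|_{\infty} \leq 2\|g\|_{\cW^{1,1}}$ already suffices (the stated constant $4$ leaves room to spare). When $|\omega| \geq 1$, integrating $I^\omega g = \int_{-1}^1 g(y)\exp(\ri\omega y)\rd y$ by parts transfers the oscillation onto a prefactor $1/(\ri\omega)$, leaving a boundary term and an integral of $g'$; bounding each by $\|g\|_{\cW^{1,1}}$ gives $|I^\omega g| \leq (2\|g\|_{\infty} + 2\|g'\|_{\infty})/|\omega| \leq 4|\omega|^{-1}\|g\|_{\cW^{1,1}}$, which is exactly the claim since $\min\{1,|\omega|^{-1}\} = |\omega|^{-1}$ in this range.

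For part (ii) the key first step is to rewrite the difference-operator integral. Using $D^\ell = Q^\ell - Q^{\ell-1}$ (with $Q^0 = 0$) and applying Proposition \ref{prop:simple} at levels $\ell$ and $\ell - 1$, I would obtain
$$\int_{-1}^1 (D^\ell \hg)(y)\exp(\ri k\ta y)\rd y = I^{\omega,\ell}g - I^{\omega,\ell-1}g,\qquad \omega = ka,$$
where $I^{\omega,0}g := 0$. Telescoping through the exact value $I^\omega g$ and using the triangle inequality, the left-hand side is controlled by $|I^\omega g - I^{\omega,\ell}g| + |I^\omega g - I^{\omega,\ell-1}g|$, the sum of two FCC quadrature errors. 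For \eqref{1dgeq1} I apply Theorem \ref{prop:Errorestimate1dFCC} with $s = 1$ (so $\eta(1) = 0$ and only $\cW^{p,1}$ regularity is needed) to each term whose level is $\geq 2$; since $n_\ell \geq n_{\ell-1}$ and $p - 1 > 0$, the monotonicity $(1/n_\ell)^{p-1} \leq (1/n_{\ell-1})^{p-1}$ lets both errors be absorbed into the single factor $(1/n_{\ell-1})^{p-1}$, and summing produces the constant $2C_p \leq C_p'$. The estimate \eqref{1dgeq3} follows by identical telescoping, now invoking the $s = 2$ case ($\eta(2) = 3$, hence the $\cW^{p+3,1}$ norm and the sharper $\min\{1,|\omega|^{-2}\}$ factor); this requires both $\ell$ and $\ell - 1$ to be $\geq 2$, which is precisely why the hypothesis reads $\ell \geq 3$.

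The hard part, and the reason for the slightly awkward constant $C_p' = 2\max\{C_p,2\}$ and the convention $n_0 = 1$, is the treatment of the low-level boundary cases in \eqref{1dgeq1}. At $\ell = 2$ the term $I^{\omega,\ell-1}g = I^{\omega,1}g$ must be estimated via the separate $\ell = 1$ bound \eqref{ell1est} rather than \eqref{1DFCCestimate}; since $n_1 = 1$ this contributes a factor $(1/n_1)^{p-1} = 1 = (1/n_{\ell-1})^{p-1}$, consistent with the claimed form. At $\ell = 1$ the difference operator collapses to $D^1\hg = Q^1\hg = \hg(0) = g(0)$, so the integral is $g(0)\int_{-1}^1\exp(\ri k\ta y)\rd y$; evaluating this last integral directly and using that $\ta = a$ when $|\omega|\geq 1$ and $\ta = 0$ otherwise yields the bound $2\min\{1,|\omega|^{-1}\}\,|g(0)|$, which fits under $C_p'(1/n_0)^{p-1}\|g\|_{\cW^{p,1}}$ because $C_p' \geq 4 > 2$ and $n_0 = 1$. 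Once the telescoping identity is in place, verifying these endpoint cases and checking the monotonicity bookkeeping is entirely elementary, so I expect no genuine analytic obstacle beyond careful case distinction.
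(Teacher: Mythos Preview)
Your proposal is correct and follows essentially the same approach as the paper's own proof: integration by parts for part (i), and for part (ii) the identity $\int_{-1}^1 (D^\ell\hg)\exp(\ri k\ta\,\cdot\,) = I^{\omega,\ell}g - I^{\omega,\ell-1}g$ via Proposition~\ref{prop:simple}, followed by the triangle inequality through $I^\omega g$ and Theorem~\ref{prop:Errorestimate1dFCC} with $s=1$ (respectively $s=2$), together with the direct computation at $\ell=1$. Your handling of the endpoint cases $\ell=1,2$ and of the constant $C_p'$ matches the paper's treatment.
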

\begin{proof}
(i) For $\omega \not = 0$. we use  integration by parts to obtain
\begin{align*}
I^{\omega} g   \ = \  \int_{-1}^{1} g(y) \exp(\ri \omega y)\rd y
\ =\ \frac{1}{\ri \omega}[g(y)\exp(\ri \omega y)]_{-1}^{1} - \frac{1}{\ri \omega}\int_{-1}^{1} g'(y)\exp(\ri \omega y)\rd y.
\end{align*}
Thus
\begin{align*}
  |I^{\omega} g |  \ \leq\  \frac{2}{|\omega|}||g||_{\infty} + \frac{2}{|\omega|}||g'||_{\infty} \ \leq\  \frac{4}{|\omega|} \, ||g||_{\cW^{1,1}}.
\end{align*}
On the other hand, a direct estimate yields $ \vert I^{\omega} g \vert \leq 2 \Vert g \Vert_{\infty}$ for all $\omega$, and part (i) follows.

(ii)  For $\ell \ge 2$, by the definition of $D^\ell$,  Proposition \ref{prop:simple} and  Theorem \ref{prop:Errorestimate1dFCC} (with $s=1$),
\begin{align}
  \left\vert \int_{-1}^1 (D^\ell \hg) (y) \exp(\ri k \ta y) \rd y\right\vert  \ & = \left\vert \int_{-1}^1  (Q^\ell - Q^{\ell-1}) \hg (y) \exp(\ri k \ta y) \rd y \right\vert \nonumber \\
                                                                          & \  = \
                                                                            |{I^{\omega,\ell}g}-{I^{\omega,\ell-1}g}| \ \leq \  |I^{\omega}g-{I^{\omega,\ell}g}|+|I^{\omega}g-{I^{\omega,\ell-1}g}| \label{interim} \\
                                                                                &\leq C_p\min\{1,|\omega|^{-1}\} \left[\left(\frac{1}{n_\ell}\right)^{p-1}  +
                                                                                  \left(\frac{1}{n_{\ell-1}}\right)^{p-1}\right]\, \Vert g\Vert _{\cW^{p,1}}\nonumber \\
  &= C_p\min\{1,|\omega|^{-1}\} \left[
                  \left(\frac{1}{n_{\ell-1}}\right)^{p-1} (2^{1-p} + 1)\right]\, \Vert g\Vert _{\cW^{p,1}} \nonumber \\
&\leq 2 C_p\min\{1,|\omega|^{-1} \}  \left(\frac{1}{n_{\ell-1}}\right)^{p-1} \Vert g\Vert _{\cW^{p,1}}.
\end{align}
For $\ell = 1$, we use  Proposition \ref{prop:simple} to obtain, 
\begin{equation*}
  \left\vert \int_{-1}^1 (D^1 \hg) (y) \exp(\ri k \ta y) \rd y\right\vert  = |{I^{\omega,1}g}|\  = \left\{ \begin{array}{ll}  | g(0) | \left\vert \int_{-1}^1 \exp(\ri \omega y) dy \right\vert & \text{for} \quad  |\omega| \ge 1, \\
2 |g(0)|,  &  \text{for} \quad  |\omega| < 1, \end{array} \right.
\end{equation*}
which gives $ \left\vert \int_{-1}^1 (D^1 \hg) (y) \exp(\ri k \ta y) \rd y\right\vert \le 2 \min\{1, |\omega|^{-1}\} || g ||_{\infty} $. The last two estimates yield \eqref{1dgeq1}.


 To obtain \eqref{1dgeq3} we proceed as in the case $\ell \geq 2$, but when  estimating \eqref{interim} we can  use  Theorem \ref{prop:Errorestimate1dFCC} with $s=2$ instead of $s=1$.
\end{proof}

\begin{theorem} \label{thm:main}
Let $C_p'$ be as in Lemma \ref{lem:1} (ii)
and,  for $\ba \in \bR^d$, set  $\omega_j = k a_j$. Then
for fixed $p >  1$, $r \geq 1$, and $f \in \cW^{p,d}$,
\begin{align}
\label{R}
\vert \cI^{k,d,\ba} f - \cI^{k,d,\ba,r} f \vert \ \leq \
   \prod _{j=1}^d
  \left(C_p' \min\{ 1, | \omega_j |^{-1} \} \right)\, \left(\begin{array}{c} r+d-1\\ d-1 \end{array} \right)\,  2 ^{-(r-d) (p-1)} \Vert f \Vert_{\cW^{p,d}},
\end{align}
where $\min\{ 1, |\omega_j|^{-1}\}:= 1 $ when $\omega_j = 0$.
\end{theorem}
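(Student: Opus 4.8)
The plan is to recast both quantities as tensor products of one-dimensional functionals and then induct on the dimension $d$. Writing $J_j g := I^{\omega_j} g = \int_{-1}^1 g(y)\exp(\ri\omega_j y)\,\rd y$ for the exact one-dimensional oscillatory integral in coordinate $j$ and $J_j^\ell g := I^{\omega_j,\ell} g$ for its level-$\ell$ approximation from Definition \ref{def:quad}, the factorisation $\exp(\ri k\ba\cdot\by)=\prod_j\exp(\ri\omega_j y_j)$ gives $\cI^{k,d,\ba} f=(J_1\otimes\cdots\otimes J_d)f$, while Proposition \ref{prop:FCCS_alt} identifies $\cI^{k,d,\ba,r} f$ with the Smolyak combination $\mathcal{S}^r_d$ of the products $(J_1^{\ell_1}\otimes\cdots\otimes J_d^{\ell_d})$ applied to $f$. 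Setting $\delta_j^\ell := J_j^\ell-J_j^{\ell-1}$ (with $J_j^0:=0$) and $R_j^\ell := J_j-J_j^\ell$, Proposition \ref{prop:simple} lets Lemma \ref{lem:1}(ii) bound $|\delta_d^\ell g|$ by $C_p'\min\{1,|\omega_d|^{-1}\}(1/n_{\ell-1})^{p-1}\|g\|_{\cW^{p,1}}$, Theorem \ref{prop:Errorestimate1dFCC} bounds $|R_d^r g|$ by $C_p\min\{1,|\omega_d|^{-1}\}(1/n_{r})^{p-1}\|g\|_{\cW^{p,1}}$, and Lemma \ref{lem:1}(i) bounds $|J_j g|$ by $4\min\{1,|\omega_j|^{-1}\}\|g\|_{\cW^{1,1}}$; crucially all three prefactor constants are $\le C_p'$.

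First I would record the recursion driving the induction. Splitting off the last coordinate in the index set $\Lambda(r+d-1,d)$ gives $\mathcal{S}^r_d=\sum_{\ell=1}^r\delta_d^\ell\otimes\mathcal{S}^{\,r-\ell+1}_{d-1}$; since $J_d=\sum_{\ell\ge1}\delta_d^\ell$ (convergent for $g\in\cW^{p,1}$, $p>1$, by Theorem \ref{prop:Errorestimate1dFCC}) with $\sum_{\ell>r}\delta_d^\ell=R_d^r$, the error functional $E_d^r:=\cI^{k,d,\ba}-\cI^{k,d,\ba,r}$ satisfies
\begin{align*}
E_d^r=\sum_{\ell=1}^r\delta_d^\ell\otimes E_{d-1}^{\,r-\ell+1}\;+\;R_d^r\otimes\big(J_1\otimes\cdots\otimes J_{d-1}\big).
\end{align*}
The decisive structural point is that the leading behaviour in the last variable is isolated in the term $R_d^r\otimes(J_1\otimes\cdots\otimes J_{d-1})$, which must be estimated by the \emph{sharp} direct bound of Theorem \ref{prop:Errorestimate1dFCC} rather than by a telescoping sum of differences $\sum_{\ell>r}\delta_d^\ell$: the latter would introduce a spurious factor $(1-2^{-(p-1)})^{-1}$ and already fail to reproduce the stated constant when $d=1$.

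Next I would prove the tensorisation estimate that makes each summand factor. For a one-dimensional functional $\Phi$ acting in $y_d$, the fact that $\Phi$ is a finite combination of point evaluations and an integration in $y_d$ means that $\partial_{\by^*}^{\mathbf{s}^*}$ commutes with $\Phi$ for every multi-index $\mathbf{s}^*$ in the remaining variables $\by^*=(y_1,\ldots,y_{d-1})$; hence the map $\by^*\mapsto\Phi[f(\by^*,\cdot)]$ lies in $\cW^{p,d-1}$ with $\|\,\Phi[f(\by^*,\cdot)]\,\|_{\cW^{p,d-1}}\le\|\Phi\|\,\|f\|_{\cW^{p,d}}$, where $\|\Phi\|$ is the one-dimensional operator norm. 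Here one uses precisely that $\cW^{p,d}$ controls every mixed derivative $\partial_{\by}^{\mathbf{s}}f$ with $|\mathbf{s}|_\infty\le p$. Applying this with $\Phi\in\{\delta_d^\ell,R_d^r\}$, and then the inductive estimate on the $(d-1)$-dimensional functionals $E_{d-1}^{\,m}$ and $J_1\otimes\cdots\otimes J_{d-1}$, reduces the whole bound to a scalar inequality.

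Finally I would run the induction on $d$. The base case $d=1$ is exactly $E_1^r=R_1^r$, handled directly by Theorem \ref{prop:Errorestimate1dFCC}. Assuming the target bound with $\binom{m+d-2}{d-2}2^{-(m-d+1)(p-1)}$ in dimension $d-1$ for all levels $m$, inserting the three one-dimensional estimates into the recursion and extracting the common factor $\prod_{j=1}^d C_p'\min\{1,|\omega_j|^{-1}\}$ reduces the claim to
\begin{align*}
\sum_{\ell=1}^r\Big(\tfrac{1}{n_{\ell-1}}\Big)^{p-1}\binom{r-\ell+d-1}{d-2}2^{-(r-\ell-d+2)(p-1)}+2^{-(r-1)(p-1)}\ \le\ \binom{r+d-1}{d-1}2^{-(r-d)(p-1)}.
\end{align*}
Re-indexing by $m=r-\ell+1$, the geometric factors for $\ell\ge2$ collapse to the single power $2^{-(r-d)(p-1)}$, the hockey-stick identity turns $\sum_{\ell=2}^r\binom{r-\ell+d-1}{d-2}$ into $\binom{r+d-2}{d-1}-1$, and Pascal's rule $\binom{r+d-1}{d-1}=\binom{r+d-2}{d-1}+\binom{r+d-2}{d-2}$ reduces everything to the elementary inequality $\binom{r+d-2}{d-2}\big(2^{-(p-1)}-1\big)\le 1-2^{-(d-1)(p-1)}$, whose left side is non-positive and right side non-negative. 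I expect the combinatorial bookkeeping in this last step, together with the need to treat the top-level term by the sharp one-dimensional estimate (not by telescoping), to be the main obstacle; the tensorisation, though notationally heavy, is routine once the bounded-mixed-derivative structure of $\cW^{p,d}$ is exploited.
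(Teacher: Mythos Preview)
Your proof is correct, but it organizes the induction differently from the paper. The paper splits the error in dimension $d+1$ as $T_1+T_2$ with
\[
T_1=\big(\cI^{k,d,\ba^*}-\cI^{k,d,\ba^*,r}\big)F^{\omega_{d+1}}=E_d^r\otimes J_{d+1},
\qquad
T_2=\sum_{\bell^*\in\Lambda(r+d-1,d)}(D^{\ell_1^*}\!\otimes\cdots\otimes D^{\ell_d^*})\otimes R_{d+1}^{\,r+d-|\bell^*|},
\]
so the inductive hypothesis is invoked only at the single level $r$ (for $T_1$), while $T_2$ is handled via a \emph{separate} inductive bound on $\big|\int(D^{\ell_1^*}\!\otimes\cdots\otimes D^{\ell_d^*})\widehat G\,\exp(\ri k\tba^*\!\cdot\by^*)\big|$ (the paper's inequality \eqref{induction}) combined with Theorem~\ref{prop:Errorestimate1dFCC} for the one-dimensional remainder. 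The two pieces are then recombined through Pascal's rule $\binom{r+d-1}{d}+\binom{r+d-1}{d-1}=\binom{r+d}{d}$ alone.

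By contrast you peel off only the last difference operator, obtaining
\[
E_d^r=\sum_{\ell=1}^r E_{d-1}^{\,r-\ell+1}\otimes\delta_d^\ell+(J_1\otimes\cdots\otimes J_{d-1})\otimes R_d^r,
\]
invoke the inductive hypothesis at \emph{every} level $1,\ldots,r$, bound the tail by the product of exact integrals, and close with hockey-stick plus Pascal. Both decompositions telescope the same identity and both yield the stated bound with the same constant. Your version avoids the auxiliary estimate \eqref{induction} and keeps a single induction, at the price of a slightly longer combinatorial step; the paper's version isolates the contribution of each $D^{\ell_j^*}$ factor, which is precisely what is needed for the sharpened $k^{-(d+1)}$ estimate in Theorem~\ref{thm:main_expk} (where individual factors of $|\omega_j|^{-1}$ must be upgraded to $|\omega_j|^{-2}$). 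Your observation that $R_d^r$ must be bounded directly by Theorem~\ref{prop:Errorestimate1dFCC} rather than by summing $\sum_{\ell>r}\delta_d^\ell$ is exactly right and is the point that makes the constant come out as $C_p'$ rather than $C_p'(1-2^{-(p-1)})^{-1}$.
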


\begin{proof}

This is proved by induction
on  $d$.  The argument follows that of \cite{WaWo:95} and \cite{GeGr:98}, although these references analysed  tensor versions of standard quadrature rules and not Filon rules for oscillatory integrals, as considered here.

Note that for $d=1$, we have, by  \eqref{highDoscIntegralEq} and \eqref{1DoscIntegralEq},
$ \cI^{k,1,\ba} f = I^{\omega_1} f$
with $\omega_1 = ka_1 $.
By \eqref{FCCSmolyak_intro} with $d = 1$, then \eqref{eq:34a} and    Proposition \ref{prop:simple},   we have
$ \cI^{k,1,\ba,r} f \ = \ I^{\omega_1,r} f . $
Then applying Theorem  \ref{prop:Errorestimate1dFCC} with $s = 1$, we have
\begin{align} \label{1d} \vert \cI^{k,1,\ba} f -  \cI^{k,1,\ba, r} f \vert \ = \  \vert I^{\omega_1} f -  I^{\omega_1,r} f \vert \ \leq \ C_p \min\{1,|\omega_1|^{-1} \} \left( \frac{1}{n_r} \right)^{p-1} \Vert f \Vert_{\cW^{p,1}}.  \end{align}
Since $C_p \leq C_p'$, this yields   \eqref{R} for $d = 1$.


Now suppose \eqref{R} holds for  dimension $d$ and consider dimension $d+1$.
We adopt the notation used in the proof of Proposition \ref{prop:FCCS_alt}, and,  for $\bell \in \bN^{d+1}$, we write $\bell = (\bell^*, \ell_{d+1})$ with $\bell^* \in \bN^d$. Also, recalling \eqref{1DoscIntegralEq}, we
introduce, for $f \in C([-1,1]^{d+1})$,
\begin{align} F^{\omega} (\by^*) := I^{\omega} [f(\by^*, \cdot)]. \label{Fomega}
\end{align}
We then  estimate  the error by the sum of two terms:
\begin{align}
\vert \cI^{k,d+1,\ba} f - \cI^{k,d+1,\ba,r} f \vert & \ \le \ \vert \cI^{k,d+1,\ba} f - \cI^{k,d,\ba^*,r} F^{\omega_{d+1}} \vert
+ \vert \cI^{k,d,\ba^*,r} F^{\omega_{d+1}}  - \cI^{k,d+1,\ba,r} f \vert \nonumber \\
& \ =: \ \vert T_1 \vert + \vert T_2 \vert.
\label{E1}
\end{align}

Considering  $\vert T_1 \vert$ first, we  use  the equality $\cI^{k,d+1,\ba} f = \cI^{k,d, \ba^*} {F^{\omega_{d+1}}} $ and the inductive hypothesis  to obtain
\begin{align}
\vert T_1\vert \ & = \left\vert \left(\cI^{k,d,\ba^*} - \cI^{k,d,\ba^*,r} \right) {F^{\omega_{d+1} }}  \right\vert \nonumber \\
& \leq \ 
                                                                                                                              \left(\begin{array}{c} r+d-1\\ d-1 \end{array} \right)
\prod_{j=1}^d
  \left({C_p'} \min\{1, |\omega_j|^{-1} \} \right)\, 2^{-(r-d)(p-1)} \Vert F^{\omega_{d+1}} \Vert_{\cW^{p,d}}.
\label{eq:Idag}   
\end{align}
Then, using Lemma \ref{lem:1} (i) and a little manipulation one can see that
\begin{align} \label{manip}\Vert F^{\omega_{d+1}} \Vert_{W^{p,d}} \ \leq \ 4 \min\{ 1, \vert \omega_{d+1}\vert^{-1}\}  \, \Vert f \Vert_{W^{p,d+1}} \leq \ {C^{\prime}_p} \min\{ 1, \vert \omega_{d+1}\vert^{-1}\}  \, \Vert f \Vert_{W^{p,d+1}}. \end{align}
Hence, using this in \eqref{eq:Idag} we obtain
\begin{align}
\vert T_1\vert \ 
& \leq  \left(\begin{array}{c} r+d-1\\ d-1 \end{array} \right) \, \prod_{j=1}^{d+1}
\left(C_p' \min\{1, |\omega_j|^{-1} \} \right)\, 2^{-(r-d)(p-1)} \Vert f \Vert_{\cW^{p,d+1}}.
\label{G}
\end{align}

To estimate  $T_2$, we first apply  \cite[eq. (11)]{WaWo:95}, to write
\begin{align*} 
\cQ^{r,d+1} \ &=\  \sum_{\bell \in \Lambda(r+d,d+1)}  \ ({D}^{\ell_1}\otimes...\otimes {D}^{\ell_{d+1}})
\                              = \ \sum_{\bell^* \in \Lambda(r+d-1,d)}  \ ({D}^{\ell^*_1}\otimes...\otimes {D}^{\ell^*_{d}}) \otimes Q^{r + d - \vert \bell^* \vert} .
\end{align*}
Then, {with notation as in \eqref{dvariate}} and proceeding as in \eqref{insert},  we have
\begin{align}
 \cI^{k,d+1, \ba,r} f
&=  \int_{[-1,1]^{d+1}}  (\cQ^{r,d+1} \hf)(\by) \exp(\ri k \tba \cdot \by) \rd \by \nonumber \\
& \mbox{\hspace{-1.5cm}} =  \sum_{\bell^* \in \Lambda(r+d-1,d)} \int_{[-1,1]^d}   \ ({D}^{\ell^*_1}\otimes...\otimes {D}^{\ell^*_{d}}) \widehat{F}^{ \omega_{d+1}, r + d - \vert \bell^* \vert } (\by^*) \exp(\ri k \tba^* \cdot \by^*) \rd \by^*.  \label{F1}
\end{align}
 Also, by \eqref{FCCSmolyak_intro} and subsequently  \eqref{FCCSmolyak-dD},   we have
  \begin{align}
    \cI^{k,d,\ba^*,r} F^{\omega_{d+1}} & = \int_{[-1,1]^d}({\cQ^{r,d}} \widehat{F}^{\omega_{d+1}})(\by^*) \exp(\ri k \widetilde{\ba}^*\cdot \by^*) \rd \by^*\nonumber  \\ & = \sum_{\bell^* \in \Lambda(r+d-1,d)} \int_{[-1,1]^d}   \ ({D}^{\ell^*_1}\otimes...\otimes {D}^{\ell^*_{d}}) \widehat{F}^{ \omega_{d+1}} (\by^*) \exp(\ri k \tba^* \cdot \by^*) \rd \by^*.
                                                                                                                                                                        \label{F1a} \end{align}
                                                                                                                                                                      Thus,  combining \eqref{F1} and \eqref{F1a}, we obtain
\begin{align}
T_2 & = \cI^{k,d,\ba^*,r} F^{\omega_{d+1}} - \cI^{k,d+1,\ba,r} f \nonumber \\
& \mbox{\hspace{-0.8cm}} =  \sum_{\bell^* \in \Lambda(r+d-1,d)}  \int_{[-1,1]^d}  ({D}^{\ell^*_1}\otimes...\otimes {D}^{\ell^*_{d}})  \left( \widehat{F}^{\omega_{d + 1}} - \widehat{F}^{\omega_{d + 1}, r + d - \vert \bell^* \vert }\right) (\by^*)  \exp(\ri k \tba^* \cdot \by^*) \rd \by^*.  
 \label{E0}
\end{align}

Now, to  estimate $\vert T_2 \vert$, we consider any function  {$G \in \cW^{p,d}$,  any $\ba \in \mathbb{R}^d$ and $k > 0$, and define  $\widehat{G}(\by) = G(\by) \exp(\ri k \hba. \by) $, with $\hba$ as defined in Notation \ref{not:tilde} as in
  \eqref{highDoscIntegralEq1}.}
By induction on dimension $d$,  using Lemma \ref{lem:1} (ii) at each step,
we obtain   the estimate
\begin{align}
  & \left \vert \int_{[-1,1]^d}  ({D}^{\ell^*_1}\otimes...\otimes {D}^{\ell^*_{d}})  \,    \widehat{G} (\by^*)  \exp(\ri k \tba^* \cdot \by^*) \rd \by^* \right\vert \nonumber \\
  &   \mbox{\hspace{3cm}} \leq \ \prod_{j=1}^d  \left( {C_p'} \min\{ 1, \vert \omega_j\vert^{-1} \} \left( \frac{1}{n_{\ell_j^*-1}} \right)^{p-1} \right) \Vert G \Vert_{\cW^{p,d}}      \nonumber \\
  & \mbox{\hspace{3.5cm}} \leq  \   \left[\prod_{j=1}^d
    \left( C_p' \min\{ 1, \vert \omega_j\vert^{-1} \}  \right) \right]  2^{(2d -\vert \bell^*\vert )(p-1)}\,   \Vert G \Vert_{\cW^{p,d}} , \label{induction} \end{align}
where,  in the last step,  we used the fact that
$$
\prod_{j=1}^d \frac{1}{n_{\ell_j^* - 1}} \leq \prod_{j=1}^d 2^{2-\ell_j^*} = 2^{2d - \vert \bell^*\vert }.
$$

Thus, to complete the estimate of  \eqref{E0}  we need to estimate $\Vert F^{\omega_{d+1}} - F^{\omega_{d+1}, r + d - \vert \bell^*\vert} \Vert_{\cW^{p,d}}$. Recalling that $F^{\omega}$ is given by \eqref{Fomega} and $F^{\omega,\ell}$ is given in \eqref{dvariate}, we apply   Theorem  \ref{prop:Errorestimate1dFCC} with $s = 1$ to   obtain, for any $j = 1, \ldots, d$, and any $q = 0, \ldots, p$,
\begin{align*} 
     \left\vert \partial_j^q ({F^{\omega_{d+1}} - F^{\omega_{d+1}, r + d - \vert \bell^*\vert}})(\by^*) \right\vert &=
           \left\vert \left(I^{\omega_{d+1}} - I^{\omega_{d+1}, r + d - \vert \bell^* \vert} \right)\left[(\partial_j^q f)(\by^*, \cdot)\right]  \right\vert \nonumber \\
&   \mbox{\hspace{-2cm}}\leq C_p \min\{1, |\omega_{d+1}|^{-1} \}
    \, 2^{-(r+d-\vert \bell^* \vert - 1 ) (p-1)}\,  \Vert (\partial_j^q f)(\by^*, \cdot) \Vert_{\cW^{p,1}} ,
\end{align*}
 {(where we used the fact  that $r+d-\vert \bell^*\vert \geq 1$ in \eqref{E0}, because $\bell^* \in \Lambda(r+d-1,d)$)}. Hence    \begin{align} \label{E4}
        \Vert F^{\omega_{d+1}} - F^{\omega_{d+1}, r + d - \vert \bell^*\vert} \Vert_{\cW^{p,d}} \leq \ C_p \min\{1, |\omega_{d+1}|^{-1} \}
       \,  2^{-(r+d-\vert \bell^* \vert - 1 ) (p-1)} \, \Vert f \Vert_{\cW^{p,d+1}} .
        \end{align}
Combining \eqref{E0}, \eqref{induction} and  \eqref{E4}, using $C_p \leq C_p'$  and then   the cardinality formula  \eqref{card}, we obtain
\begin{align}
\vert T_2 \vert & \leq \ \left[\prod_{j=1}^{d+1}
\left(C_p^{\prime} \min\{1, |\omega_j|^{-1} \} \right) \right] \, \left(\sum_{\bell^* \in \Lambda(r+d-1,d)} \left(2^{2d -\vert \bell^* \vert } 2^{-(r+d- \vert \bell^* \vert - 1 )} \right)^{p-1} \right) \, \Vert f \Vert_{\cW^{p,d+1}}\nonumber \\
& = \left(\begin{array}{c} r+d-1\\ d \end{array} \right) \left[\prod_{j=1}^{d+1}
\left(C_p^{\prime} \min\{1, |\omega_j|^{-1} \} \right)\right] \,  2^{-(r - d - 1)(p-1)} \, \Vert f \Vert_{\cW^{p,d+1}}.
\label{E4a}
\end{align}

Then,  combining \eqref{G}, \eqref{E4a} and using  the elementary identity
$ \binom{q}{d}+\binom{q}{d-1}=\binom{q+1}{d} $, with $q=r+d-1$, we have shown that
the estimate \eqref{R} holds for dimension $d+1$.
\end{proof}

\begin{remark} \label{rem:highernegative}
  \begin{itemize} \item[(i)]   For those values of $\bell^*$ satisfying  $r + d - \vert \bell^*\vert \geq 2$, {an application of Theorem \ref{prop:Errorestimate1dFCC} shows that  \eqref{E4} holds with $\vert \omega_{d+1}\vert^{-1}$ replaced by $\vert \omega_{d+1}\vert^{-2}$ and $\cW^{p,d+1}$ replaced by $\cW^{p+3,d+1}$}.
  \item[(ii)] If any $\ell_j^* \geq 3$ then {(for that particular $j$),  an application of Theorem \ref{lem:1} (ii),  shows that   $\vert \omega_{j}\vert^{-1}$ can be replaced by $\vert \omega_{j}\vert^{-2}$ and $\cW^{p,d}$ replaced by $\cW^{p+3,d}$ in \eqref{induction}}.
    \end{itemize}
  \end{remark}

While Theorem \ref{thm:main} gives an estimate for the error which is explicit in $\ba$ and $k$, the following theorem gives a simpler (and higher order in $k^{-1}$)  at the cost of   a  stronger regularity requirement.
\begin{theorem}\label{thm:main_expk}
   Let    $\ba \in \mathbb{R}^d$ with $\ba_j \not = 0$ for each $j$. Let   $p \geq 1 $ and $d \geq 1$. Then there exists a constant $C_{p,d,\ba}$ such that, for all {$r \geq d + 1 $ and $f \in \cW^{p+3,d}$},
\begin{align}
\label{R1}
\vert \cI^{k,d,\ba} f - \cI^{k,d,\ba,r} f \vert \ \leq \
 C_{p,d,\ba}  \, \ k^{-(d + 1)} \, \left(\begin{array}{c} r+d-1\\ d-1 \end{array} \right)  \, 2 ^{-(r-d) (p-1)} \Vert f \Vert_{\cW^{p+3,d}}.
\end{align}

\end{theorem}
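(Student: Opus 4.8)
The plan is to prove \eqref{R1} by induction on $d$, following the architecture of the proof of Theorem \ref{thm:main} but systematically exploiting the sharper one-dimensional bounds recorded in Remark \ref{rem:highernegative} so as to extract one additional negative power of $k$ in every term. Throughout I will use the elementary inequality $\min\{1,|\omega_j|^{-s}\}\le |a_j|^{-s}k^{-s}$, valid for every $k>0$ and $s\in\{1,2\}$ precisely because $a_j\neq 0$ (if $k|a_j|\ge 1$ this is an equality up to the factor $|a_j|^{-s}$, and if $k|a_j|<1$ the left side is $1$ while the right side exceeds $1$). This converts each $\min\{\cdot\}$ factor into a clean power of $k$, with the constants $|a_j|^{-s}$, the Bell-number constants from Theorem \ref{prop:Errorestimate1dFCC}, and the combinatorial constants all absorbed into $C_{p,d,\ba}$; in particular no separate treatment of small $k$ is needed. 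For the base case $d=1$ the hypothesis reads $r\ge 2$, so I apply Theorem \ref{prop:Errorestimate1dFCC} with $s=2$ to $I^{\omega_1}f-I^{\omega_1,r}f$, obtaining $C_p\min\{1,|\omega_1|^{-2}\}(1/n_r)^{p-1}\Vert f\Vert_{\cW^{p+3,1}}$; bounding $\min\{1,|\omega_1|^{-2}\}\le|a_1|^{-2}k^{-2}$ and using $\binom{r}{0}=1$, $(1/n_r)^{p-1}=2^{-(r-1)(p-1)}$ gives exactly \eqref{R1} for $d=1$.

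For the inductive step I assume \eqref{R1} in dimension $d$ (for all $r\ge d+1$) and establish it in dimension $d+1$ for $r\ge d+2$, reusing the splitting $|\cI^{k,d+1,\ba}f-\cI^{k,d+1,\ba,r}f|\le|T_1|+|T_2|$ of \eqref{E1}. For $T_1=(\cI^{k,d,\ba^*}-\cI^{k,d,\ba^*,r})F^{\omega_{d+1}}$ I apply the inductive hypothesis (legitimate since $r\ge d+2\ge d+1$), which already carries the factor $k^{-(d+1)}$, and then estimate $\Vert F^{\omega_{d+1}}\Vert_{\cW^{p+3,d}}\le C\min\{1,|\omega_{d+1}|^{-1}\}\Vert f\Vert_{\cW^{p+3,d+1}}\le C|a_{d+1}|^{-1}k^{-1}\Vert f\Vert_{\cW^{p+3,d+1}}$ by Lemma \ref{lem:1}(i), exactly as in \eqref{manip}; the product yields the desired $k^{-(d+2)}$. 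For $T_2$ I use its explicit representation \eqref{E0} and estimate each summand indexed by $\bell^*\in\Lambda(r+d-1,d)$ so as to gain the extra power of $k$ from \emph{some} one of the $d+1$ coordinate directions: if $r+d-|\bell^*|\ge 2$ I upgrade the $(d+1)$-st direction to $|\omega_{d+1}|^{-2}$ by Remark \ref{rem:highernegative}(i) (sharpening \eqref{E4}); otherwise $r+d-|\bell^*|=1$, i.e. $|\bell^*|=r+d-1$.

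The crux, and the step I expect to be the main obstacle, is showing that \emph{every} summand of $T_2$ admits one of these two upgrades, i.e. that no term is simultaneously too coarse in all $d$ directions of $\bell^*$ and at level one in the $(d+1)$-st. This is exactly where the hypothesis $r\ge d+1$ (here $r\ge d+2$ in the step) is essential: in the residual case $r+d-|\bell^*|=1$ we have $|\bell^*|=r+d-1\ge 2d+1>2d$, so the levels $\ell_j^*$ cannot all be $\le 2$ and hence some $\ell_j^*\ge 3$, which permits upgrading that direction to $|\omega_j|^{-2}$ by Remark \ref{rem:highernegative}(ii) via \eqref{1dgeq3}. Were $r$ only $d+1$ one would be left with the single exceptional configuration $\bell^*=(2,\dots,2)$ (with $(d+1)$-st level one), which yields only $k^{-(d+1)}$ and breaks the argument; the constraint $r\ge d+2$ removes it.

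It remains routine bookkeeping inherited from Theorem \ref{thm:main}. Both upgrades leave the powers of $2$ and the index set unchanged, so each $\bell^*$-summand carries the same factor $2^{(2d-|\bell^*|)(p-1)}2^{-(r+d-|\bell^*|-1)(p-1)}=2^{-(r-d-1)(p-1)}$, independent of $\bell^*$; summing with the cardinality formula \eqref{card} reproduces the $T_2$ bound of Theorem \ref{thm:main} with one factor $\min\{1,|\omega_j|^{-1}\}$ replaced by $\min\{1,|\omega_j|^{-2}\}$ and $\cW^{p,d+1}$ replaced by $\cW^{p+3,d+1}$, which after the inequality $\min\{1,|\omega_j|^{-s}\}\le|a_j|^{-s}k^{-s}$ gives an overall $k^{-(d+2)}$ times $\binom{r+d-1}{d}2^{-(r-d-1)(p-1)}\Vert f\Vert_{\cW^{p+3,d+1}}$. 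Finally I combine the $T_1$ and $T_2$ bounds: after replacing the smaller factor $2^{-(r-d)(p-1)}$ in the $T_1$ estimate by $2^{-(r-d-1)(p-1)}$, the identity $\binom{q}{d}+\binom{q}{d-1}=\binom{q+1}{d}$ with $q=r+d-1$ merges $\binom{r+d-1}{d-1}+\binom{r+d-1}{d}=\binom{r+d}{d}$, producing precisely the dimension-$(d+1)$ form of \eqref{R1} and closing the induction.
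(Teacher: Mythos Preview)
Your proof is correct and follows essentially the same route as the paper's: induction on $d$, base case via Theorem~\ref{prop:Errorestimate1dFCC} with $s=2$, inductive step via the $T_1$/$T_2$ splitting, and the same case analysis on $r+d-|\bell^*|$ using Remark~\ref{rem:highernegative}(i),(ii) to upgrade one factor from $|\omega_j|^{-1}$ to $|\omega_j|^{-2}$. Your explicit inequality $\min\{1,|\omega_j|^{-s}\}\le |a_j|^{-s}k^{-s}$ (valid for all $k>0$ since $a_j\neq 0$) is a useful clarification that the paper glosses over with ``we assume $k$ is sufficiently large''; it shows cleanly that the bound \eqref{R1} indeed holds for every $k>0$, with the $|a_j|^{-s}$ factors absorbed into $C_{p,d,\ba}$.
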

\begin{proof}
  We follow  the  proof of Theorem \ref{thm:main}, indicating differences only briefly.
  This proof  is less technical since 
  we do not
  require explicitness with respect to $\ba$. Throughout the proof $C_{p,d,\ba}$ denotes a generic constant which may depend on $p,d,\ba$, {and whose value may vary from line to line}.

  \bigskip

\noindent{\bf Step 1} \   For the  case  $d = 1$, we have  $r \geq 2$. Then we  recall \eqref{1d},  but this time we use Theorem \ref{prop:Errorestimate1dFCC} with $s = 2$  to obtain    $$ \vert \cI^{k,1,\ba} f -  \cI^{k,1,\ba, r} f \vert \ = \  \vert I^{\omega_1} f -  I^{\omega_1,r} f \vert \ \leq \ C_p (k\vert a \vert)^{-2}  \left( \frac{1}{n_r} \right)^{p-1} \Vert f \Vert_{\cW^{p+3,1}},   $$
which yields    \eqref{R1} for $d = 1$.

\bigskip

\noindent
{\bf Step 2} \ Now assuming \eqref{R1}  holds for $d$ we consider  the corresponding result for dimension $d+1$. In this case we are assuming
\begin{align} \label{assume}
	r \geq d+2.
\end{align}
Again we introduce the splitting  \eqref{E1}.

                                                                                        \bigskip
                                                                                        \noindent {\bf Step 2a}\
Analogously to  \eqref{eq:Idag} and \eqref{manip}  we obtain, via the inductive hypothesis,
\begin{align}
  \vert T_1\vert \
&   \leq \ 
                                                                                           C_{p,d,\ba}\,                      k^{-(d+1)} \,                \left(\begin{array}{c} r+d-1\\ d-1 \end{array} \right)
\, 2^{-(r-d)(p-1)} \Vert F^{\omega_{d+1}} \Vert_{\cW^{p+3,d}} \nonumber \\ & \leq \ C_{p,d,\ba}  \, k^{-(d+2)} \, \left(\begin{array}{c} r+d-1\\ d-1 \end{array} \right)  2^{-(r-d)(p-1)} \Vert f \Vert_{\cW^{p+3,d+1}},
\label{eq:Idag1}
\end{align}
where the additional power of $k$ comes from the estimate of $\Vert F^{\omega_{d+1}}\Vert_{\cW^{p+3, d+1}}$ -- analgous to  \eqref{manip}.

\noindent {\bf Step 2b}\
The estimate of  $\vert T_2\vert$ (starting from    \eqref{E0}) is slightly more complicated.
Note first that \eqref{E0}  is a sum of terms, each  corresponding to a different choice of $\bell^*$. In all cases $r+d-\vert \bell^*\vert \geq 1$. The key  estimates for  each of the summands in \eqref{E0} are given in \eqref{induction} and \eqref{E4}.  In the following discussion we will discuss only asymptotic decay as $k\rightarrow \infty$ of the terms in    \eqref{induction} and \eqref{E4}, the dependence on  other variables is the same as in the proof of Theorem  \ref{thm:main}.

If, in fact,  $r+d-\vert \bell^*\vert \geq 2$ then, via Remark \ref{rem:highernegative} (i),  we obtain
$$ \Vert F^{\omega_{d+1}} - F^{\omega_{d+1}, r + d - \vert \bell^*\vert} \Vert_{\cW^{p,d}}\ \leq\
C_{p,d,\ba} \, k^{-2} \,  2^{-(r+d - \vert \bell^*\vert -1)(p-1)} \Vert f \Vert_{\cW^{p+3,d+1}}, $$
i.e., one additional negative power of $k$ compared with \eqref{E4}.
In this case, the corresponding summand  in \eqref{E0} can be estimated as a product of   $d$ terms of $\mathcal{O}(k^{-1})$ (analogous to \eqref{induction}) and one  of $\mathcal{O}(k^{-2})$, yielding
$\mathcal{O}(k^{-(d+2)})$ overall for that summand.

On the other hand, if $r+d-\vert \bell^*\vert =1$,  then, by \eqref{assume},  $\vert \bell^* \vert = r+d-1 \geq 2d+1$. Since also   $\bell^* \geq \mathbf{1}$, it follows that at least one $\ell_j^*$ must be  $\geq 3$. For this $j$,  Remark \ref{rem:highernegative} (ii) can be applied. Thus \eqref{induction} has an estimate of  $\mathcal{O}(k^{-d-1})$ (or better) and combining this with the standard $\mathcal{O}(k^{-1})$ estimate for \eqref{E4}, we again obtain $\mathcal{O}(k^{-d-2})$ overall for that summand as well.

Thus overall  \eqref{E0} has an  $\mathcal{O}(k^{-(d+2)})$ estimate in the case of dimension $d+1$.
This completes the induction argument.
\end{proof}

\begin{remark} \label{relative}
The result in Theorem  \ref{thm:main_expk} is important if one is interested in computing \eqref{highDoscIntegralEq} to high \blue{normalized accuracy (in the sense of \eqref{norm_err})} for large $k$. This is because in general the oscillatory integral \eqref{highDoscIntegralEq} decays with order  $\mathcal{O}(k^{-d})$  as  $k$ increases, and Theorem \ref{thm:main_expk} shows that the \blue{normalized error} in computing this with the FCCS rule still decays with increasing $k$.
\end{remark}

We are now ready to complete the proofs of the  main theorems  - Theorem \ref{thm:main_grid} and \ref{thm:main_grid1},   stated in the Introduction.

\bigskip

\noindent
\emph{Proof of Theorems \ref{thm:main_grid}, \ref{thm:main_grid1}}.  The proofs just require estimating the  term $\displaystyle{\binom{r + d - 1}{d - 1} 2^{-(r-d)(p-1)}}$ which appears in the estimates in  Theorems \ref{thm:main} and \ref{thm:main_expk}.

The reference \cite[Lemma 4]{NoRi:99} gives an  asymptotic  formula (due to M\"{u}ller-Gronbach) for the number of nodes $N(r,d)$ used in the quadrature rule \eqref{FCCSmolyak_intro}. Using this, and a little manipulation, we obtain
$$ N(r,d) \ \approx \ \frac{1}{(d-1)! \, 2^d } \left(1 + \frac{d-1}{r}\right)^{d-1} \left[ 2^r\,  r^{d-1} \right],  $$
where $\approx$ means that the ratio of the  left-hand side and the right-hand side tends to $1$ as $r \rightarrow \infty$. Hence from this it follows that,  for sufficiently large $r$,
\begin{align}  \frac{1}{  (d-1)!\, 2^{d-1}} \left[ 2^r\,  r^{d-1}\right] \geq N(r,d) \ \geq\  \frac{1}{  (d-1)!\, 2^{d+1}} \left[ 2^r\,  r^{d-1}\right]. \label{I2}
\end{align}
In particular, for sufficiently large $r$, we have the inequalities
\begin{align}\label{I3}
  \mathrm{(i)} \quad 2^{-(r-d)} \ \leq \ \frac{2}{(d-1)! } \, \frac{r^{d-1}}{N(r,d)} \quad \text{and}\quad  \mathrm{(ii)} \quad 2^r \ \leq\  N(r,d).
\end{align}
Moreover, as is easily shown,
\begin{align*} \lim \limits_{r \rightarrow \infty} \frac{1}{r^{d - 1}} {\binom{r + d - 1}{d - 1}} =
  \frac{1}{(d - 1)!},
\end{align*}
and so, for sufficiently large $r$,  \begin{align} \label{I4} \binom{r + d - 1}{d - 1} \leq \frac{2}
                                       {(d-1)!} r^{d-1}.  \end{align}
                                     Putting together \eqref{I3} (i)  and \eqref{I4}  and then using \eqref{I3}(ii),  we obtain
                                     \begin{align*} \binom{r + d - 1}{d - 1} 2^{-(r-d)(p-1)} & \leq \left(\frac{2}{(d-1)!}\right)^p \,  (r^{d-1})^p \,  \left( \frac{1}{N(r,d)}\right)^{p-1} \\
   &    \leq \left(\frac{2}{(d-1)!\log^{d-1} 2}\right)^p  \, (\log^{d-1} N(r,d))^p \frac{1}{N(r,d)^{p-1}}  
                                     \end{align*}
                                     Combining this with Theorems \ref{thm:main},  \ref{thm:main_expk}, we complete the proof of Theorems \ref{thm:main_grid}, \ref{thm:main_grid1}.

\begin{remark}[\blue{Higher rate of decay as $k \rightarrow \infty$}] \label{higherorderk}
We remark that the better decay rate as $k \rightarrow \infty$ obtained in Theorem \ref{thm:main_grid1} could be obtained for smaller $r$ if, instead of the mid-point rule when $\ell = 1$ (see  \eqref{eq:grids}), we employ the two-point Clenshaw-Curtis rule. This fact is illustrated in Example 3 of \S \ref{sec:Numerical}.  While this observation is  useful for problems of moderate dimension $d$, it is less interesting for higher $d$ because the number of nodes used in the Smolyak interpolant grows quickly with $d$ if a  rule with more than one point is  employed at level $1$ (see the statement after equation(4) in \cite{BaNoRi:00},
  or  \cite[equation (25)]{NoRi:99}).  
\end{remark}

\begin{remark}[Relation to classical estimates for sparse grid integration]\label{rem:classical}
\blue{  As mentioned in \S \ref{sec:intro},  the result in Theorem \ref{thm:main_grid} can be improved with respect to its dependence on $N(r,d)$ (see \eqref{mainalt2}). The proof of \eqref{mainalt2} can be obtained by following the proof above of Theorem \ref{thm:main_grid}, but using the  following alternative estimate to those given in   Theorem \ref{prop:Errorestimate1dFCC}:}
\begin{align}\label{alter} \blue{\vert \cI^\omega g - \cI^{\omega, \ell} g \vert \ \leq \ C_p \, \left( \frac{1}{n_\ell}\right)^p \,  \Vert g \Vert_{\cW^{p,1}}, \quad \text{for} \quad \ell \geq 1 \quad  \text{and}  \quad \omega \in \mathbb{R} .} \end{align}
\blue{(This obtains a better power of $1/n_\ell$ at the expense of no decay with respect to $\vert \omega\vert $.) Then repeating the proof of Theorem \ref{thm:main_grid} using this estimate instead of Theorem \ref{prop:Errorestimate1dFCC}
yields a  proof of \eqref{mainalt2}. }
\end{remark}

\section{Application to a UQ problem for  the Helmholtz equation}
\label{sec:Apply_Helmholtz}
\noindent
In uncertainty quantification for problems governed by PDEs,  one typically wants to compute the output statistics (e.g. expectation or higher order moments) of  some  quantity of interest (QoI - typically a functional of the   PDE solution), given the statistical properties of the random  input data (e.g., coefficients) of the PDE.    The required output moments are usually   written as a multi-dimensional integral, with dimension determined by the number of random parameters in the model.
  For problems governed by the Helmholtz equation, the solution is usually oscillatory with respect to both the physical variable(s) and  the random  parameters.  
  -- see, e.g., \cite[Chapter 5]{Pe:20}, \cite[\S 4]{GaKuSl:21}, \cite{SpWu:22}.

In this paper  we restrict to problem   \eqref{Helmholtz_Eq_2} -- \eqref{Helmholtz_BC_2} and we
first deal with the oscillation with respect to  $x$ by
applying a   `hybrid numerical-asymptotic' method in physical space.
For each random parameter $\by$,  this yields an expression for the solution  $u(x, \by)$  (increasingly accurate   as $k$ increases) which identifies the principal oscillations with respect to $x$,  and  the only  parts to be computed numerically are smooth functions of $x$. For this Helmholtz problem, it turns out that the approximation also yields the principal
oscillations with respect to $\by$ and  integration with respect to $\by$  requires   integration of multidimensional oscillatory integrals of the type discussed earlier in this paper.   (See e.g., \cite{GrahamSpence:2012}, for a general discussion of hybrid numerical-asymptotic methods in the context of Helmholtz problems.)


Later we will study the case when $n$ is a random field, but we start here
with the deterministic case in order to explain the hybrid numerical-asymptotic method.

\subsection{The deterministic problem and its hybrid numerical-asymptotic solution}
\label{subsec: asymptotic expansion}

In this subsection we shall explain how to obtain  an  asymptotic approximation   for  the solution $u$ of \eqref{Helmholtz_Eq_2} -- \eqref{Helmholtz_BC_2} which is increasingly accurate as $k$ increases. Its implementation will involve the numerical solution of problems which are well-behaved with respect to $k$.

To motivate the idea we first consider problem \eqref{Helmholtz_Eq_2} -- \eqref{Helmholtz_BC_2} in the special case $F=0$. Then, in the \blue{constant-coefficient} case $n=1$ the solution is just a linear combination of the complementary functions $\exp(\pm \ri k x)$. When $n$ is variable,  we define
$N(x) = \int_0^x n(x') \rd x'$ and, for functions $\mu, \nu$  we  consider the `approximate complementary function':
$$r := \mu \xi + \nu \xi^{-1}, \quad \text{where} \quad \xi(x) =  \exp(\ri k N(x)),$$
with $\mu, \nu$ (as yet) unknown functions of $x$.
With this, we readily find:
\begin{align} \label{Lr} L r = \left[\mu'' + \ri k (2n \mu' + n' \mu) \right]\xi  \ + \ \left[ \nu'' - \ri k (2n \nu' + n' \nu) \right]\xi^{-1}.  \end{align}
In order to derive to find an  approximation to $u$ which is accurate for large $k$,  we introduce  the `approximate ray-expansion':
\begin{align} \label{approx_ray} \wu^{m} : = \sum_{j = {0}}^{2m} k^{-j} r_j, \quad \text{where} \quad r_j := \mu_j \xi + \nu_j \xi^{-1}, \quad \text{for} \quad m \geq {0}.  \end{align}  Then (recalling that we assumed $Lu = F = 0$), an easy calculation, using \eqref{Lr}  shows
\begin{align} L(u - \wu^m) &\ = \ {-} \ \sum_{j=0}^{2m} k^{-j}  \left[  \mu_j'' + \ri k (2n \mu_j' + n' \mu_j)
  \right]\xi 
                  \ {-}  \ \sum_{j=0}^{2m} k^{-j} \left[\nu_j'' - \ri k (2n \nu_j' + n' \nu_j) \right]\xi^{-1}. \label{Le} \end{align}
                To force $\wu^m$ to be close to $u$,  we  choose the coefficients $\mu_j$ and $\nu_j$
                so that  the right-hand side of \eqref{Le}  decays rapidly with increasing $k$. Forcing all terms
                 in \eqref{Le} up to  $\mathcal{O}(k^{-2m})$  to vanish leads to the system  of differential equations to be satisfied by $\mu_j, \nu_j$:
  \begin{align}2n \mu_j' + n' \mu_j \ &= \ \ri {\mu_{j-1}^{\prime\prime}}, \quad j = {0}, 1, \ldots ,m  \label{muj} \\2n \nu_j' + n' \nu_j \ &= \ -\ri {\nu_{j-1}^{\prime\prime}},  \quad j = {0}, 1, \ldots ,m \label{nuj}\\
    \text{with} \quad & \mu_{-1} = \nu_{-1} = 0.\label{start}    \end{align}
Then, provided we
choose the boundary conditions for $\mu_j $ and $\nu_j$ so that $u - \wu^m$ satisfies homogeneous boundary conditions and provided the regularity of $\mu_j, \nu_j$ can be controlled we can ensure (at least formally)
that $u - \wu^m  = \mathcal{O}(k^{-2m})$ for any choice of $m$.

This explains the idea in the  simple case $F=0$. Now we prove a  theorem which describes  the general case. To handle $ F \not =0$ we need to introduce the sequence
\begin{align}\label{fj} F_2 = F/n^2 \quad \text{and }  \quad F_{2j+2} = - F_{2j}'' /n^2, \quad \text{for} \quad j = 1,2, \ldots.
 \end{align}

\begin{theorem}
\label{thm: error estimate for asymptotic expanstion}
 Assume that $m \ge 1$, $F \in C^{2m}[0,1]$ and $n \in C^{2m + 2}[0,1]$. Then for $k$ sufficiently large, and for $j = 0, \ldots, 2m$, there exist unique  $\mu_j, \nu_j$ satisfying  \eqref{muj} -- \eqref{start},    together with
  the boundary conditions:
  \begin{align}
    B_L r_0 = u_L, \ & \ B_R r_0 =0 ;\label{BC0}\end{align}
  and,
  \begin{equation}\
\left.    \begin{array}{ll}
B_L r_{2j-1} = 0, \   &\  B_Rr_{2j-1}=0  \\
      B_L r_{2j} = -F_{2j} (0),\  &\ B_R r_{2j} = -B_RF_{2j}
                                    \end{array}
    \right\}, \quad \text{for} \quad  j =  1, \ldots , m.  \label{BC} \end{equation}
  Moreover, with the approximation to $u$ then defined by
  \begin{align} \label{utm} \wu^m \ := \ \sum_{j=0}^{2m} k^{-j} r_j \ + \ \sum_{j=1}^m k^{-2j} F_{2j},  \end{align}    there exists a constant $C$ {independent of $k$} such that, for {sufficiently large} $k$,
  \begin{align} \Vert u - \wu^m \Vert_{{H^1(0,1)}} \ \leq\ C k^{-2m}. \label{asym}
  \end{align}
\end{theorem}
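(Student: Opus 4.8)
The plan is to prove the three assertions in turn: existence and uniqueness (with the needed regularity) of the coefficients $\mu_j,\nu_j$, the smallness of the residual $L(u-\wu^m)$, and finally the $H^1$ bound via a $k$-uniform stability estimate for $L$.

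For existence, uniqueness and regularity of the $\mu_j,\nu_j$, I would rewrite the first-order equations \eqref{muj}--\eqref{nuj} in integrating-factor form. Since $2n\mu_j'+n'\mu_j = 2\sqrt n\,(\sqrt n\,\mu_j)'$, equation \eqref{muj} becomes $(\sqrt n\,\mu_j)' = i\mu_{j-1}''/(2\sqrt n)$, which determines $\mu_j$ up to one multiple of the homogeneous solution $1/\sqrt n$; similarly for $\nu_j$. Solving the pair recursively in the order $j=0,1,\dots,2m$ (using $\mu_{-1}=\nu_{-1}=0$ from \eqref{start}), at each level the two free constants in $r_j=\mu_j\xi+\nu_j\xi^{-1}$ are fixed by the two boundary conditions in \eqref{BC0}, \eqref{BC}. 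I would check that the resulting $2\times2$ system for these constants is nonsingular for large $k$: since $\xi(0)=1$ the $B_L$ row is $k$-independent, while the $B_R$ row has dominant entries of size $k$ carrying factors $(n(1)\mp n_\infty)\xi(1)^{\pm1}/\sqrt{n(1)}$, so with $\xi(1)=e^{i\theta}$ the determinant is asymptotically $k\bigl(2n(1)\cos\theta-2in_\infty\sin\theta\bigr)$, whose modulus is bounded below by $2\min(n(1),n_\infty)>0$. This yields unique constants, bounded uniformly in $k$. A derivative count then tracks smoothness: writing $\mu_j=\mu_j^{p}+c_j/\sqrt n$, each step differentiates the previous coefficient twice and integrates once — a net loss of one derivative, since dividing by the smooth positive $n$ costs nothing — so that from $\mu_0=c_0/\sqrt n\in C^{2m+2}$ one gets $\mu_j\in C^{2m+2-j}$, whence $\mu_{2m},\nu_{2m}\in C^2$. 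The same bookkeeping applied to \eqref{fj} gives $F_{2j}\in C^{2m-2j+2}$, so $F_{2m}\in C^2$; this is exactly where the hypotheses $n\in C^{2m+2}$, $F\in C^{2m}$ are consumed.

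Second, I would compute the residual exactly. Substituting the recursion \eqref{muj}--\eqref{nuj} into $\sum_j k^{-j}Lr_j$, with $Lr_j$ given by \eqref{Lr}, produces a telescoping sum in which every power of $k$ cancels except the two highest-index terms, leaving $k^{-2m}(\mu_{2m}''\xi+\nu_{2m}''\xi^{-1})$. A parallel telescoping, using $F_2=F/n^2$ and $F_{2j+2}=-F_{2j}''/n^2$ from \eqref{fj}, shows $\sum_{j=1}^m k^{-2j}LF_{2j}=F+k^{-2m}F_{2m}''$. Since $Lu=F$, these combine to give $L(u-\wu^m)=-k^{-2m}(\mu_{2m}''\xi+\nu_{2m}''\xi^{-1}+F_{2m}'')$; because $|\xi|\equiv1$ and the three functions are bounded in $C^0$ uniformly in $k$ by the regularity step, $\Vert L(u-\wu^m)\Vert_{L^2(0,1)}\le Ck^{-2m}$. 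I would then verify directly from \eqref{BC0}, \eqref{BC} and \eqref{utm} that the boundary contributions telescope so that $B_L(u-\wu^m)=0$ and $B_R(u-\wu^m)=0$ (the odd-index conditions vanish, while the even-index conditions exactly cancel the $F_{2j}$ boundary terms).

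Finally, setting $e=u-\wu^m$, I would invoke the $k$-uniform a priori stability estimate for this one-dimensional Helmholtz problem: any $v$ with $Lv=g$, $v(0)=0$, $v'(1)-ikn_\infty v(1)=0$ satisfies $\Vert v'\Vert_{L^2}+k\Vert v\Vert_{L^2}\le C\Vert g\Vert_{L^2}$ with $C$ independent of $k$ (provable by a Rellich/Morawetz multiplier argument, e.g. testing against $x\,\overline{v}'$, exploiting $n>0$ and the non-trapping straight-ray geometry in one dimension). Applying this to $e$ with $g=Le=\mathcal{O}(k^{-2m})$ and, for $k\ge1$, estimating $\Vert e\Vert_{H^1}\le \Vert e'\Vert_{L^2}+k\Vert e\Vert_{L^2}\le C\Vert Le\Vert_{L^2}$ gives $\Vert u-\wu^m\Vert_{H^1(0,1)}\le Ck^{-2m}$, which is \eqref{asym}. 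I expect the main obstacle to be this $k$-uniform stability estimate, since the independence of $C$ from $k$ is precisely what converts the $\mathcal{O}(k^{-2m})$ residual into an $\mathcal{O}(k^{-2m})$ error; a secondary technical point is confirming that the constants from the boundary-condition systems, and hence the $C^2$ norms of $\mu_{2m},\nu_{2m}$, remain bounded uniformly in $k$.
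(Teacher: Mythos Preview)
Your proposal is correct and follows essentially the same route as the paper: telescope the sum $\sum k^{-j}Lr_j$ and $\sum k^{-2j}LF_{2j}$ to obtain an $\mathcal{O}(k^{-2m})$ residual (the paper writes it as $k^{-2m}(n^2F_{2m+2}-\mu_{2m}''-\nu_{2m}'')$, equivalent to yours via $F_{2m}''=-n^2F_{2m+2}$), verify the homogeneous boundary conditions by cancellation, and then invoke a $k$-uniform $H^1$ stability bound for the homogeneous boundary-value problem. Where the paper simply cites \cite[Lemmas~2.1 and~2.2]{aziz1988two} for the stability estimate and for the existence/uniform-boundedness of the $\mu_j,\nu_j$, you have supplied the explicit integrating-factor construction, the nonsingularity of the $2\times2$ boundary system for large $k$, and the Morawetz-multiplier justification yourself.
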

\begin{proof} First,  we note that,  using \cite[Lemma 2.2]{aziz1988two} {and the regularity condition on $n$} we can show by a simple induction that, for $k$ sufficiently large,  $\mu_j$ and $\nu_j$ are well-defined for all $j = 0, \ldots, 2m$ and also $\Vert \mu_{2m}''\Vert_{\infty, [0,1]}$ and  $\Vert \nu_{2m}''\Vert_{\infty, [0,1]} $ are both bounded independently
  of $k$.   Moreover {the regularity conditions of $n$ and $F$ also ensure that} $F_{2j}$ are well defined, {for $j = 1, \ldots, m + 1$},   and $\Vert F_{2m+2}\Vert_{\infty,[0,1]}$ is bounded independently of  $k$.

  Now, to prove the result we observe that by \eqref{Lr}, and then \eqref{muj} --  \eqref{start},
  \begin{align}
    L\left( \sum_{j=0}^{2m} k^{-j} r_j\right) &=  \sum_{j=0}^{2m}\left( k^{-j} \mu_j'' + \ri k^{-(j-1)} (2n \mu_j' + n' \mu_j)\right)\xi \noindent \\& \quad \quad +  \sum_{j=0}^{2m}\left( k^{-j} \nu_j'' - \ri k^{-(j-1)} (2n \nu_j' + n' \nu_j)\right)\xi^{-1}\nonumber\\
                                              &  = \sum_{j=0}^{2m}\left[\left( k^{-j} \mu_j'' - k^{-(j-1)} \mu_{j-1}''\right) + \left( k^{-j} \nu_j'' - k^{-(j-1)} \nu_{j-1}''\right)\right]\nonumber\\
    & = k^{-2m} \left( \mu_{2m}'' + \nu_{2m}''\right). \label{L1}
  \end{align}
  Moreover, using also \eqref{fj},
  \begin{align}     L\left( \sum_{j=1}^{m} k^{-2j} F_{2j} \right) &=
                                                                              \sum_{j=1}^{m} k^{-2j} (F_{2j}'' + k^2 n^2 F_{2j} ) = \ n^2 \sum_{j={1}}^{{m}}  (  k^{-2j + 2} F_{2j} - k^{-2j}  F_{2j+2} )  \nonumber \\
           & = n^2(F_2 - k^{-2m} F_{2m+2} ) = F - k^{-2m} n^2 F_{2m+2}.\label{L2}
       \end{align}
         Combining \eqref{utm}, \eqref{L1} and \eqref{L2},  and using $Lu = F$,   we have
         \begin{align} \label{L3} L(u - \wu^m) = k^{-2m} (n^2F_{2m+2} - \mu_{2m}'' - \nu_{2m}''). \end{align}
         Also, by \eqref{BC0}, \eqref{BC},  we have
         \begin{align} \label{L4} B_L \wu^m & = \sum_{j=0}^{2m} k^{-j} B_L r_j + \sum_{j=1}^m k^{-2j} B_L F_{2j} 
                                                                                                                                = u_L + \sum_{j=1}^m k^{-2j} (-F_{2j}(0) + F_{2j}(0)) = u_L. \end{align}
       and
       \begin{align} \label{L5} B_R \wu^m & = \sum_{j=0}^{2m} k^{-j} B_R r_j + \sum_{j=1}^m k^{-2j} B_R F_{2j} 
                                                                                                                              =  \sum_{j=1}^m k^{-2j} (-B_RF_{2j}  + B_R F_{2j}) = 0.
       \end{align}
       Therefore \begin{align} \label{HomBC}
                   B_L(u - \wu^m ) = 0 = B_R(u - \wu^m), \end{align}
                 and it follows (see e.g., \cite[Lemma 2.1]{aziz1988two}),  that
                 $${\Vert u - \wu^m \Vert_{H^1(0,1)} \leq C k^{-2m}} ,$$
which leads to the required result \eqref{asym}.
\end{proof}

  \begin{remark}\label{rem:app}
    By taking $m = 1$ in Theorem \ref{thm: error estimate for asymptotic expanstion},
     and under the assumptions $n \in C^4[0,1]$ and $F \in C^2[0,1]$, we obtain the approximation
     (valid for $k$ sufficiently large):
     \begin{align} \label{app1}
\wu^1= \widetilde{\mu} \xi + \widetilde{\nu} \xi^{-1} + \widetilde{F},
\end{align}
where    \begin{align} \label{app2} \xi = \exp(\ri k N), \quad \widetilde{\mu} = \mu_0 + k^{-1} \mu_1 + k^{-2} \mu_2,\quad  \widetilde{\nu} = \nu_0 + k^{-1} \nu_1 + k^{-2} \nu_2, \quad \text{and} \quad \widetilde{F} = k^{-2} F_2 = k^{-2}F/n^2. \end{align}
With this approximation we have the error estimate  $|| u - \wu^1 ||_{H^1(0,1)} = \mathcal{O}(k^{-2})$. We use this approximation in the numerical experiment in the following section.
\end{remark}

\subsection{Computation of $\wu^1$}
\label{subsec: computation of mu and nu}
The computation of $\widetilde{F}$ is easy, and thus we need to compute $\widetilde{\mu}$ and $\widetilde{\nu}$ in order to obtain $\wu^1$, which is then reduced to the computation of $\{ \mu_j, \nu_j \}_{j = 0}^2$. We know from \secref{subsec: asymptotic expansion} that $\mu_j$ and $\nu_j$ satisfy the ODE system \eqref{muj} -- \eqref{start} together with the boundary conditions \eqref{BC0} and \eqref{BC} for $j = 0, 1, 2$. The ODE system \eqref{muj} -- \eqref{start} admits the following solutions:
\begin{align}
  \label{tot_soln}
\mu_j(x) = \alpha_j^1 \mu_j^G(x) + \mu_j^P(x), \quad \nu_j(x) = \alpha_j^2 \nu_j^G(x) + \nu_j^P(x),
\end{align}
where \begin{align} \label{gen_soln} \mu_j^G(x) = \nu_j^G(x) = n(x)^{-\frac{1}{2}}\end{align}
are the general solutions and
\begin{align}
\mu_j^P(x) = \frac{i}{2} n(x)^{-\frac{1}{2}} \int_0^x \mu_{j - 1}^{\prime\prime}(s) n(s)^{-\frac{1}{2}} ds, \quad \nu_j^P(x) = - \frac{i}{2} n(x)^{-\frac{1}{2}} \int_0^x \nu_{j - 1}^{\prime\prime}(s) n(s)^{-\frac{1}{2}} ds
\end{align}
are the particular solutions, with the coefficients $\alpha_j^1$ and $\alpha_j^2$ determined by the boundary conditions \eqref{BC0} and \eqref{BC}. We let
\begin{align} \label{eq: integrals in mu and nu}
\cI_j^{\mu}(x) = \int_0^x \mu_{j - 1}^{\prime\prime}(s) n(s)^{-\frac{1}{2}} ds, \quad \cI_j^{\nu}(x) = \int_0^x \nu_{j - 1}^{\prime\prime}(s) n(s)^{-\frac{1}{2}} ds.
\end{align}
In general, we do not have the analytic expressions for $\mu_j$ and $\nu_j$, since we do not always have an explicit expression for the integrals in \eqref{eq: integrals in mu and nu}. We sometimes need to resort to numerical integration.

\paragraph{Partition of $[0, 1]$} Let $M$ and $L$ be positive integers with $H = \frac{1}{M}$ and $h = \frac{1}{LM}$, where $L$ is assumed to be even \blue{for convenience}. We partition $[0, 1]$ into $0 = x_0 < x_1 < \ldots < x_{M - 1} < x_M = 1$, where $x_m = mH$ for $m = 0, 1, \ldots, M$. For each interval $[x_m, x_{m + 1}]$, we further partition it into $x_m = x_m^0 < x_m^1 < \ldots < x_m^{L - 1} < x_m^L = x_{m + 1}$, where $x_m^{\ell} = x_m + \ell h$ for $\ell = 0, 1, \ldots, L$. We aim to obtain the values of $\widetilde{\mu}$ and $\widetilde{\nu}$ at $x_m$ for $m = 0, 1, \ldots, M$.

\paragraph{Computation of $\mu_0$ and $\nu_0$} Note that $\mu_0^P = \nu_0^P = 0$ since $\mu_{-1} = \nu_{-1} = 0$. Then $\mu_0(x)$ and $\nu_0(x)$ can be analytically obtained with the coefficients $\alpha_0^1$ and $\alpha_0^2$ determined by the boundary condition \eqref{BC0}.

\paragraph{Computation of $\mu_1$ and $\nu_1$} Note that $\mu_0^{\prime\prime}(x) n(x)^{-\frac{1}{2}}$ and $\nu_0^{\prime\prime}(x) n(x)^{-\frac{1}{2}}$, i.e., the integrands of $\cI_1^{\mu}$ and $\cI_1^{\nu}$, are analytically available. We obtain the values of $\cI_1^{\mu}(x_m^{\ell})$ and $\cI_1^{\nu}(x_m^{\ell})$ for each $m$ and $\ell$ by successively applying an $M_G$-point Gauss quadrature rule to the integration on $[x_m^{\ell}, x_m^{\ell + 1}]$. Then we obtain the values of $\mu_1(x_m^{\ell})$ and $\nu_1(x_m^{\ell})$ for each $m = 0, 1, \ldots, M$ and $\ell = 0, 1, \ldots, L$, with the coefficients $\alpha_1^1$ and $\alpha_1^2$ determined by the boundary condition \eqref{BC} for $j = 1$.

\paragraph{Computation of $\mu_2$ and $\nu_2$} Note that
\begin{align} \label{eq: computation of mu1P}
\left( \mu_1^P(x) \right)^{\prime\prime} = \frac{i}{2} \left[ \left( \frac{3}{4} n(x)^{-\frac{5}{2}} n^{\prime}(x)^2 - \frac{1}{2} n(x)^{-\frac{3}{2}} n^{\prime\prime}(x) \right) \cI_1^{\mu}(x) - \frac{3}{2} n(x)^{-2} n^{\prime}(x) \mu_0^{\prime\prime}(x) + n(x)^{-1} \mu_0^{\prime\prime\prime}(x) \right].
\end{align}
Hence $\mu_1^{\prime\prime}(x) n(x)^{-\frac{1}{2}}$ is available at $x_m^{\ell}$ for each $m = 0, 1, \ldots, M$ and $\ell = 0, 1, \ldots, L$, and so is $\nu_1^{\prime\prime}(x) n(x)^{-\frac{1}{2}}$. We obtain the values of $\cI_2^{\mu}(x_m)$ and $\cI_2^{\nu}(x_m)$ for each $m$ by successively applying the composite Simpson's rule to the integration on $[x_m, x_{m + 1}]$ using the values of $\mu_1^{\prime\prime}(x) n(x)^{-\frac{1}{2}}$ and $\nu_1^{\prime\prime}(x) n(x)^{-\frac{1}{2}}$ at $x_m^{\ell}$ for $\ell = 0, 1, \ldots, L$. The we obtain the values of $\mu_2(x_m)$ and $\nu_2(x_m)$ for each $m = 0, 1, \ldots, M$, with the coefficients $\alpha_2^1$ and $\alpha_2^2$ determined by the boundary condition \eqref{BC} for $j = 2$.

\subsection{The random problem}
  \label{subsec:random}
  We now introduce the random model by assuming that $n(x)$ depends in an affine way on
  $d$ i.i.d. random variables $\by=(y_1,...,y_d)$ 
with $y_i\in U[-1,1]$.
That is, we assume
\begin{align} \label{eq: parametrization of refractive index}
n(x,\by)=n_0(x) + \sum_{j=1}^{d}n_j(x)y_j,
\end{align}
where $ n_j(x) \in C^{4}{[-1, 1]} $ for $j = 0, 1, \ldots, d$. We also assume that the expansion functions satisfy, for some constant $C>0$,

  \begin{align} \label{pos} \min_{x \in [0,1]} n_0(x)  - \sum_{j=1}^d ||n_j||_{\infty, [0,1]} \ \ge\  C
  \end{align}
  This condition ensures that
  \begin{align}  C \ \leq\  n(x,\by) \ \leq \  \sum_{j=0}^d \Vert n_j \Vert_{\infty, [0,1]},    \quad \text{for all} \quad  x \in [0,1], \quad \by \in U[-1,1]^d. \label{uniform}
  \end{align}


With the parametrization of $n$ given in \eqref{eq: parametrization of refractive index}, we have
\begin{align}
N(x,\by) = N_0(x)  + \sum_{j=1}^{d} N_j(x)y_j, \quad \text{with} \quad   N_j(x) = \int_0^x n_j(x')\rd x', \quad j = 0, \ldots, d,
\end{align}
}
and hence
  \begin{align}
    \xi(x, \by) = \exp(\ri k N_0(x)) \exp(\ri k \ba(x)\cdot \by), 
  \end{align}
  where $ \ba \in C[0,1]^d$ is the real vector-valued function with components given by
\begin{align} a_j(x) = N_j(x), \quad j = 1, \ldots , d. \label{defa} \end{align}

In UQ applications one is often  interested  in computing  the expectation or higher moments of a Quantity of Interest,  typically a  functional of the solution  $u$. Since our aim here is to provide an application of the use of the FCCS rule combined with the hybrid numerical-asymptotic method we restrict attention to the computation of the the expectation of $u(x)$ at any given point $x \in [0,1]$ which we can approximate by
  $\mathbb{E}[\wu^1(x)]$, with ${\wu}^1$ given in \eqref{app1}, \eqref{app2} above. To express this quantity neatly, for any smooth enough function $\xi$ defined on $[0,1]\times[-1,1]^d$,   we define the integrals
  \begin{align} (\cI^{\pm \ba} \xi)(x) = 2^{-d}  \int_{[-1,1]^d} {\xi}(x,\by) \exp(\pm \ri k \ba(x) \by)d\by \quad \text{and} \quad (\cI \xi)(x) = 2^{-d}  \int_{[-1,1]^d} {\xi}(x,\by) d\by.
  \end{align}
  Using this notation we have
  \begin{align}
    \mathbb{E}[\wu^1(x)] = & \  \ \ \exp(\ri k N_0(x)) \, (\cI^{+\ba}\widetilde{\mu})(x) \label{intmu} \\
    & +  \exp(- \ri k N_0(x)) \,  (\cI^{-\ba}\widetilde{\nu})(x) \label{intnu} \\ 
          &  +  \, (\cI \widetilde{F}) (x)  \label{intn} 
\end{align}
The integral \eqref{intn} is not oscillatory and can be computed using the standard Clenshaw-Curtis-Smolyak rule,  
but  \eqref{intmu} and \eqref{intnu} need to be computed   using our FCCS rule.  The values of  $\widetilde{\mu}(x, \by)  $ and $\widetilde{\nu}(x,\by)$ at the multidimensional
quadrature nodes are computed using the procedure described in \S \ref{subsec: computation of mu and nu}.

At this point we remark that, although the functions $\mu_j, \nu_j$ are non-oscillatory with respect to spatial variable $x$, the coefficients $\alpha_j^\ell$ appearing in \eqref{tot_soln} do depend on $k$ (via application of the boundary conditions \eqref{BC0}, \eqref{BC}), and so $\widetilde{\mu}, \widetilde{\nu}$ could potentially have
  derivatives (with respect to the random \blue{parameters} $\by$)  which depend on $k$.
We investigate this point in \secref{subsec: numerical examples of UQ problem} and show that,  under reasonable assumptions, the application of the FCCS rule to \eqref{intmu}, \eqref{intnu},  where  $\widetilde{\mu}, \widetilde{\nu}$  are approximated  by the Smolyak \blue{interpolation}  can be justified and works well in practice.

\blue{\begin{remark}[Possible use of asymptotic expansions for integrals] When all components of   $k\ba$ are large enough, one could consider using
  the known asymptotic expansions of oscillatory integrals with linear phase (found, e.g., in \cite[\S 2.4]{DeHuIs:18}) to approximate \eqref{intmu},
  \eqref{intnu}, as an alternative to   the FCCS rule. However  in our  UQ application,  the vector $\ba(x)$ can easily have components which are small or even vanish,  and the location of these  ``non-oscillatory'' directions can vary with $x$ in a complicated way.   Because of this, even when $k$ is large, some of the  products $ka_j(x)$ may still be quite small (or even vanish) and   the corresponding integrals \eqref{intmu},
  \eqref{intnu}  will not be (highly) oscillatory in the corresponding dimensions. Moreover the location of such components also depends on the choice of expansion functions $n_j$. Thus in order to have a method which works robustly in $k$ and $\ba$  we prefer to use the FCCS rule, and we have designed this so that it works robustly independently of the size of $ka_j(x)$ (see
 \eqref{highDoscIntegralEq1}).  Moreover, to obtain high accuracy in the asymptotic evaluation of oscillatory integrals, it is necessary to compute partial derivatives of the smooth function $f$ (in (1.1)), a task which would be complicated in the context of our UQ application.\end{remark}}

\subsection{\blue{More general Quantities of Interest}}

\blue{In the computations in the following section we compute the expected value of the
  linear functional $u(1)$. This has a certain physical meaning, since the boundary condition \eqref{Helmholtz_BC_2} at the point $x=1$ corresponds to an approximation of the Sommerfeld radiation condition at this point. Thus the value $u(1)$ constitutes
  an approximation to the value of the wavefield in the `far field', a quantity of some physical interest.}

 The approximate computation of $\mathbb{E}(u)$  can easily be extended to general linear functionals on $H^1(0,1)$. For example,  to approximate the expected value of $Gu : = (u,g)_{L^2(0,1)}$, with $g \in L^2(0,1)$, we compute
\begin{align} & \mathbb{E}\left[ \int_0^1 \widetilde{u}^1(x, \cdot) \overline{g}(x) \rd x\right] \  = \  \int_0^1 \mathbb{E}\left[\widetilde{u}^1(x, \cdot)\right] \overline{g}(x) \rd x \nonumber \\ &\mbox{\hspace{1cm}} = \ \int_0^1 \exp(\ri k N_0(x))  (\cI^{+\ba} \widetilde{\mu}) (x)  \overline{g}(x) \rd x  + \int_0^1 \exp(- \ri k N_0(x))  (\cI^{-\ba} \widetilde{\nu}) (x)  \overline{g}(x) \rd x + \int_0^1  (\cI \widetilde{f})(x)  \overline{g}(x) \rd x ,
                              \label{addosc}
 \end{align}
so that  additional 1d oscillatory integrals now appear in the first and second terms of \eqref{addosc}. Similar but slightly more complicated terms appear in the computation of the general linear functional on $H^1(0,1)$:
                            $Gu : = (u',g')_{L^2(0,1)} + (u,g)_{L^2(0,1)}$, for any $ g \in H^1(0,1)$. }

                            \blue{Some nonlinear functionals  can also  be handled by the above technique. For example, to compute the variance of $u(x)$, we need also $\mathbb{E}[u(x)^2]$. We approximate this by the expectation of 
                              $$ (\widetilde{u}^1(x))^2 = \widetilde{\mu}^2 \xi^2 + \widetilde{\nu}^2 \xi^{-2} + 2 (\widetilde{\mu} \widetilde{F}) \xi + 2 (\widetilde{\nu} \widetilde{F}) \xi^{-1} + \left(\widetilde{F}^2 + 2 \widetilde{\mu} \widetilde{\nu}\right), $$
                              and the integral with respect to $\by$ of all these terms can be done by the FCCS rule. (The last two terms are not oscillatory.)
                              Similar arguments can be used to approximate the expected value of $\vert u(x) \vert^2 = u(x) \overline{u(x)}$.  }

\section{Numerical Experiments}\label{sec:Numerical}

Our code for the following numerical examples was based on the Sparse Grids Matlab Kit, available at {\tt https://sites.google.com/view/sparse-grids-kit}. See also  \cite{back2011stochastic,piazzola2022sparse}.

\subsection{Multi-dimensional quadrature}
\label{subsec:quad} In this subsection we illustrate the convergence properties of the Filon-Clenshaw-Curtis-Smolyak rule.  \blue{We shall compute the following error indicators},  
  defined as:
  $$ e^{k,d, \ba,r}(f) = \left\vert \cI^{k,d,\ba}f -  \cI^{k,d,\ba,r}f\right \vert \quad \text{and} \quad E^{k,d, \ba,r}(f) = \frac{\left\vert \cI^{k,d,\ba}f -  \cI^{k,d,\ba,r}f\right \vert}{\left\vert \cI^{k,d,\ba}f\right\vert}, $$
with $\cI^{k,d, \ba}f$ and $\cI^{k,d, \ba,r }f$ defined in \eqref{highDoscIntegralEq1} and \eqref{FCCSmolyak_highDoscIntegralEq2} respectively. \blue{Here $e^{k,d, \ba,r}(f)$ is the absolute error and $E^{k,d, \ba,r}(f)$ is a proxy for the normalized error (as defined in \eqref{norm_err}). We use the proxy $E^{k,d, \ba,r}(f)$ because in some experiments the number of oscillatory dimensions $\widetilde{d}$ changes within the experiment and the quantity $k^{- \widetilde{d}}$ is well represented by $| \cI^{k,d,\ba}f |$ according to the discussion above \eqref{norm_err}.}

\paragraph{Example 1: Exactness check.}
Recall that by \cite[Proposition 2]{BaNoRi:00},  we have  $\cQ^{r,d} f = f$,  for all
\begin{align}\label{exactness}    f \in \sum_{\vert \bell \vert = r + d -1} \bP(n_{\ell_1}) \otimes  \bP(n_{\ell_2}) \otimes \ldots \otimes  \bP(n_{\ell_d}),
\end{align}
where $n_1 = 0$  and $n_\ell = 2^{\ell-1}$ for $\ell \geq 2$ and  $\bP(n) $ denotes the univariate polynomials of degree $n$.     Choosing
    \begin{align} \label{choosef}
    f(\by) = \prod_{j=1}^d y_j^2,
\end{align}
 we have $\cQ^{r,d} f = f $ when  $r \geq d+1$. (This is   because    $\bell := (2,2,\ldots, 2)^\top$  satisfies $\vert \bell \vert = 2d \leq r+d-1$.)
Results for $d = 4$, $\ba = (1,0,1,0)^\top$ are given in Table \ref{tab:0} and clearly indicate exactness of the approximation for $r \geq 5 = d+1$. 
  \begin{table}[H]
\begin{center}
\begin{tabular}{|l|l|l|l|l|l|l|l|}
\hline
\multicolumn{8}{|c|}{$e^{k,d,\ba,r} (f)$ } \\
\hline
$r$ & 1 & 2 & 3 & 4 & 5 & 6 & 7 \\
\hline
$ k = \pi/2$ & 2.59 (-2) & 2.59 (-2) & 2.59 (-2) & 2.59 (-2) & {3.47 (-17)} & {2.09 (-17)} & {3.71 (-17)} \\
$ k = 2\pi$ & 4.56 (-3) & 4.56 (-3) & 4.56 (-3) & 4.56 (-3) & {6.07 (-18)} & {1.65 (-17)} & {2.33 (-17)} \\
\hline
\end{tabular}
\caption{\label{tab:0} $e^{k,d,\ba,r} (f)$ for $d = 4$, $k = \pi/2, 2\pi$ with $f$ as in \eqref{choosef}}
\end{center}
\end{table}

In the following three  examples we compute \eqref{highDoscIntegralEq} for the case $d = 3$ with \begin{align} \label{deff} f(\by) = \cos(m y_1y_2y_3)\end{align} and various choices of $\ba$.   The exact value of $\cI^{k,d,\ba}f$ is taken to be
  $\cI^{k,d,\ba,10}f$, and this value is used to compute \blue{the error indicators}. We repeated with exact value computed with   $r=12$  and observed no changes in the results.

\paragraph{Example 2 - fast convergence with increasing $r$.} \label{expt1}

This experiment illustrates the fast decay of the error as   $r$ increases  when $f$ is smooth.
The \blue{normalized error proxies $E^{k,d, \ba,r}(f)$} for four  different values of $m$ as $r$ increases and $k = 101.53$ is fixed
are given in Table \ref{tab:1}. Convergence as $r$ increases is observed in the columns corresponding to $m = 2,4,8$.
For fixed $r$,  \blue{$E^{k,d, \ba,r}(f)$} grows roughly proportional to  $m$.
In the column headed $m=16$ we see that convergence has hardly started yet, and higher $r$ will be needed to see convergence. Intuition for this can be obtained from Theorem \ref{thm:main_grid1}, since  $\Vert f \Vert_{W^{p,d}} \sim m^p$, indicating the need for higher $r$ before convergence commences.

  \bigskip

  \begin{table}[H]
  \begin{center}
    \begin{tabular}{|l|l|l|l|l|}
      \hline
      & $m = 2$ & $m=4$ & $m=8$ & $m=16$     \\
      \hline
      $r$ & $E^{k,d,\mathbf{a},r}(f)$  & $E^{k,d,\mathbf{a},r}(f)$  & $E^{k,d,\mathbf{a},r}(f)$  & $E^{k,d,\mathbf{a},r}(f)$  \\
      \hline
      3 & 3.22 (+0) & 2.67 (+0) & 4.32 (+0) & 2.10 (+0) \\
      4 & 4.10 (-2) & 1.99 (-1)  & 3.73 (-1) & 1.37 (-1) \\
      5 & 2.20 (-3)  & 7.13 (-2) & 1.90 (-1) & 1.83 (-1) \\
      6 & 9.47 (-5) & 2.25 (-3) & 5.87 (-2) & 1.62 (-1) \\
      \hline
    \end{tabular}
    \caption{\label{tab:1} \blue{$E^{k,d, \ba,r}(f)$} as  $r$ increases when $\mathbf{a} = (1,1,1)^\top$, $k = 101.53$ and
      $f$ given by \eqref{deff}}
  \end{center}
\end{table}

    \bigskip

    \paragraph{Example 3 - asymptotic decay  as $k \rightarrow \infty$, with fixed $r$. } \label{expt2}
Again we fix $\ba = (1,1,1)^\top$ and   study the behaviour
  of \blue{the error indicators} as   $k\rightarrow \infty$. In general,  the decay rate of the exact and approximate integrals  as $k \rightarrow \infty$ can be quite delicate.  To see this, consider the following model 1D integral with integration  by parts:
\begin{align} \label{int_parts} \int_{-1}^1 \exp( \ri k y) g(y) \rd y = \frac{1}{\ri k} \left[g(1) \exp(\ri k) - g(-1) \exp(-\ri k)\right] - \frac{1}{\ri k} \int_{-1}^1
\exp( \ri k y) g'(y) \rd y.  \end{align}
A second integration by parts shows that  the second term on the right-hand side of \eqref{int_parts} is  $\mathcal{O}(k^{-2})$, while   the first term   takes the form  $C(k) k^{-1}$, so is dominant in general.
However in general the factor $C(k)$ can  vary considerably with respect to $k$,
leading to possibly irregular behaviour  as $k\rightarrow \infty$.
However  by taking  $k = 2 \ell \pi + \pi/4$, $\ell = 2,4,8,\ldots, 128$,  $C(k)$ turns out to be the $k-$independent constant $C(k) = ((1-\ri) g(1)+ (1+\ri) g(-1))/\sqrt{2}$, thus
ensuring (excluding special cases of $g$) a regular $O(k^{-1})$  decay for the dominant term in \eqref{int_parts}. We use this sequence of wavenumbers in the experiments below.
With  $f $ as given in \eqref{deff} with $m = 2$,   Table \ref{tab:2}  
illustrates that  $\vert \cI^{k,d,\ba} f \vert $  decays with $\mathcal{O}(k^{-d})$.
    Also, \blue{$E^{k,d, \ba,r}(f)$} remains bounded with respect to $k$ when  $r  = 3$ and decays with order \blue{about}  $\mathcal{O}(k^{-1})$ for $r =  4$, as predicted by Theorem \ref{thm:main_grid1}.

 \begin{table}[H]
   \begin{center}
 \begin{tabular}{|l|ll|ll|ll|ll|ll|}
   \hline
    & & &\multicolumn{4}{c|}{$r=3$}& \multicolumn{4}{c|}{$r=4$}\\
   \hline
   $k$  &$\vert \cI^{k,d,\mathbf{a}}(f)\vert $& \blue{$\alpha$} &        $e^{k,d,\mathbf{a},r}(f)$    &       \blue{$\alpha$}    &  $E^{k,d,\mathbf{a},r}(f)$ & \blue{$\alpha$}   & $e^{k,d,\mathbf{a},r}(f)$   &  \blue{$\alpha$}  &  $E^{k,d,\mathbf{a},r}(f)$   & \blue{$\alpha$}\\
   \hline
13.35  & 1.06(-03) &       & 2.25(-03) &      & 2.12(+00) &      & 2.35(-04) &       & 2.21(-01) &      \\
25.92  & 1.04(-04) & \blue{3.50} & 2.66(-04) & \blue{3.22} & 2.56(+00) & \blue{-0.29} & 1.88(-05) & \blue{3.80} & 1.81(-01) & \blue{0.30} \\
51.05  & 1.12(-05) & \blue{3.29} & 3.24(-05) & \blue{3.11} & 2.90(+00) & \blue{-0.18} & 1.28(-06) & \blue{3.97} & 1.14(-01) & \blue{0.68} \\
101.32 & 1.28(-06) & \blue{3.16} & 4.00(-06) & \blue{3.05} & 3.12(+00) & \blue{-0.11} & 8.22(-08) & \blue{4.00} & 6.42(-02) & \blue{0.84} \\
201.85 & 1.52(-07) & \blue{3.09} & 4.96(-07) & \blue{3.03} & 3.26(+00) & \blue{-0.06} & 5.20(-09) & \blue{4.00} & 3.41(-02) & \blue{0.92} \\
402.91 & 1.86(-08) & \blue{3.05} & 6.18(-08) & \blue{3.01} & 3.33(+00) & \blue{-0.03} & 3.27(-10) & \blue{4.00} & 1.76(-02) & \blue{0.96} \\
805.03 & 2.29(-09) & \blue{3.02} & 7.71(-09) & \blue{3.01} & 3.36(+00) & \blue{-0.02} & 2.05(-11) & \blue{4.00} & 8.94(-03) & \blue{0.98} \\
   \hline
 \end{tabular}
 \caption{ \label{tab:2} \blue{Error indicators} for   $\mathbf{a} = (1,1,1)^\top$, $f$ given by \eqref{deff} with $m=2$, increasing $k$, for   $r=3$ and $r=4$. \blue{The decay rate is conjectured to be $Ck^{-\alpha}$, with values of $\alpha$ computed from successive pairs of computations}  }
 \end{center} \end{table}


%
%











{We recall that the rule analysed in this paper uses the mid-point rule  at level 1, and Clenshaw-Curtis grids thereafter (see \eqref{eq:grids}).   By Remark \ref{higherorderk}, 
  a better asymptotic decay of \blue{$E^{k,d, \ba,r}(f)$} with respect to  $k$ --  even for small $r$ --  can be obtained
  if we use instead  the two-point Clenshaw-Curtis  rule at level 1.  Results using this rule for
  the same case as in Table \ref{tab:2} are given in Table \ref{tab:3}.
  Here \blue{$E^{k,d, \ba,r}(f)$} is observed to decay with \blue{$\mathcal{O}(k^{-2})$ when  $r = 4$  although the behaviour for   $r = 3$ has not stabilized yet. Overall we observe  several orders of magnitude improvement in accuracy for large $k$.}
  However we recall  that the number of quadrature points grows more rapidly with dimension when the level 1 rule has more than one point and so this method may not be appropriate for higher dimensions. But if $d$ is not too big,   this variant should be  useful   if accurate results for very high $k$ are required.
 }

 \begin{table}[H]
   \begin{center}
 \begin{tabular}{|l|ll|ll|ll|ll|}
   \hline
     &\multicolumn{4}{c|}{$r=3$}& \multicolumn{4}{c|}{$r=4$}\\
   \hline
   $k$    &        $e^{k,d,\mathbf{a},r}(f)$    & \blue{$\alpha$}          &  $E^{k,d,\mathbf{a},r}(f)$ &  \blue{$\alpha$}  & $e^{k,d,\mathbf{a},r}(f)$   &  \blue{$\alpha$}  &  $E^{k,d,\mathbf{a},r}(f)$   & \blue{$\alpha$} \\
   \hline
  13.35 & 6.65(-05) &       & 6.27(-02) &      & 2.05(-05) &       & 1.93(-02) &      \\
  25.92 & 2.57(-06) & \blue{4.90} & 2.47(-02) & \blue{1.40} & 8.37(-07) & \blue{4.82} & 8.06(-03) & \blue{1.32} \\
  51.05 & 5.36(-08) & \blue{5.71} & 4.79(-03) & \blue{2.24} & 2.86(-08) & \blue{4.98} & 2.56(-03) & \blue{1.69} \\
 101.32 & 1.03(-09) & \blue{5.77} & 8.05(-04) & \blue{2.60} & 9.25(-10) & \blue{5.01} & 7.23(-04) & \blue{1.84} \\
 201.85 & 2.19(-10) & \blue{2.25} & 1.43(-03) & \blue{-0.83} & 2.93(-11) & \blue{5.01} & 1.92(-04) & \blue{1.92} \\
 402.91 & 1.88(-11) & \blue{3.55} & 1.01(-03) & \blue{0.50} & 9.19(-13) & \blue{5.01} & 4.94(-05) & \blue{1.96} \\
 805.03 & 1.34(-12) & \blue{3.82} & 5.83(-04) & \blue{0.79} & 2.85(-14) & \blue{5.02} & 1.24(-05) & \blue{2.00} \\
   \hline
 \end{tabular}
 \caption{ \label{tab:3} \blue{Error indicators} for   $f$ given in \eqref{deff} with $m = 2$, $\mathbf{a} = (1,1,1)^\top$,  for $r=3$ and $r=4$  using Clenshaw-Curtis 2 point rule on level 1. \blue{The decay rate is conjectured to be $Ck^{-\alpha}$, with values of $\alpha$ computed from successive pairs of computations}}
 \end{center} \end{table}


\paragraph{Example 4 - Robustness to variation in the elements of $\ba$.}
Recall that integral $\cI^{k,d,\ba}(f)$  is only oscillatory in the $j$th dimension when $k \vert a_j \vert \geq 1$,
and if this is not true then the standard quadrature rule (and not its  Filon variant) is applied in that dimension.
The standard rule is  Clenshaw-Curtis when $\ell \geq 2$ and the mid-point rule when $\ell = 1$. Here we show that the algorithm proposed works stably when $k \vert a_j \vert$ passes through the value $1$ or when $a_j = 0$. In this example we consider examples  with   $\ba = (0.01,1,1)^\top$ and $\ba = (0,1,1)^\top$. In the first case
$\cI^{k,d,\ba}(f)$ is not oscillatory in the $y_1$ direction unless
$k\geq 100$ and in the second case it is never oscillatory in the $y_1$ direction. Results are in Table \ref{tab:4}.
The method behaves robustly with respect to the value of $a_1$. 



\begin{table}[H]
  \begin{center}
\begin{tabular}{lr}

    \begin{tabular}{|c|c|c|c|}
      \hline
      & \multicolumn{3}{c|}{$\ba = (0.01,1,1)^\top$}\\
      \hline
      & $k = 25.92$ & $k = 101.32$ & $k = 201.85$      \\
      \hline
      $\vert \cI^{k,d,\ba} (f)\vert \ : $ & 2.30(-3) & 1.68(-4) & 3.96(-5)  \\
      \hline
      $E^{k,d,\ba,r}(f)$: &&&\\
      $r=4$ & 1.96(-1) & 1.34(-1) & 5.42(-2)\\
      $r=5$ & 2.41(-2) & 7.00(-3) & 3.54(-3)\\
      $r=6$ & 1.37(-4) & 2.70(-4) & 4.57(-6) \\
      $r=7$ & 1.30(-5) & 2.13(-5) & 1.92(-5)\\
      $ r=8$ & 2.05(-6) & 4.46(-7) & 1.59(-7)\\
      \hline
    \end{tabular}
&
    \begin{tabular}{|c|c|c|}
      \hline
       \multicolumn{3}{|c|}{$\ba = (0,1,1)^\top$}\\
      \hline
       $k = 25.92$ & $k = 101.32$ & $k = 201.85$  \\
      \hline
      2.30(-3)         & 1.70(-4)  & 4.38(-5)   \\
      \hline
      &&\\
1.80(-1)&1.64(-1) &1.63(-1)\\
        2.47(-2) & 7.97(-3) &4.87(-3) \\
         2.11(-4)&3.88(-4)  & 2.21(-4) \\
         1.56(-5) & 1.53(-5) & 1.09(-5)\\
         2.12(-6) & 8.60(-7)  & 2.48(-7)\\
             \hline
    \end{tabular}

\end{tabular}
    \caption{\label{tab:4} Values of \blue{$E^{k,d, \ba,r}(f)$} as $r$ increases, with $d=3, m = 2$ and two different choices of $\ba$. }
\end{center} \end{table}


In the next   example we illustrate the advantage arising when some dimensions of the problem are less important than others.

\paragraph{Example  5 - Decaying importance of dimensions}


{
  Here we consider the functions
\begin{align} 
 f(\by) & = \cos(m y_1 y_2) \cos(m y_3 y_4) \cos( m y_5 y_6), \label{hd1} \\
f(\by) & = \cos(m y_1 y_2) \cos(0.1 m y_3 y_4) \cos(0.01 m y_5 y_6).\label{hd2}
\end{align}
With  $k = 16 \pi + 1 \approx 51.27$ and $\ba = (1, \ldots, 1)^\top \in \mathbb{R}^6$,
the  reference values of the integrals are  taken to be the product of the quadrature approximations to the three 2-d integrals:
$$\cI^{k,6,\ba} (f) \ \approx\  \prod_{j = 1}^3 \cI^{k,2,(a_{2j - 1}, a_{2j}),10} (f_j).$$
\blue{$E^{k,d, \ba,r}(f)$} for each choice of $f$ are given in Table \ref{tab:5}}, illustrating   the substantial benefit of the decay in importance of the dimensions in case \eqref{hd2} compared with case \eqref{hd1}.



  \begin{table}[H]
  \begin{center}
    \begin{tabular}{|l|l|l|l|l|}
      \hline
      & $m = 1$ & $m = 2$ & $m = 3$ & $m=4$     \\
      \hline
      $r$ & $E^{k,d,\mathbf{a},r}(f)$  & $E^{k,d,\mathbf{a},r}(f)$  & $E^{k,d,\mathbf{a},r}(f)$  & $E^{k,d,\mathbf{a},r}(f)$  \\
      \hline
      6 & {7.92 (-1)} & {3.14 (+1)} & {7.78 (+0)} & {1.74 (+1)} \\
      7 & {8.51 (-3)} & {1.49 (+0)} & {1.00 (+0)} & {6.15 (+0)} \\
      8 & {4.47 (-5)} & {8.51 (-2)} & {1.68 (-1)} & {2.52 (+0)} \\
      9 & {3.21 (-6)} & {3.62 (-4)} & {7.35 (-3)} & {3.71 (-1)} \\
      \hline
      \hline
      6 & 3.52 (-7) & 2.27 (-5) & 4.27 (-4) & 6.51 (-3) \\
      7 & 7.84 (-9) & 1.93 (-6) & 1.67 (-5) & 1.56 (-4) \\
      8 & {6.74 (-10)} & 1.35 (-7) & 6.33 (-7) & 6.04 (-6) \\
      9 & {2.61 (-12)} & {8.76 (-10)} & {3.23 (-8)} & 1.18 (-6) \\
      \hline

    \end{tabular}
    \caption{\label{tab:5} \blue{$E^{k,d, \ba,r}(f)$} as  $r$ increases when  $d = 6$, $\mathbf{a} = (1, \ldots, 1)^\top$, $k = 51.27$ and $f$ given by \eqref{hd1} (top panel) and \eqref{hd2} (bottom panel)}
  \end{center}
\end{table}



\paragraph{Example 6 -- Dimension-adaptive methods.}
It is well-known (e.g., \cite{GeGr:03,nobile2016adaptive}) (and the previous example shows) that if the dimensions can be ordered so that higher dimensions become less and less important than lower dimensions, then dimension-adaptive tensor product methods will be more efficient than standard procedures. This observation is relevant to the UQ problem considered in the next subsection. To illustrate this here, we consider the integral
\begin{align} \label{academic}
\cI(x) : = \int_{[-1,1]^d} n^{-1/2} (x, \by) \exp(\ri k \ba(x). \by) \rd y,
\end{align}
where $n$ given in \eqref{eq: parametrization of refractive index}, with
\begin{align} \label{def_n}  n_0(x)  = 1 \quad \text{and} \quad n_j(x) = \exp(-j) \sin (j \pi x), \quad \text{for} \quad x \in [0,1], \end{align}
and so, by \eqref{defa},
$$
a_j(x) \ =\  N_j(x) \ = \ \int_0^{x} n_j(x') \rd x'  \ = \ \exp(-j) \int_0^{x} \sin (j \pi x') \rd x' = \frac{1}{j \pi}\,  \exp(-j)\, (1 - \cos(j \pi x)).
$$
Since the functions $\mu_0, \nu_0$ (which constitute the principal  parts of $\widetilde{\mu}, \widetilde{\nu}$ in \eqref{app2}) are  $\by$-dependent multiples of $n^{-1/2}$, the computation of \eqref{academic} 
is a good test for the UQ computation considered in the following subsection.
In this example we choose $k = 101.53$ and $x = 1/2$ and we compare the performance of the `standard' FCCS rule (i.e., the rule analysed above) with an adaptive version where the approximation $\cQ^{r,d} \widehat{f}$ in
\eqref{FCCSmolyak_intro} is replaced by a  dimension-adaptive procedure.

\blue{
Our dimension-adaptive algorithm is implemented using the Sparse Grids Matlab kit \cite{nobile2016adaptive,piazzola2022sparse} and uses an adaptive procedure motivated by discussions in \cite{GeGr:03, nobile2016adaptive}. The complete adaptive procedure is given in Algorithm \ref{algo: adaptive FCCS rule}. In Algorithm \ref{algo: adaptive FCCS rule}, $\mathbf{G}, \mathbf{L}$ and $\mathbf{R_{L}}$ are all multi-index sets. A multi-index set $\mathbf{L}$ is said to be downward closed (see, e.g., \cite{DaPr:12})
if $\forall \bell \in \mathbf{L}, \bell - \mathbf{e}_j \in \mathbf{L}$ for $1 \le j \le d$ such that $\ell_j > 1$. Moreover, $\cI_{\mathbf{L}} f$ defined in Line 2 of Algorithm \ref{algo: adaptive FCCS rule} is the FCCS rule based on the generalized sparse grids over the multi-index set $\mathbf{L}$; see also \cite[Section 3.1]{GeGr:03}, \cite[Section 5.2]{GeGr:98} and \cite[Equations (3) and (4)]{nobile2016adaptive}. Then $\mathbf{G}$ contains the indices, over which the newest integral is computed, $\mathbf{L}$ contains the indices, which have been explored and whose neighbours (defined in 5 of of Algorithm \ref{algo: adaptive FCCS rule}) are to be explored, and $\mathbf{R_L}$ contains the indices, which have been explored and are candidates to be chosen to be added into $\mathbf{L}$. Also note that $\mathbf{G} = \mathbf{L} \cup \mathbf{R_L}$. For more detailed discussions of the dimension-adaptive algorithm, we refer readers to \cite{GeGr:03, nobile2016adaptive}.
}

\blue{
\begin{algorithm}[H]
\caption{\textbf{Adaptive FCCS rule}} \label{algo: adaptive FCCS rule}
\begin{algorithmic}[1]
\Statex \textbf{Inputs}: maximum number of sparse grids $N_{\max}$, tolerance $\tau$.
\State Let $\mathbf{L} = \{ \mathbf{1} \}, \mathbf{G} = \{ \mathbf{1} \}, \mathbf{R_{L}} =  \emptyset, \bell = \mathbf{1}$.
\State Let $\cI_{\text{old}} = \cI_{\mathbf{L}} f := \sum_{\mathbf{j} \in \mathbf{L}} \int_{[-1, 1]^d} (D^{j_1} \otimes \cdots \otimes D^{j_d}) \hf(\by) \exp(\ri k \ta \cdot \by) \rd \by = \sum_{\mathbf{j} \in \mathbf{L}} c_{\mathbf{j}} (I^{\omega_1, j_1} \otimes \cdots \otimes I^{\omega_d, j_d}) f$, where $c_{\mathbf{j}} = \sum_{\mathbf{k} \in \{0, 1\}^d, (\mathbf{k} + \mathbf{j}) \in \mathbf{L}} (-1)^{|\mathbf{k}|}$.
\State Let $\mathscr{P}_{\text{old}} = \text{pts}(\cI_{\text{old}})$, where $\text{pts}(\cI_{\text{old}})$ is the set of points used in computing $\cI_{\text{old}}$, and $N = | \mathscr{P}_{\text{old}} |$, where $| \mathscr{P}_{\text{old}} |$ is the cardinality of $\mathscr{P}_{\text{old}}$, and $P_{\max} = +\infty$.
\While {$N < N_{\max}$ \textbf{and} $P_{\max} \ge \tau$}
\State $\boldsymbol{\mathcal{N}_g} = \text{neigh}(\bell, \mathbf{L}) := \{ \mathbf{j} \in \mathbb{N}^d \backslash \mathbf{L}: | \mathbf{j} - \bell | = 1 \}$.
\For {$\mathbf{j} \in \boldsymbol{\mathcal{N}_g}$ such that $\mathbf{L} \cup \{ \mathbf{j} \}$ is downward closed}
\State $\mathbf{G} = \mathbf{G} \cup \{ \mathbf{j} \}$.
\State $\cI_{\text{new}} = \cI_{\mathbf{G}} f$.
\State $\mathscr{P}_{\text{new}} = \text{pts}(\cI_{\text{new}})$.
\State $N = N + | \mathscr{P}_{\text{new}} \backslash \mathscr{P}_{\text{old}} |$.
\State Compute the \emph{local profit} for $\mathbf{j}$: $P_{\mathbf{j}} = | \cI_{\text{new}} - \cI_{\text{old}} | / | \cI_{\text{new}} |$.
\State $R_{\mathbf{L}} = R_{\mathbf{L}} \cup \{ \mathbf{j} \}$.
\State $\cI_{\text{old}} = \cI_{\text{new}}, \mathscr{P}_{\text{old}} = \mathscr{P}_{\text{new}}$.
\EndFor
\State Compute the \emph{global profit} $P_{\max} = \max \{ P_{\mathbf{j}}: \mathbf{j} \in \boldsymbol{\mathcal{N}_g} \text{ such that } \mathbf{L} \cup \{ \mathbf{j} \} \text{ is downward closed}\}$.
\State Choose the $\bell$ from $R_{\mathbf{L}}$ with the highest profit $P_{\bell}$.
\State $\mathbf{L} = \mathbf{L} \cup \{ \bell \}, R_{\mathbf{L}} = R_{\mathbf{L}} \backslash \{ \bell \}$.
\EndWhile
\State $\cI^{\tau} = \cI_{\text{new}}$.
\Statex \textbf{Outputs}: the integral $\cI^{\tau}$.
\end{algorithmic}
\end{algorithm}
}

To compute \blue{the error indicators}, a reference value for $\cI(x)$  is computed by `brute force',  using
the tensor product Gauss-Legendre rule with 25 Gauss points in each of the $d$ dimensions.

Results are given  in Tables \ref{tab: comparison of adaptive and standard FCCS rule when d is 4} -- \ref{tab: comparison of adaptive and standard FCCS rule when d is 8}. The tables show the substantial advantage of the adaptive method in terms of the number of function evaluations over the standard method when the dimensions have decreasing importance, a situation often encountered in UQ applications.
\begin{table}[H]
\begin{center}
\begin{tabular}{|c|c|c|c|c|}
\hline
 & adaptive & \multicolumn{3}{c|}{standard} \\
\hline
 & $\tau = 10^{-4}$ & $r = 4$ & $r = 5$ & $r = 6$ \\
\hline
\blue{$E^{k,d, \ba,r}(f)$} & 1.15 (-7) & 8.37 (-6) & 1.34 (-7) & 7.21 (-10) \\
\hline
number of function evaluations & 53 & 137 & 401 & 1105 \\
\hline
\end{tabular}
\caption{\label{tab: comparison of adaptive and standard FCCS rule when d is 4} Comparison of the dimension-adaptive and standard FCCS rule for  $\cI(1/2)$,  $d = 4$}
\end{center}
\end{table}

\begin{table}[H]
\begin{center}
\begin{tabular}{|c|c|c|c|c|}
\hline
 & adaptive & \multicolumn{3}{c|}{standard} \\
\hline
 & $\tau = 10^{-6}$ & $r = 4$ & $r = 5$ & $r = 6$ \\
\hline
\blue{$E^{k,d, \ba,r}(f)$} & 9.33 (-8) & 8.46 (-6) & 1.41 (-7) & 8.64 (-10) \\
\hline
number of function evaluations & 129 & 389 & 1457 & 4865 \\
\hline
\end{tabular}
\caption{\label{tab: comparison of adaptive and standard FCCS rule when d is 6} Comparison of the dimension-adaptive and standard FCCS rule  for $\cI(1/2)$,  $d = 6$}
\end{center}
\end{table}

\begin{table}[H]
\begin{center}
\begin{tabular}{|c|c|c|c|c|}
\hline
 & adaptive & \multicolumn{3}{c|}{standard} \\
\hline
 & $\tau = 10^{-6}$ & $r = 4$ & $r = 5$ & $r = 6$ \\
\hline
\blue{$E^{k,d, \ba,r}(f)$} & 1.17 (-7) & 8.46 (-6) & 1.41 (-7) & 7.85 (-10) \\
\hline
number of function evaluations & 151 & 849 & 3937 & 15713 \\
\hline
\end{tabular}
\caption{\label{tab: comparison of adaptive and standard FCCS rule when d is 8} Comparison of the dimension-adaptive and standard FCCS rules for $\cI(1/2)$,  $d = 8$}
\end{center}
\end{table}

\subsection{ UQ problem for the Helmholtz equation} \label{subsec: numerical examples of UQ problem}

In this subsection we consider the Helmholtz problem \eqref{Helmholtz_Eq_2} -- \eqref{Helmholtz_BC_2} with random  $n = n(x, \by)$ given by \eqref{eq: parametrization of refractive index}, but with $F$ a function of $x$ only,  so that the solution $u= u(x, \by)$ depends on $x$ and  $\by$ (and also on the  frequency $k$). Then formulae \eqref{intmu} -- \eqref{intnu} show that  $\mathbb{E}[u(x)]$ can be written as a sum of oscillatory integrals  with kernels  
  given in equation \eqref{app2}. These integrals  are (formally) in a form  suitable for approximation by our FCCS rule, but in order to predict more precisely how well this will work,  some further analysis is needed to investigate the
  regularity of $\widetilde{\mu}$ and $\widetilde{\nu}$ with respect to $\by$. Since the principal components of $\widetilde{\mu},\, \widetilde{\nu}$ (i.e. those components which are $\mathcal{O}(1)$ as $k \rightarrow \infty$) are $\mu_0, \nu_0$ respectively, we restrict the discussion  here to the analysis of the regularity of $\mu_0, \nu_0$ with respect to $\by$. In the following discussion we make the  simplifying  assumptions that
  (in the random problem),
  the Dirichlet data $u_L = u(0) $ is a constant independent of $\by$ and that
  $n_\infty =  n(1, \by)$ is a positive constant  for all $\by \in [-1,1]^d$. Then it can be shown,  after some algebra,  that
  \begin{align} \label{mu0nu0}
  \mu_0(x,\by) =  \alpha_0^1(\by) n(x, \by)^{-1/2} , \quad \text{and} \quad \nu_0(x,\by) =  \alpha_0^2(\by) n(x, \by)^{-1/2},
  \end{align}
  where the functions $\alpha_0^j$ are given by 
  \begin{align} \label{alphas1}
  \alpha_0^2(\by)  = \frac{u_L \sqrt{n(0,\by)} n'(1,\by) }{2 \ri} \frac{\exp(\ri k N(1, \by))}{ n'(1,\by) \sin(k N(1,\by))  - 2 k n_{\infty}^2\exp(-\ri k N(1,\by))}
  \end{align}
  and
  \begin{align} \label{alphas2} \alpha_0^1(\by) = u_L \sqrt{n(0,\by)} - \alpha_0^2(\by).
  \end{align}

  From this we  see that:
  \begin{itemize}
  \item[(i)] If $n'(1,\by) = 0$ (i.e., $n(x, \by)$ is a constant function of  $x$ near $x = 1$ for all $\by$),   then $\alpha_0^2(\by) = 0$,
      $\alpha_0^1(\by)  = u_L \sqrt{n(0,\by)}$, and there is no $k-$dependent oscillation with respect to $\by$  in $\alpha_0^1(\by)$. The component   $r_0$ in the expansion \eqref{utm} corresponds to a wave moving from left to right across the domain;
  \item[(ii)] If $n'(1,\by) \not= 0$ then, while $\alpha_0^1$ and $\alpha_0^2$ are both potentially have $k-$dependent  oscillations with respect to  $\by$, the {\em amplitude} of their oscillatory components decays with $\mathcal{O}(1/k)$  as $k$ increases.
    \end{itemize}
    These facts  allow us to apply the FCCS rule directly to the integrals  \eqref{intmu} and \eqref{intnu} without any further splitting of their kernels.
    We do this in the following example for a case where $n'(1,\by)$ is not the zero function, and observe  good results.

    \bigskip


In the following two examples we consider computing $\mathbb{E}[u(1)]$ for the problem \eqref{Helmholtz_Eq_2} -- \eqref{Helmholtz_BC_2} with
$u_L = 1, n_{\infty} = 1, F(x) = x$, with  $n$ given by \eqref{eq: parametrization of refractive index},
and \eqref{def_n}. So in this case $n(1,\by) = n_\infty = 1$ for all $\by$ and formulae \eqref{alphas1}, \eqref{alphas2} hold.

Using the asymptotic approximation described in \S \ref{subsec: asymptotic expansion}, this can be     approximated by
 $\mathbb{E}[\widetilde{u}^1(1)]$, where $\widetilde{u}^1$ is defined in Remark \ref{rem:app}.
 The formulae in   \eqref{intmu}, \eqref{intnu} and \eqref{intn} show that $\mathbb{E}[\widetilde{u}^1(1)]$ can be written as a sum of three multidimensional integrals, the first two of which are oscillatory.

 The functions  $\widetilde{\mu}$ and $\widetilde{\nu}$ appearing in \eqref{intmu} and \eqref{intnu} are
 obtained using the formulae \eqref{app1},  \eqref{app2}, \eqref{tot_soln}, requiring the solution of a  system of ODEs that are non-oscillatory with respect to $x$.
   To solve these we use the method described in
 \S \ref{subsec: computation of mu and nu} to do this with parameters chosen as
 $M = 1, L = 1024$ and $M_G = 10$. These parameters are chosen to give very accurate values of $\widetilde{\mu}$ and $\widetilde{\nu}$ and it is not the purpose of this paper to investigate the most efficient choice of these parameters, since this question is not related to our main task here, namely to find methods which are efficient in terms of $k$ and $d$ dependence.

 We then study the performance of both the standard and the adaptive methods for approximating the integrals appearing in  \eqref{intmu}, \eqref{intnu} and \eqref{intn}.  

 \paragraph{Example 7 - The standard FCCS  method.}

 For $r\geq 1$,   we denote by {$\mathbb{E}^{k,r}[\wu^1(x)]$ the approximation of $\mathbb{E}[\wu^1(x)]$ obtained by applying the FCCS rule to  \eqref{intmu}, \eqref{intnu} and \eqref{intn} with maximum level $r$}. Since \eqref{intn} is not oscillatory, the FCCS rule just corresponds to a standard sparse grid quadrature on the hierarchy of grids \eqref{eq:grids}.


\begin{table}[H]
\begin{center}
\begin{tabular}{l|llll}
$r$ & $k = 8$ & $k = 16$ & $k = 32$ & $k = 64$ \\
\hline
  5&	5.86 (-3)&	1.18 (-4)&	{8.83 (-4)}&	{8.09 (-4)}\\
   6&	5.83 (-3)&	{2.48 (-5)}& {5.06 (-5)}&	{3.19 (-4)}\\
  7 &	5.83 (-3)&	{2.78 (-5)}& {6.89 (-6)}&	{1.13 (-4)}\\
 8 &	5.83 (-3)&	{2.79 (-5)}&	{5.83 (-6)}&	{1.85 (-6)}\\
  9&	5.83 (-3)&	{2.79 (-5)}&	{5.79 (-6)}&	{8.52 (-7)}\\
10&	5.83 (-3)&	{2.79 (-5)}&	{5.79 (-6)}&	{3.65 (-7)}\\
  11&	5.83 (-3)&	{2.79 (-5)}&	{5.79 (-6)}&	{3.50 (-7)}\\
  12& 	5.83 (-3)&	{2.79 (-5)}&	{5.79 (-6)}&	{3.50 (-7)}\\
\blue{$\alpha$} & & {7.71} & {2.27} & {4.05} \\
\hline
\end{tabular}
\end{center}
\caption{\label{tab:absolute error of UQ problem where d is 4} $\vert \mathbb{E}^{k,r}[\wu^1(1)] - \mathbb{E}[u(1)]\vert$  for $d = 4$ as  $r$ and $k$ vary. \blue{The last row shows the decay rate $\mathcal{O}(k^{-\alpha})$ with $\alpha$ computed from successive pairs of errors of $r = 12$.}}
\end{table}

 %





  We first consider $d = 4$. A reference value
  is computed by applying the continuous piecewise linear  finite element method to the full $k-$dependent boundary-value problem \eqref{Helmholtz_Eq_2} -- \eqref{Helmholtz_BC_2} with  spatial mesh size $h = (2^{16} + 1)^{-1}$. This is done for sample points $\by$ chosen on the   grid  formed as the tensor product of the 1d Gauss-Legendre rule with $50$ Gauss points in each of the $d$  dimensions. This is an expensive method, but it provides a very accurate  $\mathbb{E}[u(1)]$, and is done only once to allow us to study errors.   The absolute error
  $\vert \mathbb{E}^{k,r}[\wu^1(1)] - \mathbb{E}[u(1)]\vert$ is shown in Table \ref{tab:absolute error of UQ problem where d is 4}.
  To verify the reliability of these errors we have repeated the experiment again with $h = (2^{17} + 1)^{-1}$ and found that the absolute differences between the reference solutions computed by $h = (2^{16} + 1)^{-1}$ and $h = (2^{17} + 1)^{-1}$ are smaller than the errors shown in Table \ref{tab:absolute error of UQ problem where d is 4}.

  Recall that the method we are studying has an error with respect to $k$  (due to the asymptotic approximation) and with respect to $r$ (from the sparse grid approximation). Hence we see convergence as both $k$ and $r$ increase by reading diagonally across the table, e.g., starting from   $r=7$ and $k=8$ we see the sequence: $5.83(-3),  \ {2.79(-5)}, \ {5.79(-6)}, \ {3.65(-7)}$, and similarly for other diagonals.

Since, for  $r = 12$ there is not much  error in the oscillatory integrals, we see  decay of the error as $k$ increases. For small $r$,  on the other hand, the rows of Table  \ref{tab:absolute error of UQ problem where d is 4} do not exhibit steady decay with respect to $k$ due to the error in the oscillatory integrals.

The computation of the exact reference value used in Table \ref{tab:absolute error of UQ problem where d is 4} is costly and not feasible for higher dimensions or wavenumbers. 
Instead, in  Table \ref{tab:7}  we study the error proxy
\begin{align} \label{prox2}
\vert \mathbb{E}^{k,r}[\widetilde{u}^1(1)] - \mathbb{E}^{k,r+4}[\widetilde{u}^1(1)] \vert
\end{align}
for $d=4$ and  higher values of $k$. Table \ref{tab:absolute error of UQ problem where d is 4} tells us that we should use this proxy cautiously,  since (for example)  computing this quantity for the column corresponding to $k=8$ will give values uniformly of order $10^{-6}$ where the true error is much larger.
However   reading Table \ref{tab:7} diagonally we still see quite  convincing  convergence of this proxy to zero as $k,r$ both increase simultaneously, although
the convergence is not always monotonic.



\begin{table}[H]
\begin{center}
\begin{tabular}{l|lll}
$r$   & $k = 32$ & $k = 64$ & $k = 128$  \\
\hline
4  & 2.17 (-3) & 5.35 (-4) & 4.04 (-5) \\
5 &   8.77 (-4) & 8.09 (-4) & 5.43 (-5)  \\
6 &   4.48 (-5) & 3.19 (-4) & 1.02 (-4)  \\
7 &   2.22 (-6) & 1.13 (-4) & 5.19 (-5)  \\
8 &  1.30 (-7) & 1.50 (-6) & 5.84 (-5)  \\
\hline
\end{tabular}
\end{center}
\caption{\label{tab:7} Values of the `Error proxy' \eqref{prox2} for $d = 4$ for various $r$ and $k$}
\end{table}

In Table \ref{tab:6} 
we study the `error proxy':
\begin{align} \label{prox1}
\vert \mathbb{E}^{k,r}[\widetilde{u}^1(1)] - \mathbb{E}^{k,r+2}[\widetilde{u}^1(1)] \vert
\end{align}
for the case $d = 6$, and observe a similar diagonal behaviour.


\begin{table}[H]
\begin{center}
\begin{tabular}{l|lll}
$r$ &  $k = 32$ & $k = 64$ & $k = 128$  \\
\hline
4 &  2.21 (-3) & 2.05 (-4) & 1.25 (-4)  \\
5 &  8.89 (-4) & 9.17 (-4) & 4.03 (-5)  \\
6 & 4.20 (-5) & 3.31 (-4) & 1.54 (-4)  \\
7 & 2.12 (-6) & 1.08 (-4) & 5.13 (-5)  \\
8 &  1.20 (-7) & 1.71 (-6) & 5.63 (-5)  \\
\hline
\end{tabular}
\end{center}
\caption{\label{tab:6} Values of the `Error proxy' \eqref{prox1} for $d = 6$ for various $r$ and $k$}
\end{table}

%

\paragraph{Example 8 - The dimension adaptive algorithm.}

Finally we consider the dimension adaptive method for the UQ problem. In this case, for any given $k$, $\mathbb{E}[u(1)]$ is computed by approximating the three integrals \eqref{intmu}, \eqref{intnu} and \eqref{intn},  using the dimension adaptive method. To do this we introduce a  tolerance parameter $\tau$. {Since the formulae \eqref{alphas1}, \eqref{alphas2} show that the amplitude of $\nu_0$ appearing in the integral \eqref{intnu}
   decays with $\mathcal{O}(1/k)$, and (recall Example 6 above), the adaptive method aims to control the \blue{normalized error} (and not the absolute error) in the approximate integral, we use a smaller tolerance $\tau$ for integrals  \eqref{intmu} and \eqref{intn} and larger tolerance $k \tau$ for integral \eqref{intnu}, {and the resulting approximation of $\mathbb{E}[\wu^1(x)]$ is denoted by $\mathbb{E}^{k, \tau}[\wu^1(x)]$}.} 
For $d = 6, 8, 10$, we display values of the error proxy:
\begin{align} \label{prox3}
\vert \mathbb{E}^{k,\tau}[\wu^1(1)] - \mathbb{E}^{k, \tau/4}[\wu^1(1)] \vert.
\end{align}
We also let $N_{\widetilde{\mu}}, N_{\widetilde{\nu}}, N_{\widetilde{f}}$ denote, respectively,
the number of grid points in the adaptive sparse grids used for
computing integrals \eqref{intmu}, \eqref{intnu} and \eqref{intn} and set $N_{tot} = N_{\widetilde{\mu}} + N_{\widetilde{\nu}} +N_{\widetilde{f}}$.



\begin{table}[H]
\begin{center}
\begin{tabular}{l|llll}
$\tau$ & $k = 32$ & $k = 64$ & $k = 128$ & $k = 256$ \\
\hline
0.01 & 3.84 (-5) & 1.13 (-4) & 2.77 (-5) & 2.78 (-7) \\
0.005 & 6.37 (-5) & 1.86 (-4) & 2.57 (-5) & 7.76 (-6) \\
0.0025 & 7.01 (-5) & 5.98 (-5) & 3.12 (-5) & 3.19 (-7) \\
0.00125 & 2.64 (-4) & 1.81 (-6) & 6.63 (-5) & 7.19 (-6) \\
\hline
\hline
0.01 & (21, 27, 13, 61) & (21, 75, 13, 109) & (13, 15, 13, 41) & (13, 13, 13, 39) \\
0.005 & (49, 43, 13, 105) & (21, 75, 13, 109) & (13, 21, 13, 47) & (21, 13, 13, 47) \\
0.0025 & (53, 53, 13, 119) & (81, 75, 13, 169) & (13, 149, 13, 175) & (21, 15, 13, 49) \\
0.00125 & (53, 77, 13, 143) & (141, 81, 13, 235) & (21, 149, 13, 183) & (39, 15, 13, 67) \\
0.000625 & (53, 77, 13, 143) & (149, 141, 13, 303) & (31, 157, 13, 201) & (55, 15, 13, 83) \\
0.0003125 & (91, 77, 13, 181) & (219, 141, 13, 373) & (167, 277, 13, 457) & (55, 45, 13, 113) \\
\hline
\end{tabular}
\end{center}
\caption{\label{tab: modified adaptive grids of UQ problem where d is 6} Results of the dimension adaptive method for $d = 6$ for various $\tau$ and $k$. Top panel: Values of the `Error proxy' \eqref{prox3}. Bottom panel: $(N_{\widetilde{\mu}}, N_{\widetilde{\nu}}, N_{\widetilde{f}}, N_{tot})$}
\end{table}

\begin{table}[H]
\begin{center}
\begin{tabular}{l|llll}
$\tau$ & $k = 32$ & $k = 64$ & $k = 128$ & $k = 256$ \\
\hline
0.01 & 3.84 (-5) & 1.13 (-4) & 2.77 (-5) & 2.78 (-7) \\
0.005 & 6.37 (-5) & 1.86 (-4) & 2.57 (-5) & 7.76 (-6) \\
0.0025 & 7.01 (-5) & 5.98 (-5) & 3.12 (-5) & 3.19 (-7) \\
0.00125 & 2.64 (-4) & 1.81 (-6) & 6.63 (-5) & 7.19 (-6) \\
\hline
\hline
0.01 & (25, 31, 17, 73) & (25, 79, 17, 121) & (17, 19, 17, 53) & (17, 17, 17, 51) \\
0.005 & (53, 47, 17, 117) & (25, 79, 17, 121) & (17, 25, 17, 59) & (25, 17, 17, 59) \\
0.0025 & (57, 57, 17, 131) & (85, 79, 17, 181) & (17, 153, 17, 187) & (25, 19, 17, 61) \\
0.00125 & (57, 81, 17, 155) & (145, 85, 17, 247) & (25, 153, 17, 195) & (43, 19, 17, 79) \\
0.000625 & (57, 81, 17, 155) & (153, 145, 17, 315) & (35, 161, 17, 213) & (59, 19, 17, 95) \\
0.0003125 & (95, 81, 17, 193) & (223, 145, 17, 385) & (171, 281, 17, 469) & (59, 49, 17, 125) \\
\hline
\end{tabular}
\end{center}
\caption{\label{tab: modified adaptive grids of UQ problem where d is 8} Results of the dimension adaptive method for $d = 8$ for various $\tau$ and $k$. Top panel: Values of the `Error proxy' \eqref{prox3}.  Bottom panel: $(N_{\widetilde{\mu}}, N_{\widetilde{\nu}}, N_{\widetilde{f}}, N_{tot})$ }
\end{table}

\begin{table}[H]
\begin{center}
\begin{tabular}{l|llll}
$\tau$ & $k = 32$ & $k = 64$ & $k = 128$ & $k = 256$ \\
\hline
0.01 & 3.84 (-5) & 1.13 (-4) & 2.77 (-5) & 2.78 (-7) \\
0.005 & 6.37 (-5) & 1.86 (-4) & 2.57 (-5) & 7.76 (-6) \\
0.0025 & 7.01 (-5) & 5.98 (-5) & 3.12 (-5) & 3.19 (-7) \\
0.00125 & 2.64 (-4) & 1.81 (-6) & 6.63 (-5) & 7.19 (-6) \\
\hline
\hline
0.01 & (29, 35, 21, 85) & (29, 83, 21, 133) & (21, 23, 21, 65) & (21, 21, 21, 63) \\
0.005 & (57, 51, 21, 129) & (29, 83, 21, 133) & (21, 29, 21, 71) & (29, 21, 21, 71) \\
0.0025 & (61, 61, 21, 143) & (89, 83, 21, 193) & (21, 157, 21, 199) & (29, 23, 21, 73) \\
0.00125 & (61, 85, 21, 167) & (149, 89, 21, 259) & (29, 157, 21, 207) & (47, 23, 21, 91) \\
0.000625 & (61, 85, 21, 167) & (157, 149, 21, 327) & (39, 165, 21, 225) & (63, 23, 21, 107) \\
0.0003125 & (99, 85, 21, 205) & (227, 149, 21, 397) & (175, 285, 21, 481) & (63, 53, 21, 137) \\
\hline
\end{tabular}
\end{center}
\caption{\label{tab: modified adaptive grids of UQ problem where d is 10} Results of the dimension adaptive method for $d = 10$ for various $\tau$ and $k$. Top panel: Values of the `Error proxy' \eqref{prox3}. Bottom panel: $(N_{\widetilde{\mu}}, N_{\widetilde{\nu}}, N_{\widetilde{f}}, N_{tot})$ }
\end{table}

In Tables \ref{tab: modified adaptive grids of UQ problem where d is 6} -- \ref{tab: modified adaptive grids of UQ problem where d is 10} we again observe (mostly) diagonal convergence as $\tau \rightarrow 0$ and $k \rightarrow \infty$. For fixed $k$ and $\tau$ we see only very modest growth in the amount of work as the dimension increases. In fact for $\tau = 0.00125$ and any fixed $k$ a linear least squares fit on the data here  suggests the number of function evaluations grows at most like $\mathcal{O}(d^{0.1})$ as $d$ increases.
For fixed $\tau$ and $d$, we see some initial growth of the work as $k$ increases, but this seems to reduce substantially as $k$ gets higher.

\section{Conclusion}
\label{sec:conclusion}
\blue{We have proposed and analysed a Filon-Clenshaw-Curtis-Smolyak sparse grid-based rule for $d-$dimensional oscillatory integrals $ \int_{[{-1},1]^d} f(\by) \exp(\ri k \ba . \by) \rd \by $,
which is accurate for all $k >0$, where  $\ba \in \mathbb{R}^d$ may have components with large,  small or  vanishing modulus.
We give novel upper bounds for the absolute error. These take the form of a Smolyak-type
error estimate multiplied by the factor  $k^{-\widetilde{d}}$, where $\widetilde{d}$ denotes the number of oscillatory dimensions. The Smolyak estimate is explicit in $N$ (the number of function evaluations), the maximum level $r$ of the  sparse grids and the regularity of $f$ in spaces of functions with continuous mixed derivatives,
The estimates imply that the normalized error (defined here to be the absolute error multiplied by $k^{\widetilde{d}}$) converges quickly  as $N$ grows, (assuming  enough regularity of $f$) and is bounded with respect to $k$. On the other hand if all components of $\ba$ are non-vanishing,  we also show that the normalized error decays with order $\mathcal{O}(k^{-1})$,  as $k \rightarrow \infty$.}

\blue{We apply the rule to the forward problem of UQ for a one-space-dimensional Helmholtz problem with a random refractive index $n$, depending in an affine way on
$d$ i.i.d. uniform random variables. An asymptotic approximation  shows,
that for large $k$,  expectations of linear functionals of the solution can be formulated as a sum of multidimensional oscillatory integrals which are computed using  the FCCS rule. Numerical experiments show that this yields a UQ algorithm with increasing accuracy as $k$ increases.}

\blue{There are a number of interesting directions for possible future work. It seems feasible to extend to the case of additively separable  non-linear phase, where $\ba \cdot \by$ is replaced by $g(\by) = g_1(y_1) + \ldots + g_d(y_d)$, and each $g_i$ may have stationary points or singularities.  Then one could consider  incorporating  methods for 1D oscillatory integrals with nonlinear phase (such as  \cite{GrahamDominguez:2013, olver2007moment, DeHuIs:18, MaIsPe:22}) in the dimensions with the stationarity. More general nonlinear phases with stationary points would present a bigger challenge. It would be interesting, also, to investigate whether the asymptotic method for the 1D Helmholtz problem could be replaced by a practical  `hybrid' method, based on a solution ansatz like  \eqref{app1}, but  for which solutions are accurate for all ranges of $k$. The extension of the asymptotic method to higher physical space dimension represents a particular challenge, since the current method makes use of the fact that in 1D waves can only move from left-to-right or right-to-left along the domain.  On the other hand the 1D in space model is still interesting in the UQ context; in fact  this  is the first time that the  oscillatory behaviour of Helmholtz solutions with respect to random perturbations of the refractive index has been explicitly displayed and exploited. }

\section*{Acknowledgement}
\noindent
The research of I.G. Graham is supported by UK EPSRC grant EP/S003975/1. The research of Z. Zhang is supported by Hong Kong RGC grants (Projects 17300318 and 17307921), National Natural Science Foundation of China (Project 12171406), and Seed Funding for Strategic Interdisciplinary Research Scheme 2021/22 (HKU). The computations were performed using research computing facilities offered by Information Technology Services, the University of Hong Kong.

We wish to thank Prof V\'{i}ctor Dom\'{i}nguez (Public University of Navarra) for sharing with us his codes for implementing the 1D  Filon-Clenshaw-Curtis rules.

\blue{We also thank the anonymous referees for their insightful  comments on the original version of this paper.}

\bibliographystyle{siam}
\bibliography{ZWpaper_Zhizhang}



\end{document}